\DeclareMathOperator\supp{\text{supp}}
\newcommand{\sihao}{\fontsize{14pt}{\baselineskip}\selectfont}
\newtheorem{theorem}{Theorem}[section]
\newtheorem{lemma}[theorem]{Lemma}
\newtheorem{corollary}[theorem]{Corollary}
\theoremstyle{definition}
\newtheorem{definition}[theorem]{Definition}
\newtheorem{remark}[theorem]{Remark}
\numberwithin{equation}{section}
\begin{document}

\title [Non-isospectral two-component cubic Camassa-Holm system]{\sihao{ {\rmfamily Global-in-time solvability and blow-up for a non-isospectral two-component cubic Camassa-Holm system in a critical Besov space}}}

\author{Lei Zhang}
\address{School of Mathematics and Statistics, Hubei Key Laboratory of Engineering Modeling  and Scientific Computing, Huazhong University of Science and Technology  Wuhan 430074, Hubei, P.R. China.}
\email{lei\_zhang@hust.edu.cn (L. Zhang)}

\author{Zhijun Qiao}
\address{School of Mathematical and Statistical Sciences, The University of Texas Rio Grande Valley, Edinburg, TX78539, USA}
\email{zhijun.qiao@utrgv.edu (Z. Qiao)}

\keywords{Non-isospectral two-component cubic Camassa-Holm system; Global solutions; Critical Besov spaces; Blow-up criteria.}

\date{\today}

\begin{abstract}
In this paper, we prove the global Hadamard well-posedness of strong solutions to a non-isospectral two-component  cubic Camassa-Holm system in the critical Besov space $B_{2,1}^{\frac{1}{2}}(\mathbb{T})$. %independent of %without structural condition on
%initial data.
Our results shows %The novelty of the result is
that in comparison with %contrast to
the well-known work for classic Camassa-Holm-type equations, the existence of global solution only %strongly
relies on the $L^1$-integrability of the variable coefficients $\alpha(t)$ and $\gamma(t)$, but nothing to do with %on
the shape or smoothness of the initial data.
The key ingredient of the proof hinges on the careful analysis of the mutual effect among %between
two component forms, the uniform bound of approximate solutions, and several crucial estimates of cubic nonlinearities in low-regularity Besov spaces via the Littlewood-Paley decomposition theory. {%\color{red}
A reduced case in our results yields % an application, we obtain
the global existence of solutions %results
in a Besov space for two kinds of well-known isospectral peakon system with weakly dissipative terms.} Moreover, we derive two kinds of precise blow-up criteria for a strong solution in both critical and non-critical Besov spaces, as well as %and also
providing specific characterization for the lower bound of the blow-up time, which implies the global existence with additional conditions on the time-dependent parameters $\alpha(t)$ an $\gamma(t)$.
\end{abstract}

\maketitle
%\newpage
%\tableofcontents

%%%%%%%%%%%%%%%%%%%%%%%%%%%%%%%%%%%%%%%%%%%%%%%%%%
\section{Introduction and main results}
%%%%%%%%%%%%%%%%%%%%%%%%%%%%%%%%%%%%%%%%%%%%%%%%%%

The Camassa-Holm (CH) equation
 \begin{equation}\label{CH}
m_t+um_x+2u_xm=0,\quad m=u-u_{xx}
\end{equation}
was derived by Camassa and Holm \cite{1} by approximating directly in the Hamiltonian for Euler equations in the shallow water regime, which has attracted much attention among the communities of the nonlinear systems in recent years.  It was proposed as a model for the unidirectional propagation of the shallow water waves over a flat bottom \cite{2,3}, where $u(t, x)$ stands for the fluid velocity at time $t$ in the spatial $x$ direction. The CH equation is completely integrable with a Bi-Hamiltonian structure and has an infinite number of conservation laws \cite{1,5}. Geometrically, the CH equation describes the geodesic flows on the diffeomorphism group of the unit torus under right-invariant $H^1$ metric \cite{22,Kolev25}
% \cite{22,K25}
%23,24,25}
and has algebro-geometric solutions on a symplectic submanifold \cite{445}. One of the remarkable features for the CH equation is that it has the peaked soliton (peakon) solutions in the form of $u(x,t)=c e^{-|x-ct|}$, $c\in \mathbb{R}$, which retain their shapes after interacting with other peakons. Another notable property %remarkable feature
of the CH equation is that the blow-up phenomena occurs only in the form of breaking waves, that is, the wave profile remains bounded while its slope becomes unbounded in a finite time \cite{28,18,21}.
%\cite{28,%17,18,%19,21}.

The amazing scenario  %facts great interest
in the CH equation has enhanced %inspired
the search for various CH-type equations with high order nonlinearity. One of the most concerned is the following cubic CH equation, i.e., the FORQ/MCH equation %\cite{Hone08,Bies12,36,41}
\begin{equation}\label{FORQ/MCH}
m_t+\left[(u^2-u_x^2)m\right]_x=0,\quad m=u-u_{xx},
\end{equation}
which was investigated %studied %derived
independently by Fokas \cite{446}, Olver and Rosenau \cite{13} and Qiao
%by employing the tri-Hamiltonian duality approach to the Bi-Hamiltonian representation of the modified KdV equation. Subsequently, the integrability and structure of solutions to the FORQ/MCH equation was discussed by Qiao
\cite{35}.  It is shown that the FORQ/MCH equation \eqref{FORQ/MCH} has symmetry properties and hodograph transformation related to other integrable equations  \cite{12} and a Bi-Hamiltonian structure with conservation laws \cite{13} and is completely integrable in the sense of Lax pair \cite{35,39}. Hence, in spirit, it can be solved by the inverse scattering transform method.

As a natural extension of the cubic FORQ/MCH equation \eqref{FORQ/MCH}, Song, Qu and Qiao \cite{52} proposed the following two-component cubic
Camassa-Holm system (called the SQQ system for short):
\begin{equation} \label{SQQ}
\begin{cases}
m_t+[(u-u_x)(v+v_x)m]_x=0,\\
n_t+[(u-u_x)(v+v_x)n]_x=0,\\
m=u-u_{xx},\quad n=v-v_{xx},
\end{cases}	
\end{equation}
which is reduced %degenerates
to the FORQ/MCH equation \eqref{FORQ/MCH} when $u=v$. The SQQ system \eqref{SQQ} is proven to possess infinitely many conservation laws,  Bi-Hamiltonian structure,  Lax formulation and multi-peakon solutions. %Its  bi-Hamiltonian structure was derived by Tian and Liu \cite{tian2013tri} and its interlacing peakons were considered in \cite{chang2016multipeakons} very recently.
Moreover, the SQQ system \eqref{SQQ}  is geometrically integrable since it describes the pseudospherical surfaces.
%???
%Notice that t
The aforementioned equations belong to the isospectral category because %, where
 the spectral parameter in the Lax pair is independent of the time variable. Motivated by the work done by Beals, Sattinger and Szmigielski \cite{beals1999multi,beals2000multipeakons}, Chang, Chen and Hu \cite{chang2014generalized} recently generalized the CH equation to a non-isospectral CH equation through %by using
 the classic determinant technique. It is shown that the non-isospectral CH equation still remains to be integrable in the sense of %with
 a non-isospectral Lax pair. Moreover, it admits multi-peakon solutions similar to the CH equation. Subsequently, by using the similar approach %algebraic method
 in \cite{chang2014generalized}, Chang, Hu and Li \cite{60} studied %successfully derived
 the following generalized non-isospectral two-component  cubic Camassa-Holm system (called the 2NSQQ system for short):
\begin{equation} \label{2NSQQ}
\begin{cases}
m_t+(\rho m)_x=0,\\
n_t+(\rho n)_x=0,\\
\rho_x=(\alpha+\gamma)m(v+v_x)-\alpha n(u-u_x),\\
m=u-u_{xx},\quad n=v-v_{xx},
\end{cases}	
\end{equation}
where $\alpha, \gamma $ are two arbitrary time-dependent parameters. As a special case, if one takes $\alpha\equiv1$ and $\gamma\equiv 0$, then the system \eqref{2NSQQ} reduces to the SQQ equation \eqref{SQQ}. Furthermore, if one sets $u=v$ in \eqref{2NSQQ}, then the system \eqref{2NSQQ} reduces to the FORQ/MCH equation \eqref{FORQ/MCH} via the SQQ system \eqref{SQQ}. Similar to the non-isospectral CH equation, %obtained in \cite{chang2014generalized},
the 2NSQQ system \eqref{2NSQQ} also has multi-peakon solutions. Moreover, it is integrable in the sense of a non-isospectral Lax pair, namely, the system \eqref{2NSQQ} can be obtained by the compatibility condition of the following two linear systems
\begin{equation*}
\begin{array}{l}
\left(                 %左括号
  \begin{array}{ccc}   %该矩阵一共3列，每一列都居中放置
    \psi_1 \cr  %第一行元素
    \psi_2 \cr  %第二行元素
  \end{array}
\right)_x  %右括号
\end{array}=\frac{1}{2}U\begin{array}{l}
\left(                 %左括号
  \begin{array}{ccc}   %该矩阵一共3列，每一列都居中放置
    \psi_1 \cr  %第一行元素
    \psi_2 \cr  %第二行元素
  \end{array}
\right),~  %右括号
\end{array}\quad \begin{array}{l}
\left(                 %左括号
  \begin{array}{ccc}   %该矩阵一共3列，每一列都居中放置
    \psi_1 \cr  %第一行元素
    \psi_2 \cr  %第二行元素
  \end{array}
\right)_t  %右括号
\end{array}=\frac{1}{2}V\begin{array}{l}
\left(                 %左括号
  \begin{array}{ccc}   %该矩阵一共3列，每一列都居中放置
    \psi_1 \cr  %第一行元素
    \psi_2 \cr  %第二行元素
  \end{array}
\right), %右括号
\end{array}
\end{equation*}
where the two $2\times 2$ matrixes $U,V$ are given by
\begin{equation}
\begin{array}{l}
U=\left(                 %左括号
  \begin{array}{ccc}   %该矩阵一共3列，每一列都居中放置
    -1 & \lambda m\cr  %第一行元素
    -\lambda n & 1\cr  %第二行元素
  \end{array}
\right), \nonumber         %右括号
\end{array}
\end{equation}
and
\begin{equation}
\begin{array}{l}
 V=\left(                 %左括号
  \begin{array}{ccc}   %该矩阵一共3列，每一列都居中放置
    (2\alpha+\gamma)\lambda^{-2}+\rho & -2\alpha\lambda^{-1}(u-u_x)-\lambda m\rho\cr  %第一行元素
    2(\alpha+\gamma)\lambda^{-1}(v+v_x)+\lambda n \rho & -(2\alpha+\gamma)\lambda^{-2}-\rho\cr  % 第二行元素
  \end{array}
\right),\nonumber         %右括号
\end{array}
\end{equation}
and the spectral parameter $\lambda (t) $ satisfies the following differential equation
\begin{equation*}
\begin{split}
\dot{\lambda} (t) =\frac{\gamma(t)}{\lambda (t)}.
\end{split}
\end{equation*}

From the intrinsic structure of the system \eqref{2NSQQ}, one knows that
%it is fascinating that
the transport terms in the system  are no longer local since the first two equations can be presented by the integral of unknowns via the third equation in \eqref{2NSQQ}. The other important feature different from the classic CH-type equations is that the coefficients of the 2NSQQ system \eqref{2NSQQ} are time-dependent parameters. It is natural to ask if %whether
these features have serious effects on the development of solutions to the system \eqref{2NSQQ}, %or not,
such as the well-posedness, blow-up phenomena, and long time asymptotic behavior. Minor part of those questions was answered in our very recent paper \cite{zhang2020periodic}, where %partially answered , in which
we established the local-in-time well-posedness to the 2NSQQ system in the non-critical Besov spaces $B_{2,r}^s(\mathbb{T}) $ with $s>1/2$, $1\leq r\leq\infty$, and some specific blow-up criteria are also addressed %derived
with appropriate conditions on the initial data.%profiles.

The goal of the present paper is two-fold. First, we want to understand how the time-dependent variable coefficients and nonlocal structure of transport terms can affect the well-posedness of the 2NSQQ system \eqref{2NSQQ} in the critical Besov space $B_{2,1}^{1/2}(\mathbb{T})$. For the case of the CH equation, it is shown in \cite{constantin1998well,constantin1998global} that the existence of global solution is closely related to the shape of the initial data, but not the smoothness or size, such as the sign condition for the initial momentum density. The similar phenomena has also been studied %discovered
for the Degasperis-Procesi (DP) equation \cite{liu2006global}, the Novikov equation \cite{wuyin2012well}, the two-component Camassa-Holm system %\cite{%gui2010global,
\cite{guan2010global} and so on. For the case of the FORQ/MCH equation \eqref{FORQ/MCH} with cubic nonlinearity, it is proved that even if the initial momentum density does not change sign, the solutions to \eqref{FORQ/MCH} can still blow up in a finite time %, %in contrast to the case of CH equation \eqref{CH}
 \cite{fuguiliuqu2013cauchy}. Thereby few global existence results for the equations \eqref{FORQ/MCH} and \eqref{SQQ} already existed  %are available
 in the literature. Unlike the isospectral CH-type equations as we mentioned earlier in this paper, the surprising but amazing thing is: for any given initial data, we can establish the global Hadamard well-posedness for the non-isospectral 2NSQQ system \eqref{2NSQQ} in the critical Besov space. Instead of assuming structural conditions on the initial data, only is the matter of the $L^1$-integrability of the time-dependent parameters $\alpha$ and $\gamma$ %is
  required to promote the existence of global solution. Within our best knowledge, this phenomena does not appear in any isospectral CH-type equations.

To state %rt the statement of
our main results, let us  define the pseudo-differential operator $\partial_x^{-1}$ as mentioned in \cite{zhang2020periodic}:
\begin{equation*}
\begin{split}
\partial_x^{-1}f(x)\doteq \int_0^xf(y)dy-x\int_\mathbb{T}f(y)dy-\int_\mathbb{T} \left[\int_0^xf(y)dy-x\int_\mathbb{T}f(y)dy\right]dx.
\end{split}
\end{equation*}
The representation of the operator $\partial_x^{-1}$ in the Fourier domain is formulated by
\begin{equation*}
\widehat{\partial_x^{-1}f}(n)=
\begin{cases}
\frac{1}{in}\widehat{f}(n), &\mbox{if}~n\neq0,\\
0, &\mbox{if}~n=0.
\end{cases}	
\end{equation*}
In the context, %sequel,
the notation $\overline{f}(t)=\int_\mathbb{T}f(t,x)dx$ will frequently be used.
Then, the periodic Cauchy problem for the 2NSQQ system \eqref{2NSQQ} can be reformulated into the following nonlocal transport equations on $\mathbb{T}\doteq\mathbb{R}/\mathbb{Z}$:
\begin{equation}\label{P2NSQQ}
\begin{cases}
m_t+\rho m_x=-m\left(\psi(t,x)-\overline{\psi}(t)\right),&~~ t>0,~x\in\mathbb{T},\\

n_t+\rho n_x=-n\left(\psi(t,x)-\overline{\psi}(t)\right),&~~ t>0,~x\in\mathbb{T},\\

\rho=\partial_x^{-1}\psi,&~~  t>0,~x\in\mathbb{T},\\

m(0,x)=m_0(x),\quad n(0,x)=n_0(x),&~~x\in \mathbb{T},
\end{cases}
\end{equation}
with
\begin{equation*}
\begin{split}
\psi(t,x)\doteq(\alpha+\gamma)(v+v_x)m-\alpha(u-u_x)n.
\end{split}
\end{equation*}
%???\\

\begin{definition}
For any $s\in \mathbb{R}$, $1\leq r \leq \infty$, let us define %we introduce
the following three spaces
$$
X_{s,r}\doteq B_{2,r}^{s}(\mathbb{T})\times B_{2,r}^{s}(\mathbb{T}), \quad E_{2,1}^s(\infty)\doteq \bigcap_{T>0} E_{2,1}^s(T),
$$
and
$$
E_{2,1}^s(T)\doteq C([0,T];B_{2,1}^{s}(\mathbb{T})\times B_{2,1}^{s}(\mathbb{T}))\bigcap C^1([0,T];B_{2,1}^{s-1}(\mathbb{T})\times B_{2,1}^{s-1}(\mathbb{T})),
$$
for any finite $T>0$.
\end{definition}

The result of the local and global well-posedness for the periodic Cauchy problem \eqref{P2NSQQ} in the critical Besov spaces may now be enunciated by the following theorem.

\begin{theorem} [Hadamard Well-posedness] \label{theorem1}
Assume the initial data $(m_0, n_0)\in  X_{1/2,1}$.
\begin{itemize}[leftmargin=0.75cm]
\item [(1)] (Local result) If the time-dependent parameters $\alpha(\cdot),\gamma(\cdot) \in L^1_{loc}([0,\infty);\mathbb{R})$, then there is a finite time $T^*>0$ with the following condition %satisfying
\begin{equation*}
\begin{split}
\int_0^{T^*}(|\alpha(t)|+|\gamma(t)|)dt \leq \frac{\ln 2}{12C^3\hbar^2\Big(\|m_0\|_{B^{1/2}_{2,1}}+\|n_0\|_{B^{1/2}_{2,1}}\Big)}\doteq K(m_0,n_0),
\end{split}
\end{equation*}
where $\hbar(\cdot):  \mathbb{R}^+\mapsto \mathbb{R}^+$ is a modulus of continuity
\begin{equation*}
\begin{split}
\hbar(x)=& \left(x+8 C^3 x^3 \int_0^{T^*}(|\alpha(t)|+|\gamma(t)|)dt\right)\exp{\left\{4 C^3 x^2\int_0^{T^*}(|\alpha(t)|+|\gamma(t)|)dt\right\}},
\end{split}
\end{equation*}
such that the Cauchy problem \eqref{P2NSQQ} has a unique solution $(m,n)\in E_{2,1}^{1/2}(T^*)$,
and the data-to-solution map $\Lambda (\cdot):(m_0,n_0)\rightarrow (m,n)$ is H\"{o}lder continuous from $X_{1/2,1}$ into $E_{2,1}^{1/2}(T^*)$.

\item [(2)] (Global result) If the time-dependent parameters $\alpha(\cdot),\gamma (\cdot) \in L^1([0,\infty);\mathbb{R})$ satisfy the following bound
$$
\int_0^{\infty}(|\alpha(t)|+|\gamma(t)|)dt\leq \widetilde{K}(m_0,n_0),
$$
where $\widetilde{K}(m_0,n_0)$ is defined by replacing $\hbar(x)$ of $K(m_0,n_0)$ with
\begin{equation*}
\begin{split}
\widetilde{\hbar}(x)=& \left(x+8 C^3 x^3 \int_0^{\infty}(|\alpha(t)|+|\gamma(t)|)dt\right)\exp{\left\{4 C^3 x^2\int_0^{\infty}(|\alpha(t)|+|\gamma(t)|)dt\right\}},
\end{split}
\end{equation*}
for all $x\geq0$, then there exists a global solution $(m,n)$ to the Cauchy problem \eqref{P2NSQQ}.
\end{itemize}
\end{theorem}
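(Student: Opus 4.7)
The plan is to follow a Friedrichs-type iteration tailored to the critical space $B_{2,1}^{1/2}(\mathbb{T})$, organised as: (i) construction of approximate solutions via linear transport problems, (ii) a uniform a priori bound yielding exactly the modulus $\hbar$ of the theorem, (iii) Cauchy property in a lower-regularity space and upgrade to $E_{2,1}^{1/2}(T^\ast)$, (iv) continuity of the data-to-solution map, and (v) a continuation argument for the global part.

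First I would set $(m^{0},n^{0})\equiv(m_{0},n_{0})$ and, given $(m^{k},n^{k})$, recover $u^{k}=(1-\partial_{x}^{2})^{-1}m^{k}$, $v^{k}=(1-\partial_{x}^{2})^{-1}n^{k}$, and
\begin{equation*}
\psi^{k}=(\alpha+\gamma)(v^{k}+v^{k}_{x})m^{k}-\alpha(u^{k}-u^{k}_{x})n^{k},\qquad \rho^{k}=\partial_{x}^{-1}\psi^{k},
\end{equation*}
then solve the linear transport system
\begin{equation*}
\partial_{t}m^{k+1}+\rho^{k}\partial_{x}m^{k+1}=-m^{k+1}(\psi^{k}-\overline{\psi^{k}}),\qquad \partial_{t}n^{k+1}+\rho^{k}\partial_{x}n^{k+1}=-n^{k+1}(\psi^{k}-\overline{\psi^{k}}),
\end{equation*}
with data $(m_{0},n_{0})$. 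Existence at each step is standard once the transport coefficient and source sit in suitable Besov classes.

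The uniform bound is the heart of the argument and leans on three critical-Besov building blocks, all valid in dimension one: the algebra property and embedding $B_{2,1}^{1/2}(\mathbb{T})\hookrightarrow L^{\infty}$, the Danchin linear transport estimate at $s=1/2$, and the regularisation gain $\|\partial_{x}^{-1}f\|_{B_{2,1}^{1/2}}\lesssim \|f\|_{B_{2,1}^{-1/2}}$ for mean-zero adjustments. Since $u^{k}-u^{k}_{x},\,v^{k}+v^{k}_{x}\in B_{2,1}^{3/2}$ when $m^{k},n^{k}\in B_{2,1}^{1/2}$, the product estimate gives $\|\psi^{k}\|_{B_{2,1}^{1/2}}\leq C(|\alpha|+|\gamma|)(\|m^{k}\|_{B_{2,1}^{1/2}}+\|n^{k}\|_{B_{2,1}^{1/2}})^{2}$ and $\|\partial_{x}\rho^{k}\|_{L^{\infty}}\lesssim \|\psi^{k}\|_{B_{2,1}^{1/2}}$. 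Writing $E^{k}(t):=\|m^{k}(t)\|_{B_{2,1}^{1/2}}+\|n^{k}(t)\|_{B_{2,1}^{1/2}}$, the transport estimate produces a Gr\"onwall-type bound
\begin{equation*}
E^{k+1}(t)\leq \Bigl(E(0)+8C^{3}\!\int_{0}^{t}(|\alpha|+|\gamma|)(E^{k})^{3}\,d\tau\Bigr)\exp\Bigl\{4C^{3}\!\int_{0}^{t}(|\alpha|+|\gamma|)(E^{k})^{2}\,d\tau\Bigr\},
\end{equation*}
which by induction keeps each $E^{k}(t)\leq \hbar(E(0))$ on $[0,T^{\ast}]$ provided the integral of $|\alpha|+|\gamma|$ there is no larger than $K(m_{0},n_{0})$; this is precisely the threshold stated in (1).

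Next I would compare successive iterates in the weaker norm $L^{\infty}_{T^{\ast}}B_{2,1}^{-1/2}$: writing transport equations for $(\delta m^{k},\delta n^{k})=(m^{k+1}-m^{k},n^{k+1}-n^{k})$ and invoking the critical transport estimate together with Bony paraproduct decomposition of $(\rho^{k}-\rho^{k-1})\partial_{x}m^{k}$, the uniform bound on each iterate turns the recursion into a geometric contraction for $T^{\ast}$ small. The limit $(m,n)$ lies in $L^{\infty}([0,T^{\ast}];B_{2,1}^{1/2})$; the Fatou property of $B_{2,1}^{1/2}$ and the equation itself upgrade this to $E_{2,1}^{1/2}(T^{\ast})$, while uniqueness follows from the same difference estimate. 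H\"older continuity of the data-to-solution map is then a Bona–Smith argument: mollify two initial data, control the smooth solutions at high regularity, the difference at regularity $-1/2$, and interpolate to land at $B_{2,1}^{1/2}$ with a controlled H\"older exponent. For part (2), monotonicity of $\widetilde{\hbar}$ lets me iterate: restarting from $(m(T^{\ast}),n(T^{\ast}))$, whose norm is dominated by $\widetilde{\hbar}(E(0))$, the admissibility condition $\int_{T^{\ast}}^{\infty}(|\alpha|+|\gamma|)\,d\tau\leq K(m(T^{\ast}),n(T^{\ast}))$ remains satisfied by the global assumption, so the extension intervals exhaust $[0,\infty)$ and yield a solution in $E_{2,1}^{1/2}(\infty)$.

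The main obstacle I anticipate is the critical bookkeeping at $s=1/2$: the standard commutator $[\Delta_{j},\rho^{k}\partial_{x}]$ produces a logarithmic loss that has to be absorbed in the $\ell^{1}$ summation against $\|\partial_{x}\rho^{k}\|_{B_{2,1}^{1/2}}$ rather than $L^{\infty}$, and the nonlocal, nonlinear coupling through $\rho=\partial_{x}^{-1}\psi$ together with the cubic source must be arranged so that the Gronwall step closes at cubic (rather than quartic) power of $E^{k}$, in order to reproduce the exact shape of $\hbar$. Getting these constants right is what converts the abstract well-posedness into the quantitative global bound of part~(2).
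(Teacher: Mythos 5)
Your overall architecture (Friedrichs iteration, uniform bound producing $\hbar$, convergence in a weaker norm, Fatou upgrade, difference estimate for uniqueness, interpolation for continuity) matches the paper's, but two steps as you describe them would not close. First, the convergence of the iterates: you assert that the difference estimates in $L^\infty_{T^*}B_{2,1}^{-1/2}$ give a \emph{geometric contraction}. At the critical index this is exactly what fails. The product law you would need, $\|fg\|_{B_{2,1}^{-1/2}}\lesssim\|f\|_{B_{2,1}^{-1/2}}\|g\|_{B_{2,1}^{1/2}}$, is false (the Moser estimate requires $s_1+s_2>0$, and here $s_1+s_2=0$); the endpoint bilinear estimate only lands in $B_{2,\infty}^{-1/2}$ while consuming the $B_{2,1}^{-1/2}$ norm of the other factor. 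Converting $B_{2,\infty}^{-1/2}$ back to $B_{2,1}^{-1/2}$ costs a logarithm, so the recursion for $\mathscr{D}_{k,j}$ has the form $\mathscr{D}_{k+1,j}\le C\bigl(2^{-k}+\int(|\alpha|+|\gamma|)\,\mathscr{D}_{k,j}\bigl(1+\log(4C\hbar(F_0)/\mathscr{D}_{k,j})\bigr)\bigr)$ rather than a linear one with a small constant. The paper closes this with the log-interpolation inequality and the Osgood lemma (and a separate duality/Littlewood--Paley pairing to handle the nonlocal mean terms $m_{k+j}\overline{\phi_{k,j}}$, which your sketch does not address at all); a plain contraction argument at $s=1/2$ with third index $1$ does not exist, which is precisely why the critical case is harder than the $s>1/2$ case.

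Second, your global argument by continuation does not close. Restarting at $T^*$ from data whose norm is only bounded by $2C\widetilde{\hbar}(F_0)\ge F_0$, the new admissibility threshold $K(m(T^*),n(T^*))$ is \emph{smaller} than $\widetilde{K}(m_0,n_0)$ (the threshold is decreasing in the size of the data), while the tail integral $\int_{T^*}^{\infty}(|\alpha|+|\gamma|)\,dt$ is only known to be $\le\widetilde{K}(m_0,n_0)$; so the condition "remains satisfied" is not implied by the hypothesis, and the bounds compound unfavourably at each restart. The paper instead proves the uniform bound $\sup_{t\in[0,\infty)}F_k(t)\le 2C\widetilde{\hbar}(F_0)$ for the \emph{entire approximation sequence on $[0,\infty)$ at once}, using $A(0,\infty)\le\ln 2/(12C^3\widetilde{\hbar}^2(F_0))$ in the same induction that gives the local bound; the global solution then comes from the same compactness/convergence argument, with no continuation step. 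A minor further inconsistency: your iteration places the zeroth-order term at level $k+1$ ($-m^{k+1}(\psi^k-\overline{\psi^k})$), which would yield a purely exponential bound without the additive cubic term, whereas the stated $\hbar$ (and your own displayed recursion) corresponds to the paper's choice of source $-m_k(\psi_k-\overline{\psi_k})$ at level $k$.
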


Regarding the above Theorem 1.2, we have several remarks listed below.
%on Theorem 1.2 are given in order.
%???\\
\begin{remark}
%Several remarks on Theorem 1.2 are given in order.
\begin{itemize}[leftmargin=0.62cm]
\item [(i)] The existence of global strong solution for isospectral CH-type equations has been established in several papers, such as Constantin \cite{constantin1998global,constantin1998well}, Liu \cite{liu2006global}, and Liu and Yin \cite{liuyin2006global}. %and Gui and Liu \cite{gui2010global}.
     %The common characteristic of t
     Those results share the common characteristic that the solutions globally exist %globally
     only when there are appropriate structural conditions imposed on the initial data. For example, in \cite{constantin1998well}, Constantin proposed the shape condition
    $y_0(x)=u_0(x)-\partial_xu_0(x)\geq 0$ for any $x\in \mathbb{T}$ and proved the global-in-time solvability of the CH equation in the Sobolev space $H^1(\mathbb{T})$.
    %For the two-component CH system, Gui and Liu \cite{gui2010global} proposed the condition $\inf_{x\in \mathbb{R}} \rho_0 (x)>0$ for the free surface component of the system, and obtained the global existence result in $H^s(\mathbb{T})\times H^{s-1}(\mathbb{T})$ for $s>3/2$.
     In contrast to the existing work done in the literature \cite{constantin1998global,constantin1998well, liu2006global, liuyin2006global}, our results in Theorem 1.1 show that the existence of global-in-time solution does not depends on the specific shape of the initial data, but on the $L^1$-integrability of the time-dependent parameters $\alpha(t)$ and $\gamma(t)$ in the 2NSQQ system. Within our %best
      knowledge, this phenomena is brand %seems to be
      new, which tells us %potentially implies
      that the well-posedness scenario of the non-isospectral CH system  \eqref{P2NSQQ} has a significant %essential
      difference from the isospectral CH-type equations.

\item [(ii)] The proof of Theorem 1.1 is involved in more delicate techniques  in comparison with the FORQ equation in \cite{fuguiliuqu2013cauchy}, %,qu2014well},
    the SQQ system in \cite{yan2015qualitative}, and the non-critical case in \cite{zhang2020periodic}. Let us explain details below.%, which includes:
\begin{itemize}

    \item [(a)]  In the critical case, due to the low-regularity of the space $B_{2,1}^{-1/2}$, the crucial bilinear estimate applied in \cite{yan2015qualitative} and \cite{zhang2020periodic} is inapplicable for the proof of strong convergence of the approximate solutions $(m_k,n_k)_{k\geq1}$ in $X_{-1/2,1}$. To cope with %overcome
        this difficulty, we shall first take a step back to prove the strong convergence in the larger Besov space $X_{-1/2,\infty}$ with the help of %some
        well-chosen endpoint bilinear estimates, the Logarithmic-type interpolation inequality, and the Osgood lemma. Then, employing %by using
        an interpolation argument leads the strong convergence to be lifted into the space $X_{-1/2,1}$.

  \item [(b)] Unlike the isospectral CH-type equations \eqref{CH}-\eqref{SQQ}, the uniform bound for the approximate solutions $(m_k,n_k)_{k\geq1}$ nonlinearly depends on the time variable, that is,  the $t$-variable is involved in the integral $A(0,t)\doteq\int_0^t(|\alpha(t')|+|\gamma(t')|)dt'$. As a consequence, the widely used approach for the CH-type equations is no longer working %invalid in
      for the present case. Employing a different iterative method, we obtain %find that
      the uniform bound %can be obtained
      by virtue of a modulus of continuity $\hbar(x)$, which is closely related to the $L^1$-integrability of the parameters $\alpha(t),\gamma(t)$. Such a uniform bound seems caught for the first time. The advantage of the $t$-nonlinear-dependence property of the uniform bound lies on: % is that,
      for a given initial data, the lifespan of solution can be extended to infinity by imposing proper integrability conditions to the parameters $\alpha(t),\gamma(t)$.

  \item  [(c)] The third main difficulty comes from the mutual effect between two components and the analysis of high order nonlinearities in the system, such as the estimates for $\int_\mathbb{T}\phi_{k,j} dx$ in (3.25). Instead of applying the $L^\infty$-estimate used in the derivation of uniform bound, one should carefully utilize the structure of nonlinear terms. To be more precise, it is worth  to point out %observing
      that the Schwartz space $\mathcal {S}(\mathbb{T})$ is dense in $B_{2,1}^{3/2}(\mathbb{T})$, and $u_{k+j}-\partial_x u_{k+j} \in B_{2,1}^{3/2}(\mathbb{T})$, $n_{k+j}-n_k\in B_{2,\infty}^{-3/2}(\mathbb{T})\hookrightarrow \mathcal {S}'(\mathbb{T})$. It follows from the nonhomogeneous dyadic blocks $(\Delta_l)_{l\geq-1}$ that
    $$
    \left\langle u_{k+j}-\partial_x u_{k+j},n_{k+j}-n_k\right\rangle=\sum_{|l-l'|\leq 2}\left\langle\Delta_{l}\left(u_{k+j}-\partial_x u_{k+j}\right), \Delta_{l'}(n_{k+j}-n_k)\right\rangle.
    $$
  Then, we can derive from the above identity a bound in terms of the norm $\|n_{k+j}-n_k\|_{B_{2,1}^{-1/2}}$, which is crucial in the proof of the convergence. The other term in the integral can be treated by a similar manner.
  \end{itemize}
\end{itemize}
\end{remark}

{%\color{red}
\begin{remark}
As we mentioned before, the non-isospectral system \eqref{2NSQQ} can be reduced to two important isospectral CH-type equations by appropriately %ing reasonably
 choosing  parameters $\alpha(t)$ and $\gamma(t)$, namely,%that is,
\begin{itemize}[leftmargin=0.75cm]
\item [$\bullet$] SQQ system \eqref{SQQ} (when $\alpha\equiv1$, $\gamma\equiv 0$);

\item [$\bullet$] FORQ/MCH equation \eqref{FORQ/MCH} (when $\alpha\equiv1$, $\gamma\equiv 0$ and $u\equiv v$).

\end{itemize}

It follows from  Theorem \ref{theorem1} (1) that the blow-up time $T^*$ can not be extended to infinity, which implies
the local-in-time well-posedness results for the SQQ system \eqref{SQQ} and the FORQ/MCH equation \eqref{FORQ/MCH} respectively. Therefore Theorem \ref{theorem1} covers the local results in \cite{fuguiliuqu2013cauchy,yan2015qualitative}.
\end{remark}

%???\\

Another interesting thing from our Theorem \ref{theorem1} is that one can regain %obtain
the global-in-time existence  for the SQQ system \eqref{SQQ} and the FORQ/MCH equation \eqref{FORQ/MCH} with the damping perturbation (also called the weakly dissipative term). More precisely, let us  consider the following damping perturbation of the FORQ/MCH equation:
\begin{equation*}
(\lambda-\mbox{FORQ/MCH})\quad \quad
\begin{cases}
m_t+\left[(u^2-u_x^2)m\right]_x+\lambda m=0,&~~ t>0,~x\in\mathbb{T},\\
m(0,x)=m_0(x),&~~x\in\mathbb{T},
\end{cases}
\end{equation*}
where $\lambda>0$ is the dissipative parameter. The $\lambda$-FORQ/MCH equation is actually a special case of the 2NSQQ system \eqref{P2NSQQ}. Indeed, let %if we set
 $\widetilde{m}(t,x)=e^{2\lambda t}m(t,x)$, $\widetilde{u}(t,x)=e^{2\lambda t}u(t,x)$, then %we find that the function
 apparently $\widetilde{m}(t,x)$ and $\widetilde{u}(x,t)$ satisfy the following parameterized FORQ/MCH equation:
\begin{equation*}
\begin{cases}
\widetilde{m}_t+\left[e^{-2\lambda t}(\widetilde{u}^2-\widetilde{u}_x^2)\widetilde{m}\right]_x=0,&~~ t>0,~x\in\mathbb{T},\\
\widetilde{m}(0,x)=m_0(x),&~~x\in\mathbb{T},
\end{cases}
\end{equation*}
which is obviously the 2NSQQ system \eqref{P2NSQQ} with $\alpha(t)=e^{-2\lambda t}$, $\gamma\equiv0$ and $u\equiv v$. %By applying
Casting  Theorem \ref{theorem1} and the fact  $\int_0^\infty e^{-2\lambda t}dt=\frac{1}{2\lambda}$ immediately yields the following result.

\begin{corollary} \label{corollary1}
Assume the initial data $m_0\in B_{2,1}^{1/2}(\mathbb{T})$, and the dissipative parameter $\lambda$ satisfies the following inequality
$$
\lambda\geq \frac{6C^3}{\ln 2} \left(\|m_0\|_{B^{1/2}_{2,1}}+\frac{4C^3}{\lambda}\|m_0\|_{B^{1/2}_{2,1}}^3\right)^2e^{4C^3\|m_0\|_{B^{1/2}_{2,1}}^2/\lambda}.
$$
Then, the $\lambda$-FORQ/MCH equation has a unique global strong solution $m\in C([0,T];B_{2,1}^{1/2}(\mathbb{T}))$.
\end{corollary}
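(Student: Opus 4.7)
The plan is to apply Theorem \ref{theorem1}(2) to the parameterized FORQ/MCH system produced by the exponential rescaling already sketched in the excerpt. First I would set $\widetilde{m}(t,x)=e^{2\lambda t}m(t,x)$, $\widetilde{u}(t,x)=e^{2\lambda t}u(t,x)$, which rewrites the $\lambda$-FORQ/MCH Cauchy problem as the parameterized FORQ/MCH equation with $\alpha(t)=e^{-2\lambda t}$, $\gamma(t)\equiv 0$, and $u\equiv v$, i.e., a reduction of \eqref{P2NSQQ} with $n_0=m_0$. The initial data is preserved since $\widetilde{m}(0,x)=m_0(x)$, and the rescaling map is a topological isomorphism on $C([0,T];B_{2,1}^{1/2}(\mathbb{T}))\cap C^1([0,T];B_{2,1}^{-1/2}(\mathbb{T}))$ for every $T<\infty$, so global existence for $\widetilde{m}$ and for $m$ are equivalent.

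Next I would compute the relevant integrability: since $\alpha\in L^1_{loc}([0,\infty);\R)$ and
\[
\int_0^{\infty}(|\alpha(t)|+|\gamma(t)|)\,dt=\int_0^{\infty}e^{-2\lambda t}\,dt=\frac{1}{2\lambda},
\]
the local hypotheses of Theorem \ref{theorem1}(1) hold. Plugging this value into the modulus of continuity $\widetilde{\hbar}$ from Theorem \ref{theorem1}(2), evaluated at the relevant input determined by $\|m_0\|_{B_{2,1}^{1/2}}$ (reflecting that $u\equiv v$ identifies $n_0$ with $m_0$), one obtains explicitly
\[
\widetilde{\hbar}\bigl(\|m_0\|_{B^{1/2}_{2,1}}\bigr)=\Bigl(\|m_0\|_{B^{1/2}_{2,1}}+\tfrac{4C^3}{\lambda}\|m_0\|_{B^{1/2}_{2,1}}^{3}\Bigr)\exp\!\Bigl\{\tfrac{2C^3}{\lambda}\|m_0\|_{B^{1/2}_{2,1}}^{2}\Bigr\}.
\]
The global criterion $\frac{1}{2\lambda}\leq\widetilde{K}(m_0,n_0)=\frac{\ln 2}{12C^3\widetilde{\hbar}^{2}(\cdot)}$ then rearranges into a lower bound on $\lambda$; squaring the exponential doubles its argument and yields exactly the inequality stated in the corollary.

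Once the hypotheses of Theorem \ref{theorem1}(2) are verified, that theorem delivers a unique global strong solution $\widetilde{m}\in E_{2,1}^{1/2}(\infty)$ of the parameterized FORQ/MCH equation. Reverting the scaling via $m(t,x)=e^{-2\lambda t}\widetilde{m}(t,x)$ gives a solution $m\in C([0,T];B_{2,1}^{1/2}(\mathbb{T}))$ for every $T>0$ satisfying the original $\lambda$-FORQ/MCH equation, with uniqueness inherited from the uniqueness of $\widetilde{m}$. The only step that is not immediate is the constant-matching in the second paragraph: one must carefully track how the factors $\tfrac{1}{2\lambda}$ from integrating $e^{-2\lambda t}$ and the doubling coming from squaring $\widetilde{\hbar}$ combine with the numerical constants $8C^3$, $4C^3$ of Theorem \ref{theorem1} to reproduce the stated expression. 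No new analytic estimate is needed beyond this bookkeeping, and notably no shape or sign assumption on $m_0$ is required, consistent with the novelty of Theorem \ref{theorem1} highlighted in Remark 1.3.
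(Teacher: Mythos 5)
Your proposal is correct and follows essentially the same route as the paper: the paper also rescales via $\widetilde{m}=e^{2\lambda t}m$, identifies the result as the 2NSQQ system with $\alpha(t)=e^{-2\lambda t}$, $\gamma\equiv 0$, $u\equiv v$, computes $\int_0^\infty e^{-2\lambda t}\,dt=\tfrac{1}{2\lambda}$, and invokes Theorem \ref{theorem1}(2), with the stated inequality on $\lambda$ obtained from $\tfrac{1}{2\lambda}\leq \tfrac{\ln 2}{12C^3\widetilde{\hbar}^2}$ exactly as in your bookkeeping. The only point worth noting is that a literal application of the two-component theorem with $n_0=m_0$ would give $F_0=2\|m_0\|_{B^{1/2}_{2,1}}$ rather than $\|m_0\|_{B^{1/2}_{2,1}}$, a harmless constant discrepancy that the paper glosses over in the same way you do.
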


Similarly,  %As another special case, one
we can also have %consider
the following Cauchy problem with the damping perturbation for the SQQ system:
\begin{equation*}
(\lambda-\mbox{SQQ})\quad \quad
\begin{cases}
m_t+[(u-u_x)(v+v_x)m]_x+\lambda m=0,&~~ t>0,~x\in\mathbb{T},\\
n_t+[(u-u_x)(v+v_x)n]_x+\lambda n=0,&~~ t>0,~x\in\mathbb{T},\\
m(0,x)=m_0(x),~~n(0,x)=n_0(x),&~x\in\mathbb{T},
\end{cases}	
\end{equation*}
where $\lambda>0$ is the dissipative parameter. Letting %If we set
 $\widetilde{m}(t,x)=e^{2\lambda t}m(t,x)$, $\widetilde{n}(t,x)=e^{2\lambda t}n(t,x)$ sends %, then we find that
 the functions $\widetilde{m}(t,x)$ and $\widetilde{u}(x,t)$ to %satisfies
 the following parameterized SQQ system:
\begin{equation*}
\begin{cases}
\widetilde{m}_t+\left[e^{-2\lambda t}(\widetilde{u}-\widetilde{u}_x)(\widetilde{v}+\widetilde{v}_x)\widetilde{m}\right]_x=0,&~~ t>0,~x\in\mathbb{T},\\
\widetilde{n}_t+\left[e^{-2\lambda t}(\widetilde{u}-\widetilde{u}_x)(\widetilde{v}+\widetilde{v}_x)\widetilde{n}\right]_x=0,&~~ t>0,~x\in\mathbb{T},\\
\widetilde{m}(0,x)=m_0(x),~~\widetilde{n}(0,x)=n_0(x),&~x\in\mathbb{T},
\end{cases}	
\end{equation*}
which is actually the 2NSQQ system \eqref{P2NSQQ} with $\alpha(t)=e^{-2\lambda t}$ and $\gamma\equiv0$. Utilizing %Applying
Theorem \ref{theorem1} again generates %, we get
the following Corollary.

\begin{corollary} \label{corollary2}
Assume the initial data $(m_0, n_0)\in  X_{1/2,1}$, and the dissipative parameter $\lambda$ satisfies the following inequality
$$
\lambda\geq \frac{6C^3}{\ln 2} \left(\|m_0\|_{B^{1/2}_{2,1}}+\|n_0\|_{B^{1/2}_{2,1}}+\frac{4C^3}{\lambda}(\|m_0\|_{B^{1/2}_{2,1}}+\|n_0\|_{B^{1/2}_{2,1}})^3\right)^2e^{4C^3(\|m_0\|_{B^{1/2}_{2,1}}+\|n_0\|_{B^{1/2}_{2,1}})^2/\lambda}.
$$
Then, the $\lambda$-SQQ system has a unique global strong solution $(m,n)\in E_{2,1}^{1/2}(\infty)$.
\end{corollary}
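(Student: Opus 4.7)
The proof proceeds exactly parallel to Corollary \ref{corollary1}, relying on the exponential change of variables indicated in the paragraph preceding the statement to reduce the $\lambda$-SQQ Cauchy problem to a special case of the general problem \eqref{P2NSQQ}, and then invoking the global existence portion of Theorem \ref{theorem1}. Taking the rescaled unknowns $\widetilde{m}, \widetilde{n}$ defined via the prescribed exponential time factor, a direct calculation identifies $(\widetilde{m},\widetilde{n})$ as a solution of the parameterized SQQ system displayed above, which is exactly \eqref{P2NSQQ} with $\alpha(t) = e^{-2\lambda t}$, $\gamma \equiv 0$, and initial data $(\widetilde{m}(0,\cdot),\widetilde{n}(0,\cdot)) = (m_0,n_0) \in X_{1/2,1}$; the dissipative terms $\lambda m$ and $\lambda n$ are absorbed into the multiplicative coefficient $e^{-2\lambda t}$ inside the flux, while the cross-coupling between the two components is preserved.

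Next I would verify the hypothesis of Theorem \ref{theorem1}(2) by computing
\begin{equation*}
\int_0^\infty\bigl(|\alpha(t)| + |\gamma(t)|\bigr)\,dt \;=\; \int_0^\infty e^{-2\lambda t}\,dt \;=\; \frac{1}{2\lambda}.
\end{equation*}
Writing $A = \|m_0\|_{B^{1/2}_{2,1}} + \|n_0\|_{B^{1/2}_{2,1}}$ and substituting $1/(2\lambda)$ for the integral in the definition of $\widetilde{\hbar}$ from Theorem \ref{theorem1} yields
\begin{equation*}
\widetilde{\hbar}(A) \;=\; \left(A + \frac{4C^3 A^3}{\lambda}\right)\exp\!\left\{\frac{2C^3 A^2}{\lambda}\right\}.
\end{equation*}
The required bound $\int_0^\infty(|\alpha|+|\gamma|)\,dt \le \widetilde{K}(m_0,n_0) = \ln 2 / (12 C^3 \widetilde{\hbar}(A)^2)$ then rearranges, after clearing denominators and squaring the exponential, into
\begin{equation*}
\lambda \;\ge\; \frac{6 C^3}{\ln 2}\left(A + \frac{4C^3}{\lambda}A^3\right)^2 e^{4 C^3 A^2/\lambda},
\end{equation*}
which is precisely the standing hypothesis of the corollary. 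Theorem \ref{theorem1}(2) therefore produces a unique global solution $(\widetilde{m},\widetilde{n})\in E_{2,1}^{1/2}(\infty)$ of the parameterized SQQ system.

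Finally, inverting the exponential rescaling returns a global solution $(m,n)$ of the $\lambda$-SQQ system. Multiplication by the smooth scalar time factor preserves continuity in $B^{1/2}_{2,1}(\mathbb{T})$ and the $C^1$-in-time regularity into $B^{-1/2}_{2,1}(\mathbb{T})$ required for membership in $E_{2,1}^{1/2}(\infty)$, and uniqueness transfers by applying the same rescaling to any alleged second solution and invoking the uniqueness clause of Theorem \ref{theorem1}. The step most requiring care is purely algebraic bookkeeping, matching the abstract bound $\widetilde{K}(m_0,n_0)$ with the explicit inequality in the statement; no additional a priori estimates beyond those already established for Theorem \ref{theorem1} are needed, and consequently I do not expect any genuine obstacle.
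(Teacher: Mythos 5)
Your proposal is correct and follows exactly the route the paper takes: the exponential rescaling $\widetilde{m}=e^{2\lambda t}m$, $\widetilde{n}=e^{2\lambda t}n$ converts the $\lambda$-SQQ system into \eqref{P2NSQQ} with $\alpha(t)=e^{-2\lambda t}$, $\gamma\equiv 0$, and the condition $\int_0^\infty e^{-2\lambda t}\,dt=\tfrac{1}{2\lambda}\leq \widetilde{K}(m_0,n_0)$ rearranges precisely to the stated inequality on $\lambda$, after which Theorem \ref{theorem1}(2) applies. Your algebraic verification that $\tfrac{1}{2\lambda}\le \ln 2/(12C^3\widetilde{\hbar}^2(A))$ is equivalent to the hypothesis is accurate, and the remarks on transferring regularity and uniqueness back through the rescaling are sound.
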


It is worth to note that the two Corollaries \ref{corollary1}-\ref{corollary2} are new phenomena for the weakly dissipative shallow water wave equations, which indicate that the dissipative parameters can also determine the global existence results independent of %without
the shape conditions on initial data (cf. \cite{wu2008blow,wu2009global}).
}
%???\\
Our second goal %aim
in the present paper is to investigate the finite time blow-up regime %phenomena
for the 2NSQQ system \eqref{P2NSQQ} in Besov spaces, which in some sense tell us how the time-dependent parameters $\alpha(t)$ and $\gamma(t)$ affect the singularity formation. The first blow-up criteria is formulated for the $X_{1/2,1}$-valued initial data given in the following theorem.

\begin{theorem}\label{theorem2}
Assume the parameters $\alpha,\gamma \in L^1_{loc}([0,\infty);\mathbb{R})$, and the initial data $(m_0,n_0)\in X_{1/2,1}$.  If the  corresponding solution  $(m,n)$ blows up in the finite time $T^*$, then
\begin{equation*}
\begin{split}
 \int_0^{T^*}(|\alpha(t')|+|\gamma(t')|)\Big( \|m(t')\|_{\dot{B}_{\infty,1}^0}^2+\|n(t')\|_{\dot{B}_{\infty,1}^0}^2\Big)dt'=\infty,
\end{split}
\end{equation*}
and the blow-up time $T^*$ is estimated as follows %from below as
\begin{equation*}
\begin{split}
T^*\geq T(m_0,n_0)\doteq \sup_{t>0}\left\{\int_0^{t}(|\alpha(t')|+|\gamma(t')|)dt'\leq\frac{1}{C\Big(\| m_0\|_{B_{2,1}^{1/2}}+ \| n_0\|_{B_{2,1}^{1/2}}\Big)^2}\right\}.
\end{split}
\end{equation*}
\end{theorem}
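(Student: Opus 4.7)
The plan is to derive a single differential inequality for $Y(t):=\|m(t)\|_{B^{1/2}_{2,1}}+\|n(t)\|_{B^{1/2}_{2,1}}$ from which both claims follow: a Grönwall-type bound delivers the blow-up criterion, while its cubic relaxation yields the explicit lower bound on $T^*$. I begin by applying the standard Besov transport estimate of Danchin type to the two equations in \eqref{P2NSQQ}. Since the definition of $\partial_x^{-1}$ implies $\partial_x\rho=\psi-\overline{\psi}$, the divergence of the transport velocity is $\psi-\overline{\psi}$, and the commutator estimate produces
\[
\|m(t)\|_{B^{1/2}_{2,1}}\leq \|m_0\|_{B^{1/2}_{2,1}}+C\int_0^t \Bigl(\|\psi-\overline{\psi}\|_{B^{1/2}_{2,1}}\|m\|_{B^{1/2}_{2,1}}+\|m(\psi-\overline{\psi})\|_{B^{1/2}_{2,1}}\Bigr)ds,
\]
together with the analogous bound for $n$.

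Next, using $u-u_x=(1+\partial_x)^{-1}m$ and $v+v_x=(1-\partial_x)^{-1}n$, both of which are bounded on $L^\infty$ and on $B^{1/2}_{2,1}$, I would estimate $\psi=(\alpha+\gamma)(v+v_x)m-\alpha(u-u_x)n$ via Bony's paradifferential calculus. The critical-index algebra inequality
\[
\|fg\|_{B^{1/2}_{2,1}}\leq C\bigl(\|f\|_{L^\infty}\|g\|_{B^{1/2}_{2,1}}+\|g\|_{L^\infty}\|f\|_{B^{1/2}_{2,1}}\bigr),
\]
together with the embedding $\dot B^0_{\infty,1}\hookrightarrow L^\infty$, gives $\|\psi-\overline{\psi}\|_{B^{1/2}_{2,1}}\leq C(|\alpha|+|\gamma|)(\|m\|_{\dot B^0_{\infty,1}}+\|n\|_{\dot B^0_{\infty,1}})Y$ and $\|\psi-\overline{\psi}\|_{L^\infty}\leq C(|\alpha|+|\gamma|)\|m\|_{\dot B^0_{\infty,1}}\|n\|_{\dot B^0_{\infty,1}}$. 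A second application of the same product estimate to $m(\psi-\overline{\psi})$ then yields
\[
\|m(\psi-\overline{\psi})\|_{B^{1/2}_{2,1}}\leq C(|\alpha|+|\gamma|)\bigl(\|m\|_{\dot B^0_{\infty,1}}^2+\|n\|_{\dot B^0_{\infty,1}}^2\bigr)Y.
\]
Summing the two transport estimates and invoking Grönwall's inequality produces
\[
Y(t)\leq Y(0)\exp\Bigl\{C\int_0^t(|\alpha|+|\gamma|)(\|m\|_{\dot B^0_{\infty,1}}^2+\|n\|_{\dot B^0_{\infty,1}}^2)\,ds\Bigr\},
\]
so if the integral on the right were finite at $T^*$, $Y$ would stay bounded and the local existence result in Theorem \ref{theorem1} would permit continuation beyond $T^*$, contradicting maximality; this proves the blow-up criterion.

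For the lower bound, I coarsen the inequality via the embedding $B^{1/2}_{2,1}\hookrightarrow \dot B^0_{\infty,1}$ to obtain the separable cubic inequality $Y'(t)\leq C(|\alpha(t)|+|\gamma(t)|)Y(t)^3$, whose solution satisfies $Y(t)^{-2}\geq Y(0)^{-2}-2C\int_0^t(|\alpha|+|\gamma|)ds$; hence $Y$ remains finite, and the solution persists, as long as $\int_0^t(|\alpha|+|\gamma|)ds<(2CY(0)^2)^{-1}$, which (after absorbing the factor of two into $C$) is exactly the condition defining $T(m_0,n_0)$. The main obstacle is the sharp product estimate at the critical index $s=1/2$: the paradifferential decomposition leaves no room to freely trade regularity, so concentrating two $\dot B^0_{\infty,1}$ factors against only one $B^{1/2}_{2,1}$ factor on the right-hand side requires careful symmetrization of the paraproducts and remainder, while the nonlocal operator $\partial_x^{-1}$ and the mean subtraction $\psi-\overline{\psi}$ must be tracked consistently throughout so that the Grönwall exponent acquires the precise weighted form claimed in the theorem.
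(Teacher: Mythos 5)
Your lower-bound argument is correct and is exactly the paper's: coarsen everything through $B^{1/2}_{2,1}\hookrightarrow L^\infty$ to get the separable cubic inequality $Y'\leq C(|\alpha|+|\gamma|)Y^3$ and integrate. The gap is in the blow-up criterion. When you invoke the black-box transport estimate (Lemma \ref{lem:priorieti} with $s=1/2<1+d/p$), the transport/commutator contribution is charged at the price $\|\partial_x\rho\|_{B^{1/2}_{2,1}\cap L^\infty}\,\|m\|_{B^{1/2}_{2,1}}$, which is what your first display records. But $\|\psi-\overline{\psi}\|_{B^{1/2}_{2,1}}$ cannot be bounded by $L^\infty$-type norms alone: as you yourself compute, the best you get is $C(|\alpha|+|\gamma|)(\|m\|_{\dot B^0_{\infty,1}}+\|n\|_{\dot B^0_{\infty,1}})\,Y$, so this term in the integrand is of size $(|\alpha|+|\gamma|)(\|m\|_{\dot B^0_{\infty,1}}+\|n\|_{\dot B^0_{\infty,1}})\,Y^2$ --- quadratic in $Y$ with a coefficient only \emph{linear} in the $\dot B^0_{\infty,1}$ norms. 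Such a term cannot be absorbed into a Gr\"onwall inequality of the form $Y'\leq a(t)Y$ with $a(t)=C(|\alpha|+|\gamma|)(\|m\|_{\dot B^0_{\infty,1}}^2+\|n\|_{\dot B^0_{\infty,1}}^2)$; the extra factor of $Y$ in the coefficient is precisely the quantity whose boundedness you are trying to establish, so the asserted exponential bound and the ensuing continuation argument do not follow. (Young's inequality does not help either: splitting off the $Y$ produces a cubic term $(|\alpha|+|\gamma|)Y^3$ that is not controlled by the hypothesis.)

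The paper avoids this by not using the ready-made transport lemma at this point. It applies $\Delta_q$ to the equation, performs the $L^2$ energy estimate on each block (so the transport term costs only $\|\partial_x\rho\|_{L^\infty}\|\Delta_q m\|_{L^2}^2$ after an integration by parts), and controls the commutator by the estimate $\sum_q 2^{q\sigma}\|[\rho\partial_x,\Delta_q]m\|_{L^2}\leq C\|\partial_x\rho\|_{L^\infty}\|m\|_{B^{\sigma}_{2,1}}$, valid for $0<\sigma<1$ (Lemma \ref{commutator}). Since $\|\partial_x\rho\|_{L^\infty}=\|\psi-\overline{\psi}\|_{L^\infty}\leq C(|\alpha|+|\gamma|)\|m\|_{L^\infty}\|n\|_{L^\infty}$ is genuinely quadratic in the $L^\infty$ (hence $\dot B^0_{\infty,1}$) norms and involves no $B^{1/2}_{2,1}$ norm, every term in the resulting inequality is of the form $C(|\alpha|+|\gamma|)(\|m\|_{\dot B^0_{\infty,1}}^2+\|n\|_{\dot B^0_{\infty,1}}^2)\,Y$, and Gr\"onwall closes. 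Your treatment of the source term $m(\psi-\overline{\psi})$ via the Moser-type product estimate is fine and matches the paper; the repair needed is only in how the transport velocity is charged.
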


\begin{remark}
The blow-up criteria stated in Theorem \ref{theorem2} is not contradictory with Theorem \ref{theorem1}, which reveals %indicates
that the $L^1$-integrability of the parameters $\alpha$ and $\gamma$ determine the behavior of the solutions to the 2NSQQ system \eqref{2NSQQ}. Indeed, given a %if we
further assumption $\|\alpha\|_{L^1(0,\infty)}+\|\gamma\|_{L^1(0,\infty)}= 1/2C(\| m_0\|_{B_{2,1}^{1/2}}+ \| n_0\|_{B_{2,1}^{1/2}})^2$, then it follows from  Theorem \ref{theorem2} that $T^*=T(m_0,n_0)=\infty$, that is to say, the solution $(m,n)$ exists globally. Moreover, due to $H^s(\mathbb{T})\cong   B_{2,2}^{s}(\mathbb{T})\hookrightarrow B_{2,1}^{1/2}(\mathbb{T})$ with $s>1/2$,  Theorem \ref{theorem2} improved the blow-up criteria in Sobolev spaces (cf. \cite{%41,qu2014well,
zhang2020periodic}).
\end{remark}

Our third goal in the paper is to construct %main result considers
the blow-up criteria with the initial data in a bit regular space $X_{1/2+\epsilon,r}$, for any $\epsilon \in (0,1/2)$. The following theorem presents this result.
%The main result is stated as follows.

\begin{theorem}\label{theorem3}
Let $\epsilon \in (0,1/2)$ and $r\in [1,\infty]$. Assume  the parameters $\alpha,\gamma \in L^1_{loc}([0,\infty);\mathbb{R})$, and the initial data $(m_0,n_0)\in  X_{1/2+\epsilon,r}$. If the corresponding solution  $(m,n)$ blows up in the finite time $T^*$, then
\begin{equation*}
\begin{split}
 \int_0^{T^*} (|\alpha(t')|+|\gamma(t')|)\Big( \|m(t')\|_{\dot{B}_{\infty,2}^{0}}^2+\|n(t')\|_{\dot{B}_{\infty,2}^{0}}^2\Big)dt'=\infty,
\end{split}
\end{equation*}
and the blow-up time $T^*$ is estimated as follows %from below as
\begin{equation*}
\begin{split}
T^*\geq T'(m_0,n_0)\doteq\sup_{t>0}\left\{\int_0^t(|\alpha(t')|+|\gamma(t')|)dt'\leq \frac{1}{C\Big(\sqrt{2}e+\| m_0\|_{B_{2,r}^{1/2+\epsilon}}+\| n_0\|_{B_{2,r}^{1/2+\epsilon}} \Big)^6}\right\}.
\end{split}
\end{equation*}
\end{theorem}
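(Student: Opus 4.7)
The plan is a Beale-Kato-Majda-type contradiction argument combining a Besov transport estimate, a logarithmic Sobolev interpolation, and Osgood's lemma. Local well-posedness of \eqref{P2NSQQ} in $X_{1/2+\epsilon,r}$ for $\epsilon\in(0,1/2)$ is a routine variant of Theorem \ref{theorem1} (the sub-critical case is in fact simpler), so let $T^*$ be the maximal existence time, and set $s\doteq 1/2+\epsilon$ together with $Y(t)\doteq\|m(t)\|_{B^s_{2,r}}+\|n(t)\|_{B^s_{2,r}}$. Assuming toward contradiction that $T^*<\infty$ while
\begin{equation*}
\int_0^{T^*}(|\alpha(t')|+|\gamma(t')|)\big(\|m(t')\|_{\dot{B}^0_{\infty,2}}^2+\|n(t')\|_{\dot{B}^0_{\infty,2}}^2\big)\,dt'<\infty,
\end{equation*}
the goal is to show that $Y$ stays bounded on $[0,T^*)$, contradicting blow-up in $X_{s,r}$.

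Step one is to apply the Bahouri-Chemin-Danchin transport estimate in $B^s_{2,r}$ to the two nonlocal transport equations of \eqref{P2NSQQ} with velocity $\rho$ and sources $-m(\psi-\overline{\psi})$, $-n(\psi-\overline{\psi})$. Because $\partial_x\rho=\psi$ and $u=G\ast m$, $v=G\ast n$ with the periodic Helmholtz kernel $G$, Young's inequality yields $\|u\|_{L^\infty},\|u_x\|_{L^\infty}\leq C\|m\|_{L^\infty}$ (and analogously for $v$), so
\begin{equation*}
\|\partial_x\rho\|_{L^\infty}\leq C(|\alpha|+|\gamma|)\big(\|m\|_{L^\infty}^2+\|n\|_{L^\infty}^2\big),
\end{equation*}
and the tame Besov product estimate together with $B^s_{2,r}\hookrightarrow L^\infty$ (valid since $s>1/2$) provides the analogous cubic bound for the source in $B^s_{2,r}$. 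Gronwall then produces
\begin{equation*}
\log(e+Y(t))\leq\log(e+Y(0))+C\!\int_0^t(|\alpha|+|\gamma|)\big(\|m\|_{L^\infty}^2+\|n\|_{L^\infty}^2\big)\,ds.
\end{equation*}
The core of the argument is to trade the $L^\infty$ norms above for the weaker $\dot{B}^0_{\infty,2}$ norm via the logarithmic Sobolev inequality
\begin{equation*}
\|f\|_{L^\infty}^2\leq C\|f\|_{\dot{B}^0_{\infty,2}}^2\log\!\big(e+\|f\|_{B^s_{2,r}}\big),
\end{equation*}
which one proves by truncating the Littlewood-Paley sum $\|f\|_{L^\infty}\leq\sum_{j}\|\Delta_j f\|_{L^\infty}$ at a well-chosen frequency $N$ and applying Cauchy-Schwarz on the low modes together with Bernstein on the high modes. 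Inserting this bound and invoking Osgood's lemma yields
\begin{equation*}
\log(e+Y(t))\leq\log(e+Y(0))\exp\!\Big(C\!\int_0^t(|\alpha|+|\gamma|)\big(\|m\|_{\dot{B}^0_{\infty,2}}^2+\|n\|_{\dot{B}^0_{\infty,2}}^2\big)\,ds\Big),
\end{equation*}
whose right-hand side is finite on $[0,T^*)$ under the contradiction hypothesis, producing the desired contradiction. The hard part will be handling the endpoint index $r=2$ cleanly: the product estimate leading to the $\dot{B}^0_{\infty,2}$ form is more delicate than the $\dot{B}^0_{\infty,1}$ version used in Theorem \ref{theorem2} and requires careful paradifferential bookkeeping to avoid dropping a $\log$ factor.

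For the quantitative lower bound $T^*\geq T'(m_0,n_0)$, I would abandon the logarithmic improvement and close a purely algebraic differential inequality. The crude embedding $\|m\|_{L^\infty}\leq C\|m\|_{B^s_{2,r}}$ combined with the transport and product estimates above yields a closed separable inequality of the form
\begin{equation*}
\frac{d}{dt}\big(\sqrt{2}e+Y(t)\big)\leq C(|\alpha(t)|+|\gamma(t)|)\,\Phi\!\big(\sqrt{2}e+Y(t)\big),
\end{equation*}
for an explicit polynomial $\Phi$ arising from the cubic structure of $\psi$ together with the extra $Y$-factors picked up through the Gronwall exponential of $\|\partial_x\rho\|_{L^\infty}$ and the iteration of the product estimates; the offset $\sqrt{2}e$ enters so that $\log(e+Y)\leq Y$ holds uniformly in $Y\geq 0$ when one converts the log-Gronwall bound back into algebraic form. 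Separation of variables and careful constant-tracking then yields $\int_0^{T^*}(|\alpha|+|\gamma|)\,dt'\gtrsim 1/(\sqrt{2}e+\|m_0\|_{B^s_{2,r}}+\|n_0\|_{B^s_{2,r}})^{6}$, the sixth power arising from composing the cubic Gronwall exponent with the cubic source estimate inside the iteration.
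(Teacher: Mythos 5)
Your proposal follows essentially the same route as the paper: a commutator/transport estimate in $B^{1/2+\epsilon}_{2,r}$, the Brezis--Gallouet--Wainger interpolation $\|f\|_{L^\infty}^2\lesssim\bigl(1+\|f\|_{\dot{B}^0_{\infty,2}}^2\bigr)\log\bigl(e+\|f\|_{B^{1/2+\epsilon}_{2,r}}\bigr)$ to close a logarithmic Gronwall/Osgood inequality for the blow-up criterion, and a crude separable polynomial differential inequality for the lower bound on $T^*$. The only place your account drifts from the paper is the provenance of the exponent $6$: the paper reaches $\frac{d}{dt}(\sqrt{2}e+Y)\lesssim(|\alpha|+|\gamma|)(\sqrt{2}e+Y)^7$ by squaring a $\dot{B}^0_{\infty,\infty}$ logarithmic estimate and then bounding $\log(2e^2+x)\le 2e^2+x$, not by ``composing the cubic Gronwall exponent with the cubic source estimate''; in fact a direct closure via $\|m\|_{L^\infty}\lesssim Y$ gives only $Y^3$ on the right and hence an exponent-$2$ threshold, which is stronger and still implies the stated bound, so your argument is sound either way.
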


\begin{remark}
The blow-up regime described in Theorem \ref{theorem3} is resided in the Besove space $\dot{B}^0_{\infty,2}(\mathbb{T})$. Recall the Sobolev embedding
$$
\dot{B}_{\infty,1}^{0}(\mathbb{T})\hookrightarrow\dot{B}_{\infty,2}^{0}(\mathbb{T})\hookrightarrow\dot{F}_{\infty,2}^{0}(\mathbb{T})\cong BMO \hookrightarrow\dot{B}_{\infty,\infty}^{0}(\mathbb{T}),
$$
where $\dot{F}_{\infty,2}^{0}(\mathbb{T})$ is a special Lizorkin-Triebel space and $BMO$ is the space of the bounded mean oscillation (cf. \cite{bergh2012interpolation,strichartz1980bounded}).  This is slightly stronger than the blow-up criteria in Theorem \ref{theorem2}, but weaker than the one  for the CH equation in $\dot{B}_{\infty,\infty}^{0}(\mathbb{R})$ \cite{danchin2001few} and the one for the Fornberg-Whitham
equation in the $BMO$ case \cite{longwei2018wave}. Such gap originates from the balance between the order of the nonlinearity and the order of the interpolation inequality in \eqref{6.12}. More specifically, the order of nonlinearity of the 2NSQQ system \eqref{P2NSQQ} is cubic instead of quadratic, which leads to the quadratic $L^\infty$-estimation in Eq. (6.18) below. Note that the similar blow-up criteria has also been derived for the harmonic heat flow equation onto a sphere \cite{kozono2002critical}.
\end{remark}

The remaining of this paper is organized as follows. In Section 2, we introduce the Littlewood-Paley theory, and some well-known results of the transport theory in Besov spaces. The Sections 3-5 are devoted to the proof of the local and global Hadamard well-posedness for the 2NSQQ system \eqref{2NSQQ} in the critical Besov space. In Section 6, we derive two kinds of blow-up criteria for the strong solutions to the 2NSQQ system \eqref{2NSQQ}. The lower bound of the blow-up time are also addressed.%provided.

\vskip3mm\noindent
\textbf{Notation.} Throughout the paper, since all spaces of functions %of interest
  are over the torus $\mathbb{T}$, we will drop $\mathbb{T}$ in the notation of function spaces if there is no any specific to be clarified. %ambiguity.
  Let $1\leq p \leq \infty$ and $X$ be a Banach space, for simplicity, %we denote
  the functional spaces $L^p(0, T; X)$  and $C^k([0,T];X)$ are denoted  by $L^p_T(X)$ and by $C^k_T(X)$, respectively.

%???\\
%%%%%%%%%%%%%%%%%%%%%%%%%%%%%%%%%%%%%%%%%%%%%%%%%%
\section{Preliminaries}\label{sec:prelim}
%%%%%%%%%%%%%%%%%%%%%%%%%%%%%%%%%%%%%%%%%%%%%%%%%%
In this section, we recall some well-known facts of the Littlewood-Paley decomposition theory and the linear transport theory in Besov spaces.
\begin{lemma} [\cite{64,chemin2004localization}]\label{lem:Dyaunit}
 Denote by $\mathcal {C}$ the annulus of centre 0, short radius $3/4$ and long radius $8/3$. Then there exits two positive radial functions $\chi$ and $\varphi$ belonging respectively to $C_c^\infty(B(0,4/3))$ and $ C_c^\infty(\mathcal {C})$ such that
 $$
 \chi(\xi)+\sum_{q\geq0}\varphi(2^{-q}\xi)=1, \quad \forall \xi \in \mathbb{R}^d,
 $$
 $$
 |p-q|\geq2\Rightarrow \supp \varphi(2^{-q}\cdot)\cap \supp \varphi(2^{-p}\cdot)=\emptyset,
 $$
 $$
 q\geq1\Rightarrow\supp \chi ( \cdot)\cap \supp \varphi (2^{-q}\cdot)=\emptyset,
 $$
and
 $$
 \frac{1}{3}\leq \chi^2(\xi)+\sum_{q\geq0} \varphi^2(2^{-q}\xi)\leq1,\quad \forall \xi\in \mathbb{R}^d.
 $$
\end{lemma}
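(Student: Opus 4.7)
The statement is the standard dyadic partition of unity from Littlewood--Paley theory, so the plan is to build everything from a single radial bump function. First I would fix a radial $\chi\in C_c^\infty(B(0,4/3))$ with $0\le\chi\le1$ and $\chi\equiv1$ on $\overline{B(0,3/4)}$; such a function exists by convolving the indicator of a ball of intermediate radius with a suitable smooth mollifier, and radial symmetry is preserved since the mollifier can be chosen radial. Then define
\begin{equation*}
\varphi(\xi):=\chi(\xi/2)-\chi(\xi),
\end{equation*}
which is automatically radial and smooth with compact support. The support property follows by inspection: $\varphi(\xi)=0$ whenever $|\xi|\le 3/4$ (both terms equal $1$) and whenever $|\xi|\ge 8/3$ (both terms equal $0$), so $\supp\varphi\subset \mathcal{C}$.

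Next, for the partition-of-unity identity I would exploit the telescoping structure
\begin{equation*}
\chi(\xi)+\sum_{q=0}^{N}\varphi(2^{-q}\xi)=\chi(2^{-(N+1)}\xi),
\end{equation*}
and let $N\to\infty$: for every fixed $\xi$ the right-hand side equals $1$ as soon as $2^{-(N+1)}|\xi|\le 3/4$, so the sum converges pointwise to $1$, and in fact the sum is locally finite (see below) so convergence is even stronger. For the disjointness conditions, note that $\supp\varphi(2^{-q}\cdot)\subset\{3\cdot 2^{q-2}\le|\xi|\le 8\cdot 2^{q}/3\}$; if $|p-q|\ge 2$, say $p\ge q+2$, then $3\cdot 2^{p-2}\ge 3\cdot 2^{q}>8\cdot 2^q/3$, which forces the two supports to be disjoint. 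Similarly $\supp\chi\subset\{|\xi|\le 4/3\}$ while $\supp\varphi(2^{-q}\cdot)\subset\{|\xi|\ge 3\cdot 2^{q-2}\}$, and for $q\ge 1$ we have $3\cdot 2^{q-2}\ge 3/2>4/3$, giving the last disjointness claim. A consequence is that at every $\xi\in\R^d$ at most three of the functions $\chi(\xi),\varphi(2^{-q}\xi)$ ($q\ge 0$) are non-zero.

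The only mildly non-obvious step is the two-sided bound. The upper bound $\chi^2(\xi)+\sum_{q\ge 0}\varphi^2(2^{-q}\xi)\le1$ is immediate because each term lies in $[0,1]$ and all terms sum to $1$, so each squared term is dominated by the unsquared term. For the lower bound I would use the Cauchy--Schwarz inequality together with the local-finiteness just established: writing $S(\xi)$ for the (at most three) non-zero summands among $\chi(\xi),\varphi(2^{-q}\xi)$, one has
\begin{equation*}
1=\Bigl(\chi(\xi)+\sum_{q\ge 0}\varphi(2^{-q}\xi)\Bigr)^{2}\le \#S(\xi)\cdot\Bigl(\chi^2(\xi)+\sum_{q\ge 0}\varphi^2(2^{-q}\xi)\Bigr)\le 3\Bigl(\chi^2(\xi)+\sum_{q\ge 0}\varphi^2(2^{-q}\xi)\Bigr),
\end{equation*}
which yields the desired lower bound $1/3$.

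The main obstacle, if any, is really bookkeeping of the radii: one must verify that the specific choices $3/4$ and $8/3$ are mutually consistent with the ratio $2$ used in the dilation (they are, since $8/3<2\cdot 3/4\cdot 2=3$) and with the threshold $|p-q|\ge 2$ in the disjointness statement. Everything else is a straightforward manipulation of compactly supported smooth functions and a telescoping identity.
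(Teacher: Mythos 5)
Your construction is the standard one from the cited references (Bahouri--Chemin--Danchin); the paper itself gives no proof of this lemma, only the citation, and your telescoping argument, support bookkeeping, and Cauchy--Schwarz step for the lower bound all check out. One point you leave implicit: the nonnegativity of $\varphi(\xi)=\chi(\xi/2)-\chi(\xi)$ -- which is both part of the statement (``positive'') and used in your upper-bound step ``each term lies in $[0,1]$'' -- does not follow from $0\le\chi\le1$ and $\chi\equiv1$ on $\overline{B(0,3/4)}$ alone; it requires $\chi$ to be radially non-increasing, which your mollified-indicator construction does provide but which you should state and verify explicitly.
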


The Fourier transformation of $u$ on the $d$-dimension torus $\mathbb{T}^d$ is defined by
$$
\widehat{u}(\alpha)\doteq \int_{\mathbb{T}^d}u(x)e^{-2\pi i\langle\alpha,x\rangle }dx.
$$
Let %Denote
$$
\check{h}(x)=\sum_{\alpha\in\mathbb{Z}^d} \chi(\alpha)e^{2\pi i\langle\alpha,x\rangle }\quad \mbox{and}\quad h_q(x)=\sum_{\alpha\in\mathbb{Z}^d} \varphi (2^{-q}\alpha)e^{2\pi i\langle\alpha,x\rangle },
$$
then  the nonhomogeneous dyadic blocks $(\Delta_q)_{q\geq-1}$  can be defined as follows
$$
\Delta_qu\doteq0,\quad q\leq-2,
$$
$$
\Delta_{-1}u\doteq \sum_{\alpha \in \mathbb{Z}^d} \chi (\alpha) \widehat{u}(\alpha)e^{ 2\pi i\langle\alpha,x\rangle }=\int_{\mathbb{T}^d} u(x-y)\check{h}(y)dy,\quad  q=-1,
$$
$$
\Delta_{q}u\doteq \sum_{\alpha \in \mathbb{Z}^d} \varphi (2^{-q}\alpha)\widehat{u}(\alpha) e^{ 2\pi i\langle\alpha,x\rangle }=\int_{\mathbb{T}^d} u(x-y)h_q(y)dy,\quad  q\geq 0.
$$
The nonhomogeneous Littlewood-Paley  decomposition of $u\in \mathscr{S}'(\mathbb{T}^d)$ is denoted by %fined by
$$
u=\sum_{q\geq-1} \Delta_{q} u.
$$
%We also define t
The high-frequency cut-off operator is referred to %as
$$
S_qu\doteq\sum\limits_{p\leq q-1 }\Delta _p u, \quad \forall q\in\mathbb{N}.
$$

\begin{definition} [\cite{64%,qu2014well
}]\label{def:PBesov}
For any $s\in \mathbb{R}$ and $p,r\in [1,\infty] $, the $d$-dimension nonhomogeneous Besov space $B_{p,r}^s(\mathbb{T}^d) $ is defined by
$$
B_{p,r}^s(\mathbb{T}^d)  \doteq\left\{u\in \mathscr{S}'(\mathbb{T}^d); ~\|u\|_{B_{p,r}^s}=\|(2^{qs}\| \Delta _qu\|_{L^p})_{l\geq -1}\|_{l^r}  <\infty\right\},
$$
If $s=\infty$,  $B_{p,r}^\infty(\mathbb{T}^d)\doteq\bigcap_{s\in\mathbb{R}}B_{p,r}^s(\mathbb{T}^d)$.
\end{definition}

Using Lemma \ref{lem:Dyaunit}, one can also define the homogeneous dyadic blocks $(\dot{\Delta}_q)_{q\in \mathbb{Z}}$ and the homogeneous cut-off operators $\dot{S}_q$ as follows:
$$
\dot{\Delta}_{q}u\doteq \sum_{\alpha \in \mathbb{Z}^d} \varphi (2^{-q}\alpha)\widehat{u}(\alpha) e^{ 2\pi i\langle\alpha,x\rangle }=\int_{\mathbb{T}^d} u(x-y)h_q(y)dy,\quad  \forall q\in \mathbb{Z},
$$
and
$$
\dot{S}_qu\doteq \sum_{\alpha \in \mathbb{Z}^d} \chi (\alpha) \widehat{u}(\alpha)e^{ 2\pi i\langle\alpha,x\rangle }=\int_{\mathbb{T}^d} u(x-y)\check{h}(y)dy,\quad  \forall q\in \mathbb{Z}.
$$

\begin{definition} [\cite{64}] \label{def:HPBesov}
For any $s\in \mathbb{R}$ and $p,r\in [1,\infty] $, the $d$-dimension inhomogeneous Besov space $\dot{B}_{p,r}^s(\mathbb{T}^d) $ is defined by
$$
\dot{B}_{p,r}^s(\mathbb{T}^d) \doteq\left\{u\in \mathscr{S}'(\mathbb{T}^d)/\mathscr{P}(\mathbb{T}^d); ~\|u\|_{\dot{B}_{p,r}^s}\doteq\left\|(2^{qs}\left\|\dot{\Delta}_qu\right\|_{L^p})_{l\in \mathbb{Z}}\right\|_{l^r} <\infty\right\},
$$
where $\mathscr{P}(\mathbb{T}^d)$ denotes the space of the polynomial functions on $\mathbb{T}^d$.
\end{definition}

Now, let us list %state
some useful results in the transport equation theory in Besov spaces,  which are crucial to the proofs of our main theorems.
\begin{lemma}[\cite{64,danchin2005fourier}]\label{lem:priorieti}
Assume that $p,r\in [1,\infty]$ and  $s > -\frac{d}{p}$. Let $v$ be a vector field such that
 $\nabla v$ belongs to $L^1_T(B_{p,r}^{s-1})$ if $s>1+\frac{d}{p}$ or to $ L^1_T(B_{p,r}^{\frac{d}{p}}\cap L^\infty)$ otherwise. Suppose also that $f_0\in B_{p,r}^s$, $F\in L^1_T(B_{p,r}^s)$ and that  $f \in L^\infty_T(B_{p,r}^s)\cap C_T(\mathcal {S}')$ solves the linear transport equation
 \begin{eqnarray*}\label{eq:transeq}
(T)\quad\begin{cases}
\partial_tf+v\cdot\nabla f=F,\\
f|_{t=0}=f_0.
\end{cases}
\end{eqnarray*}
Then there is a constant $C$ depending only on $s,p$ and $r$ such that the following statements hold:

1) If $r=1$ or $s\neq 1+\frac{d}{p}$,  then
\begin{equation*}
\begin{split}
\|f(t)\|_{B_{p,r}^s}\leq  \|f_0\|_{B_{p,r}^s}+\int_0^t \|F(\tau)\|_{B_{p,r}^s}d\tau+C\int_0^t V'(\tau)\|f(\tau)\|_{B_{p,r}^s}d \tau,
\end{split}
\end{equation*}
or
\begin{equation}\label{2.1}
\begin{split}
\|f(t)\|_{B_{p,r}^s}\leq  e^{CV(t)}\left(\|f_0\|_{B_{p,r}^s}+\int_0^t e^{-CV(\tau)}\|F(\tau)\|_{B_{p,r}^s}d\tau\right)
\end{split}
\end{equation}
hold, where $ V(t)= \int_0^t\|\nabla v (\tau)\|_{B_{p,r}^{\frac{d}{p}}\bigcap L^\infty}d\tau$ if $s<1+\frac{d}{p}$ and $
V(t)=\int_0^t\|\nabla v(\tau)\|_{B_{p,r}^{s-1}}d\tau$ otherwise.

2) If $s\leq\frac{d}{p}$ and, in addition, $\nabla f_0\in L^\infty $, $\nabla f\in L^\infty _T(L^\infty)$ and $\nabla F\in L^1 _T(L^\infty)$, then
\begin{equation*}
\begin{split}
&\|f(t)\|_{B_{2,r}^s}+\|\nabla f(t)\|_{L^\infty}\\
&\quad \leq  e^{CV(t)}\left(\|f_0\|_{B_{2,r}^s}+\|\nabla f_0\|_{L^\infty}+\int_0^t e^{-CV(\tau)}(\|F(\tau)\|_{B_{2,r}^s}+\|\nabla F(\tau)\|_{L^\infty})d\tau\right).
\end{split}
\end{equation*}

3) If $f=v$, then for all $s>0$, the estimate \eqref{2.1} holds with $V(t)=\int_0^t\|\nabla v(\tau)\|_{L^\infty}d\tau$.

4) If $r<\infty$, then $f\in C_T(B_{2,r}^{s})$. If $r=\infty$, then $f\in C_T(B_{2,1}^{s'})$ for all $s'<s$.
\end{lemma}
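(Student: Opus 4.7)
The plan is to localize in frequency by applying the dyadic block $\Delta_q$ to the equation $(T)$, which yields
\begin{equation*}
\partial_t \Delta_q f + v \cdot \nabla \Delta_q f = \Delta_q F + R_q, \qquad R_q \doteq [v \cdot \nabla, \Delta_q] f.
\end{equation*}
First I would take the $L^p$ inner product with $|\Delta_q f|^{p-2}\Delta_q f$ (or integrate along characteristics when $p=\infty$), integrate by parts in the transport term to pick up only a harmless $\|\operatorname{div} v\|_{L^\infty}$ factor, and obtain the pointwise differential inequality
\begin{equation*}
\frac{d}{dt}\|\Delta_q f\|_{L^p} \leq \|\Delta_q F\|_{L^p} + \|R_q\|_{L^p} + \tfrac{1}{p}\|\operatorname{div} v\|_{L^\infty}\|\Delta_q f\|_{L^p}.
\end{equation*}
Multiplying by $2^{qs}$, taking the $\ell^r$ norm in $q$, and applying Gr\"onwall (integral form) will then give both alternative forms of the estimate in part 1, provided the commutator is controlled appropriately.

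The core of the argument is the commutator bound $\|R_q\|_{L^p} \lesssim c_q(t)\, 2^{-qs}\, V'(t)\, \|f(t)\|_{B^s_{p,r}}$ with $\|(c_q(t))\|_{\ell^r} \leq 1$. I would derive this via Bony's paraproduct decomposition $v\cdot \nabla f = T_v \nabla f + T_{\nabla f} v + R(v,\nabla f)$, split $R_q$ accordingly, and estimate each piece. The principal paraproduct term reduces to $[S_{q-1}v\cdot \nabla,\Delta_q]f$, which is handled by a first-order Taylor expansion of the convolution kernel $h_q$, yielding the crucial gain of one derivative that absorbs the $\nabla$. The remaining pieces are bounded by standard product and remainder estimates, with the constraint $s > -d/p$ entering precisely to guarantee summability in the low-frequency interactions coming from $R(v,\nabla f)$. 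Here the dichotomy in the definition of $V(t)$ reflects whether the natural norm on $\nabla v$ that makes the estimate close is $B^{d/p}_{p,r}\cap L^\infty$ (low regularity) or $B^{s-1}_{p,r}$ (high regularity).

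For part 2, since one only controls $\nabla v$ in $L^\infty$, I would differentiate the transport equation to obtain $\partial_t \nabla f + v\cdot \nabla(\nabla f) = -\nabla v\cdot \nabla f + \nabla F$ and close a coupled Gr\"onwall system on the quantities $\|f\|_{B^s_{p,r}}$ and $\|\nabla f\|_{L^\infty}$, the latter estimated along the flow of $v$. Part 3 ($f=v$) uses the cancellation $\int|\Delta_q v|^{p-2}\Delta_q v \cdot [v\cdot \nabla,\Delta_q]v\,dx$ together with the observation that only $\|\nabla v\|_{L^\infty}$ is needed because the problematic high–high interaction in the commutator is quadratic in $\nabla v$ itself. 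Part 4 follows from a density argument: regularize the data, obtain smooth solutions with obvious continuity, and pass to the limit using the quantitative estimates; the failure at $r=\infty$ is due to the loss in the $\ell^\infty$ tail, which is recovered only after a slight drop in the regularity index $s'<s$.

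The step I expect to be the main obstacle is obtaining the commutator estimate with the sharp $\ell^r$ summability in the borderline situations $s = 1 + d/p$ or $r = \infty$, where one must carefully track logarithmic losses in the paraproduct remainder; this is exactly the reason the statement of part 1 separates the two regimes by the dichotomy ``$r=1$ or $s\neq 1+d/p$''. Making the bookkeeping align with Definition \ref{def:PBesov} while preserving the correct constant $C$ depending only on $(s,p,r)$ is the technically delicate point.
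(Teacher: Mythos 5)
This lemma is quoted in the paper with citations to the references \cite{64,danchin2005fourier} and is not proved there; your outline correctly reproduces the standard argument from those sources (dyadic localization, the $L^p$ energy estimate with the $\frac1p\|\operatorname{div}v\|_{L^\infty}$ term, the commutator bound via Bony's decomposition with the $\ell^r$-summable coefficients $c_q$, and Gr\"onwall), including the correct explanation of where the restriction $s>-d/p$, the dichotomy in the definition of $V$, and the borderline condition ``$r=1$ or $s\neq 1+d/p$'' come from. No gaps; this is essentially the proof of the corresponding a priori estimates in the cited literature.
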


\begin{lemma} [\cite{64,danchin2005fourier}] \label{lem:transwell}
Let $(p,p_1,r)\in [1,\infty]^3$. Assume that $s>-d\min\{\frac{1}{p_1},\frac{1}{p'}\}$ with $p'=(1-\frac{1}{p})^{-1}$. Let $f_0\in B_{p,r}^s$ and $F\in L^1_T(B_{p,r}^s)$. Let $v \in L^\rho_T( B^{-M}_{\infty,\infty})$ for some $\rho > 1$, $M > 0$ and $\nabla v \in L^1_T(B^{\frac{d}{p_1}}_{p_1,\infty}\bigcap L^\infty)$ if $s < 1+\frac{d}{p_1}$, and $\nabla v  \in L^1_T( B^{s-1}_{p_1,r})$ if $s >  1+\frac{d}{p_1}$ or $s = 1+\frac{d}{p_1}$ and $r =1$. Then $(T)$ has a unique solution $f\in L^\infty_T( B^{s}_{p,r})\bigcap (\bigcap_{s'<s} C_T(B^{s'}_{p,1})$ and the inequalities in Lemma \ref{lem:priorieti} hold true. If, moreover, $r<\infty$, then we have $f\in  C_T(B^{s}_{p,r})$.
\end{lemma}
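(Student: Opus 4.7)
The plan is to follow the classical Friedrichs regularization scheme, using the a priori estimates already established in Lemma \ref{lem:priorieti} and handling the low regularity of the transporting field $v$ through paraproducts. I would organize the argument into uniqueness, existence by smoothing, and time-continuity.

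\textbf{Uniqueness.} If $f_1,f_2$ are two solutions in the claimed class, their difference $\delta f=f_1-f_2$ solves $(T)$ with zero initial datum and zero forcing. Because of the low regularity of $v$, one reads this equation at regularity $s-1$, where the product $v\cdot\nabla \delta f$ is well defined by the Bony decomposition thanks precisely to the hypothesis $s>-d\min\{1/p_1,1/p'\}$. Applying the first estimate of Lemma \ref{lem:priorieti} at level $s-1$ and invoking Gronwall yields $\delta f\equiv 0$.

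\textbf{Existence.} I would smooth the data by setting $v^n=S_nv$, $f_0^n=S_nf_0$ and $F^n=S_nF$, where $S_n$ is the low-frequency cutoff from Section 2. Each $v^n$ is Lipschitz in $x$, so the classical method of characteristics produces a unique smooth solution $f^n$ of the approximate equation. Lemma \ref{lem:priorieti}, together with the uniform boundedness of $S_n$ on every Besov norm involved, then gives a bound for $(f^n)$ in $L^\infty_T(B^s_{p,r})$ independent of $n$. To extract convergence I write the transport equation satisfied by $f^{n+k}-f^n$, whose source is $(v^n-v^{n+k})\cdot\nabla f^n+F^{n+k}-F^n$; applying Lemma \ref{lem:priorieti} at regularity $s-1$ and using the strong convergence of $S_nv,S_nf_0,S_nF$ proves that $(f^n)$ is Cauchy in $C_T(B^{s-1}_{p,r})$. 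Interpolating between this and the uniform bound in $B^s_{p,r}$ lifts the strong convergence to $C_T(B^{s'}_{p,1})$ for every $s'<s$. The limit $f$ inherits $L^\infty_T(B^s_{p,r})$-membership by a Fatou-type argument on the dyadic blocks, passes the equation in $\mathcal{S}'$, and satisfies the inequalities of Lemma \ref{lem:priorieti} by passing to the limit in those satisfied by the $f^n$.

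\textbf{Time continuity.} From the previous step one already has $f\in C_T(B^{s'}_{p,1})$ for every $s'<s$. When $r<\infty$, a density argument upgrades this continuity to $C_T(B^s_{p,r})$: approximate $f_0$ in $B^s_{p,r}$ by a sequence of more regular data, note that the corresponding solutions are continuous in time with values in $B^s_{p,r}$, and conclude by passing to the limit via the continuous dependence estimate in Lemma \ref{lem:priorieti}.

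\textbf{Main obstacle.} The principal technical difficulty is the very weak assumption $v\in L^\rho_T(B^{-M}_{\infty,\infty})$, which on its own does not allow one to define $v\cdot\nabla f$ pointwise. One must rely on the Bony paraproduct decomposition $v\cdot\nabla f=T_v\nabla f+T_{\nabla f}v+R(v,\nabla f)$, and the threshold $s>-d\min\{1/p_1,1/p'\}$ is exactly the level at which the remainder and the paraproduct $T_{\nabla f}v$ remain integrable in time given only the assumptions on $\nabla v$. Matching these paraproduct estimates to the allowed Besov indices uniformly along the regularization procedure, and in particular for the Cauchy estimate on $f^{n+k}-f^n$, is the core technical point; once it is set up, the rest of the proof reduces to routine applications of Lemma \ref{lem:priorieti}.
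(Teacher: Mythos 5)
The paper does not prove this lemma at all: it is quoted as a known result from the cited references \cite{64,danchin2005fourier} and used as a black box, so there is no in-paper argument to compare yours against. Your outline reproduces the standard proof from those sources (Friedrichs regularization of the data, uniform bounds via the a priori estimates of Lemma \ref{lem:priorieti}, a Cauchy-sequence argument one derivative lower, interpolation to reach $C_T(B^{s'}_{p,1})$ for $s'<s$, and a density argument for the endpoint continuity when $r<\infty$), and the strategy is sound. The one step you gloss over that requires genuine care is the convergence of the source term $(v^{n}-v^{n+k})\cdot\nabla f^{n}$ in the Cauchy estimate: since $S_n v\not\to v$ in $B^{-M}_{\infty,\infty}$ (smooth functions are not dense when the third index is $\infty$), this convergence must be extracted from the paraproduct decomposition using only the hypotheses on $\nabla v$, which is exactly how the cited references close the argument.
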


%%%%%%%%%%%%%%%%%%%%%%%%%%%%%%%%%%%%%%%%%%%%%%%%%%
\section{Proof of Theorem \ref{theorem1}for Existence}
%%%%%%%%%%%%%%%%%%%%%%%%%%%%%%%%%%%%%%%%%%%%%%%%%%
We divide the proof of the existence of solutions to  (1.5) into the following several steps.
\subsection{Approximate solution}
We construct the approximate solutions via the Friedrichs iterative method. Starting from $(m_1,n_1)\doteq (S_1 m_0,S_1n_0)$, we recursively define a sequence of functions
$(m_k,n_k)_{k\geq1}$ by solving the following linear nonlocal transport equations
\begin{equation}\label{3.1}
\begin{cases}
\partial_t m_{k+1}+\rho_k \partial_x m_{k+1}=-m_k\left(\psi_k(t,x)-\overline{\psi_k}(t)\right),&~~ t>0,~x\in\mathbb{T},\\

\partial_tn_{k+1}+\rho_k \partial_x n_{k+1}=-n_k\left(\psi_k(t,x)-\overline{\psi_k}(t)\right),&~~ t>0,~x\in\mathbb{T},\\

m_{k+1}(x,0)=S_{k+1}m_0(x),&~~ t>0,~x\in\mathbb{T},\\
n_{k+1}(x,0)=S_{k+1}n_0(x),&~~ ~x\in\mathbb{T},
\end{cases}
\end{equation}
where the transport velocity is given by $\rho_k=\partial_x^{-1}\psi_k$, and
$$
\psi_k(t,x)=(\alpha+\gamma)(v_k+\partial_x v_k)m_k-\alpha(u_k-\partial_x u_k)n_k.
$$
It follows from the definition of frequency truncation operator that $(S_{k+1}m_0,S_{k+1}n_0)\in \bigcap _{s\in \mathbb{R}}X_{s,1}$.
Assume by induction that,  given $k\in \mathbb{N}$ and $T>0$, the approximate solution $(m_k,n_k)\in L^\infty_T(X_{1/2,1})$. Since the Besov space $B_{2,1}^{1/2}$ is a Banach algebra, we have
\begin{equation}\label{3.2}
\begin{split}
\int_0^T\|\psi_k(t)\|_{B_{2,1}^{1/2}}dt\leq& \int_0^T(|\alpha(t)|+|\gamma(t)|)\|v_k+\partial_x v_k\|_{B_{2,1}^{1/2}}\|m_k\|_{B_{2,1}^{1/2}}dt\\
&+\int_0^T|\alpha(t)|\|u_k-\partial_x u_k\|_{B_{2,1}^{1/2}}\|n_k\|_{B_{2,1}^{1/2}}dt\\
\leq&\sup_{t\in [0,T]}\Big(\|m_k(t)\|_{B_{2,1}^{1/2}}^2+\|n_k(t)\|_{B_{2,1}^{1/2}}^2\Big)\int_0^T(|\alpha(t)|+|\gamma(t)|)dt< \infty,
\end{split}
\end{equation}
where we have used  $\alpha, \gamma\in L^1_{loc}([0,\infty);\mathbb{R})$. Since the operator $(1-\partial_x^2)^{-1}$ is a $S^{-2}$-multiplier (cf. Proposition 2.78 in \cite{64}), we have for any $k\geq 1$
\begin{equation}\label{3.3}
\begin{split}
\|u_k\|_{B_{2,1}^{5/2}}=\|(1-\partial_x^2)^{-1}m_k\|_{B_{2,1}^{5/2}}\approx \|m_k\|_{B_{2,1}^{1/2}}, \quad \|v_k\|_{B_{2,1}^{5/2}}=\|(1-\partial_x^2)^{-1}n_k\|_{B_{2,1}^{5/2}}\approx \|n_k\|_{B_{2,1}^{1/2}} .
\end{split}
\end{equation}
By \eqref{3.3}, one can verify that the functions $m_k \psi_k$ and $n_k \psi_k$ belong to  $L^1_T(B_{2,1}^{1/2})$. For $\overline{\psi_k}(t)$, it follows from the properties of the Littlewood-Paley decomposition operators
\begin{equation}\label{3.4}
\begin{split}
\left\| \overline{\psi_k}(t)\right\|_{B_{2,1}^{1/2}}&=\sum\limits_{q\geq-1}2^{q/2}\left\|\Delta_q\overline{\psi_k}(t)\right\|_{L^2}
\\
&=2^{-1/2}\left\|\Delta_{-1}\overline{\psi_k}(t)\right\|_{L^2} \leq2^{-1/2}\left\|\overline{\psi_k}(t)\right\|_{L^2} \leq 2^{-1/2} \|\psi_k(t)\|_{L^\infty}.
\end{split}
\end{equation}
Moreover, by using the Sobolev embedding  $B_{2,1}^{1/2}\hookrightarrow  L^\infty $, we get
\begin{equation}\label{3.5}
\begin{split}
\|\psi_k(t)\|_{L^\infty} \leq C(|\alpha(t)|+|\gamma(t)|)\|m_k\|_{L^\infty}\|n_k\|_{L^\infty} \leq C (|\alpha(t)|+|\gamma(t)|)\|m_k(t) \|_{B_{2,1}^{1/2}}\|n_k(t) \|_{B_{2,1}^{1/2}}.
\end{split}
\end{equation}
From the estimates \eqref{3.4}, \eqref{3.5} and the algebraic property of $B_{2,1}^{1/2}$, we get
\begin{equation*}
\begin{split}
\int_0^T \|m_k(t) \overline{\psi_k}(t)\|_{B_{2,1}^{1/2}}dt&\leq 2^{-1/2}\int_0^T \|m_k(t) \|_{B_{2,1}^{1/2}} \|\psi_k(t)\|_{L^\infty}dt\\
& \leq C \int_0^T  (|\alpha(t)|+|\gamma(t)|)\|m_k(t) \|_{B_{2,1}^{1/2}}^2\|n_k(t) \|_{B_{2,1}^{1/2}}dt\\
&\leq C \|m_k\|_{L^\infty_T(B^{1/2}_{2,1})}^2\|n_k\|_{L^\infty_T(B^{1/2}_{2,1})}\int_0^T  (|\alpha(t)|+|\gamma(t)|) dt <\infty,
\end{split}
\end{equation*}
which implies that $m_k  \overline{\psi_k}\in L^1_T(B^{1/2}_{2,1}) $. Similarly we also have $n_k  \overline{\psi_k}\in L^1_T(B^{1/2}_{2,1}) $. Thereby the nonlinear terms on the right hand side of system (3.1) belongs to $L^1_T(B^{1/2}_{2,1}) $.

Furthermore, since $\partial_x\rho_k=\psi_k(t,x)-\overline{\psi_k}(t)$, one can deduce from  the previous estimates that $\partial_x\rho_k \in L^1_T(B^{1/2}_{2,\infty}\cap L^\infty)$. Thanks to Lemma \ref{lem:transwell},  the system \eqref{3.1} has a unique solution $(m_{k+1},n_{k+1}) \in C_T(X_{1/2,1})$.

\subsection{Uniform bound}

For any $k\geq1$, we define
\begin{equation*}
\begin{split}
F_0\doteq\|m_0\|_{B^{1/2}_{2,1}}+\|n_0\|_{B^{1/2}_{2,1}},\quad F_k(t)\doteq \|m_{k}(t)\|_{B^{1/2}_{2,1}}+\|n_{k}(t)\|_{B^{1/2}_{2,1}},
\end{split}
\end{equation*}
and
\begin{equation*}
\begin{split}
A(s,t)\doteq\int_s^t (|\alpha(t')|+|\gamma(t')|)dt',\quad  \forall t\geq s \geq 0.
\end{split}
\end{equation*}

By applying   Lemma \ref{lem:priorieti} to the system (3.1) with respect to $m_k$ and using the fact of $\partial_x\rho_k=\psi_k(t,x)-\overline{\psi_k}(t)$, we get
\begin{equation}\label{3.6}
\begin{split}
 \|m_{k+1}(t)\|_{B^{1/2}_{2,1}}
&\leq \exp{\left\{C\int_0^t\|\psi_k(t',x)-\overline{\psi_k}(t')\|_{B^{1/2}_{2,1} }dt'\right\}}\| m_0\|_{B^{1/2}_{2,1}} \\
&+ \int_0^t  \exp{\left\{C\int_{t'}^t\|\psi_k(\tau,x)-\overline{\psi_k}(\tau)\|_{B^{1/2}_{2,1} }d\tau\right\}} \left\|m_k\left(\psi_k(t',\cdot)-\overline{\psi_k}(t')\right)\right\|_{B^{1/2}_{2,1}}dt'.
\end{split}
\end{equation}
Using the estimates \eqref{3.2}-\eqref{3.5}, we have
\begin{equation*}
\begin{split}
\int_0^t\|\psi_k(t',x)-\overline{\psi_k}(t')\|_{B^{1/2}_{2,1}}ds\leq C \int_0^t(|\alpha(t')|+|\gamma(t')|)\|m_k(t') \|_{B_{2,1}^{1/2}}\|n_k(t') \|_{B_{2,1}^{1/2}}dt',
\end{split}
\end{equation*}
and
\begin{equation*}
\begin{split}
\|m_k\left(\psi_k(t,x)-\overline{\psi_k}(t)\right)\|_{B^{1/2}_{2,1}} \leq C(|\alpha(t)|+|\gamma(t)|)\|m_k(t) \|_{B_{2,1}^{1/2}}^2\|n_k(t) \|_{B_{2,1}^{1/2}}.
\end{split}
\end{equation*}
Inserting the last two estimates into \eqref{3.6}, we get
\begin{equation}\label{3.7}
\begin{split}
\|m_{k+1}(t)\|_{B^{1/2}_{2,1}}
\leq  &\exp{\left\{CV_k(0,t)\right\}}\| m_0\|_{B^{1/2}_{2,1}} \\
&+ C\int_0^t \exp{\left\{CV_k(t',t)\right\}} (|\alpha(t')|+|\gamma(t')|)\|m_k(t')\|_{B_{2,1}^{1/2}}^2\|n_k(t')\|_{B_{2,1}^{1/2}} dt'
\end{split}
\end{equation}
with
\begin{equation*}
\begin{split}
V_k(s,t)\doteq\int_s^t(|\alpha(t')|+|\gamma(t')|)\|m_k(t')\|_{B_{2,1}^{1/2}}\|n_k(t')\|_{B_{2,1}^{1/2}}dt'.
\end{split}
\end{equation*}
Using the similar procedure to \eqref{3.1} associated with $n_{k+1}$, one can deduce that
\begin{equation}\label{3.8}
\begin{split}
\|n_{k+1}(t)\|_{B^{1/2}_{2,1}}
\leq  &\exp{\left\{CV_k(0,t)\right\}}\| n_0\|_{B^{1/2}_{2,1}} \\
&+ C\int_0^t \exp{\left\{CV_k(t',t)\right\}} (|\alpha(t')|+|\gamma(t')|)\|m_k(t')\|_{B_{2,1}^{1/2}}\|n_k(t')\|_{B_{2,1}^{1/2}}^2 dt'.
\end{split}
\end{equation}
It then follows from \eqref{3.7} and \eqref{3.8} that
\begin{equation*}
\begin{split}
F_{k+1}(t)
\leq  &\exp{\left\{CV_k(0,t)\right\}}F_0+ C\int_0^t \exp{\left\{CV_k(t',t)\right\}} (|\alpha(t')|+|\gamma(t')|)\\
&\times\Big(\|m_k(t')\|_{B_{2,1}^{1/2}}^2\|n_k(t')\|_{B_{2,1}^{1/2}}+\|m_k(t')\|_{B_{2,1}^{1/2}}
\|n_k(t')\|_{B_{2,1}^{1/2}}^2\Big) dt'\\
\leq&\exp{\left\{CV_k(0,t)\right\}}F_0+ C\int_0^t \exp{\left\{CV_k(t',t)\right\}} (|\alpha(t')|+|\gamma(t')|) F_{k}^3(t') dt',
\end{split}
\end{equation*}
which indicates the following iterative inequality
\begin{equation}\label{3.9}
\begin{split}
F_{k+1}(t)
\leq C\exp{\left\{C\int_0^t(|\alpha(t')|+|\gamma(t')|)F_k^2(t')dt'\right\}}
\left(F_0+\int_0^t (|\alpha(t')|+|\gamma(t')|) F_{k}^3(t') dt'\right).
\end{split}
\end{equation}

Notice that both the integral $V_k(s,t)$ and the iterative inequality \eqref{3.9} both contain an additional factor $|\alpha(t)|+|\gamma(t)|$, so the classic method used in \cite{danchin2001few,fuguiliuqu2013cauchy,yan2015qualitative} is inapplicable in present case. To overcome this difficult, we use another iterative method to derive the uniform bound. Without loss of generality, we  assume that the generic constant $C $ satisfies $C>1$.

For $k=1$,  and a fixed $t_0>0$, it follows from \eqref{3.9} that
\begin{equation}\label{3.10}
\begin{split}
\sup_{t\in [0,t_0]}F_{1}(t)
\leq&C\exp{\left\{C\int_0^{t_0} (|\alpha(t')|+|\gamma(t')|)\left(2C F_0\right)^2 dt'\right\}}\\
&\quad\times\left(F_0+\int_0^{t_0} (|\alpha(t')|+|\gamma(t')|)\left(2C F_0\right)^3 dt'\right)\\
\leq&2C\left(F_0+8 C^3 F_0^3 A(0,t_0)\right)\exp{\left\{4 C^3 F_0^2A(0,t_0)\right\}}
\doteq  2C\hbar(F_0),
\end{split}
\end{equation}
where
\begin{equation*}
\begin{split}
\hbar(x)= \left(x+8 C^3 x^3 A(0,t_0)\right)\exp{\left\{4 C^3 x^2A(0,t_0)\right\}},\quad  \forall x\geq0.
\end{split}
\end{equation*}
It is clear that $\hbar(0)=0$ and the function $\hbar(x) $ is a modulus of continuity defined on $\mathbb{R}^+$, which is  independent of the initial data $(m_0,n_0)$.

For $k=2$, we deduce from \eqref{3.9}  that for any $t\in [0,t_0]$
\begin{equation}\label{3.11}
\begin{split}
F_{2}(t)
\leq&C\exp{\left\{C\int_0^t(|\alpha(t')|+|\gamma(t')|)F_1^2(t')dt'\right\}}
\left(F_0+\int_0^t (|\alpha(t')|+|\gamma(t')|) F_1^3(t') dt'\right)\\
\leq& C\left(F_0+ 8C^3\hbar^3(F_0)A(0,t)\right)\exp{\left\{4C^3\hbar^2(F_0)A(0,t)\right\}}.
\end{split}
\end{equation}
Since the time-dependent functions $\alpha$ and $\gamma$ are locally Lebesgue integrable on $\mathbb{R}^+$, it follows from the absolute continuity of the integral that one can find a time $0<T^*\leq t_0$ such that
\begin{equation}\label{3.12}
\begin{split}
A(0,T^*)&\leq\frac{\ln 2}{12C^3\left(F_0+8 C^3 F_0^3 A(0,T^*)\right)\exp{\left\{4 C^3 F_0^2A(0,T^*)\right\}}}\\
&=\frac{\ln 2}{12C^3\hbar^2(F_0)}.
\end{split}
\end{equation}
Using the fact of $\hbar(F_0)\geq F_0$, we get from \eqref{3.11}-\eqref{3.12} that
\begin{equation}\label{3.13}
\begin{split}
\sup_{t\in [0,T^*]}F_{2}(t)&\leq C\left(F_0+ 8C^3 \hbar^3(F_0)A(0,T^*)\right)\exp{\left\{4C^3\hbar^2(F_0)A(0,T^*)\right\}}\\
&\leq C\hbar(F_0)\left(1+ 8C^3\hbar^2(F_0)A(0,T^*)\right)\exp{\left\{4C^3\hbar^2(F_0)A(0,T^*)\right\}}\\
&\leq C\hbar(F_0)\exp{\left\{12C^3\hbar^2(F_0)A(0,T^*)\right\}}\\
&\leq 2C\hbar(F_0),
\end{split}
\end{equation}
where the second inequality in \eqref{3.13} used the basic estimate $1+x\leq e^x$ for all $x\geq0$. Note that the estimate \eqref{3.10} remains to be true if we replace the time $t_0$ by $T^*$.

Assume inductively that, for any given $k\in \mathbb{N}$, the following estimate holds
\begin{equation*}
\begin{split}
\sup_{t\in [0,T^*]}F_{k}(t) \leq 2C\hbar(F_0).
\end{split}
\end{equation*}
Then for $F_{k+1}(t)$, it follows from \eqref{3.9} and \eqref{3.13} that
\begin{equation*}
\begin{split}
\sup_{t\in [0,T^*]}F_{k+1}(t)
\leq&C\exp{\left\{C\int_0^{T^*}(|\alpha(t')|+|\gamma(t')|)F_k^2(t')dt'\right\}}
\left(F_0+\int_0^{T^*} (|\alpha(t')|+|\gamma(t')|) F_{k}^3(t') dt'\right)\\
\leq& C\left(F_0+8C^3\hbar(F_0)^3 A(0,T^*)\right)\exp{\left\{4C^3\hbar^2(F_0)A(0,T^*)\right\}}\\
\leq& C\left(\hbar(F_0)+8C^3\hbar(F_0)^3 A(0,T^*)\right)\exp{\left\{4C^3\hbar^2(F_0)A(0,T^*)\right\}}\\
 \leq & 2C\hbar(F_0).
\end{split}
\end{equation*}
Using the mathematical induction with respect to $k$, it is easily seen that
\begin{equation*}
\begin{split}
\sup_{t\in [0,T^*]}F_{k}(t) \leq  2C\hbar(F_0)
\end{split}
\end{equation*}
holds for any $k\geq 0$, which implies the uniform bound
\begin{equation}\label{3.14}
\begin{split}
 \sup_{t\in [0,T^*]}\Big(\|m_k(t)\|_{B^{1/2}_{2,1}}+\|n_k(t)\|_{B^{1/2}_{2,1}}\Big)\leq 2C\hbar(F_0),\quad \forall k \geq 0,
\end{split}
\end{equation}

As a consequence, the approximate solutions $(m_k,n_k)_{k\geq1}$ is uniformly bounded in the space $C_{T^*}(X_{1/2,1})$. Moreover, by using the system \eqref{3.1}, one can verify that the sequence $(\partial_tm_k,\partial_tn_k)_{k\geq1}$ is uniformly bounded in $C_{T^*}(X_{-1/2,1})$. Therefore we obtain that $(m_k,n_k)_{k\geq1}$ is uniformly bounded in $ E_{2,1}^{1/2}(T^*)$.

\subsection{Convergence}
We first show that the approximate solutions $(m_k,n_k)_{k\geq1}$ is a Cauchy sequence  in $C_{T^*}(X_{-1/2,\infty})$, and then extend the convergent result to $C_{T^*}(X_{-1/2,1})$ by using an interpolation argument. To this end, we set
\begin{equation*}
\begin{split}
\mathscr{D}_{k,j}(t)\doteq \|(m_{k+j}-m_{k})(t)\|_{B^{-1/2}_{2,\infty}}+\|(n_{k+j}-n_{k})(t)\|_{B^{-1/2}_{2,\infty}},\quad \forall k,j\geq1.
\end{split}
\end{equation*}

\textbf{Claim:} For any $k,j\geq1$, there is a positive constant $C$ independent of $k,j$ such that
\begin{equation}\label{3.15}
\begin{split}
\mathscr{D}_{k+1,j}(t) \leq  e^{C\hbar^2(F_0)}\left(2^{-k} +\int_0^t   (  |\alpha(t')|+|\gamma(t')|)\mathscr{D}_{k,j}(t') \left(1
 +\log\frac{4C\hbar(F_0)}{\mathscr{D}_{k,j}(t')}\right) dt'\right).
\end{split}
\end{equation}
We define for any $k,j\in \mathbb{N}$
$$
\varphi_{k,j}(t,x)\doteq(v_k+\partial_x v_k)(m_k-m_{k+j})+\left[v_k-v_{k+j}+\partial_x (v_k-  v_{k+j})\right]m_{k+j},
$$
and
$$
\phi_{k,j}(t,x)\doteq(u_{k+j}-\partial_x u_{k+j})(n_{k+j}-n_k)+ \left[u_{k+j}-u_k-\partial_x (u_{k+j} - u_k)\right]n_k.
$$
Direct calculation shows that
\begin{equation}\label{3.16}
\begin{split}
\psi_k(t,x)-\psi_{k+j}(t,x) =(\alpha+\gamma) \varphi_{k,j}(t,x)+\alpha\phi_{k,j}(t,x).
\end{split}
\end{equation}
Using above notations, one can derive from \eqref{3.1} and \eqref{3.16} that
\begin{equation}\label{3.17}
\begin{split}
&\partial_t (m_{k+j+1}-m_{k+1})+\rho_{k+j} \partial_x( m_{k+j+1}- m_{k+1})\\
&\quad=(m_k-m_{k+j})\left(\psi_k(t,x)-\overline{\psi_k}(t)\right)+(\alpha+\gamma)m_{k+j}\varphi_{k,j}(t,x)+\alpha m_{k+j}\phi_{k,j}(t,x)\\
&\qquad-\alpha m_{k+j}\overline{\phi_{k,j}(t,x)} -(\alpha+\gamma)m_{k+j}\overline{\varphi_{k,j}(t,x)}-(\rho_{k+j}-\rho_k) \partial_x m_{k+1}\\
&\quad \doteq \mathcal {F}_1(u_k,u_{j+k},v_j,v_{j+k}),
\end{split}
\end{equation}
and
\begin{equation}\label{3.18}
\begin{split}
&\partial_t(n_{k+j+1}-n_{k+1})+\rho_{k+j} \partial_x( n_{k+j+1}- n_{k+1})\\
&\quad =(n_k-n_{k+j})\left(\psi_k(t,x)-\overline{\psi_k}(t)\right)+(\alpha+\gamma)n_{k+j}\varphi_{k,j}(t,x)+\alpha n_{k+j}\phi_{k,j}(t,x)\\
&\qquad-(\alpha+\gamma)n_{k+j}\overline{\varphi_{k,j}(t,x)} -\alpha n_{k+j}\overline{\phi_{k,j}(t,x)}-(\rho_{k+j}-\rho_k) \partial_x n_{k+1}\\
&\quad \doteq \mathcal {F}_2(u_k,u_{j+k},v_j,v_{j+k}).
\end{split}
\end{equation}
Applying Lemma \ref{lem:priorieti} to Eq.\eqref{3.17} leads to
\begin{equation}\label{3.19}
\begin{split}
 &\|(m_{k+j+1}-m_{k+1})(t)\|_{B^{-1/2}_{2,\infty}}\leq \|S_{k+j+1}m_0-S_{k+1}m_0\|_{B^{-1/2}_{2,\infty}}\\
&\qquad +C\int_0^t\|\psi_{k+j}(t',\cdot)-\overline{\psi_{k+j}}(t')\|_{B^{1/2}_{2,\infty}\cap L^\infty}\|(m_{k+j+1}-m_{k+1})(t')\|_{B^{-1/2}_{2,\infty}}dt'\\
 &\qquad +\int_0^t\|\mathcal {F}_1(u_k,u_{j+k},v_j,v_{j+k})\|_{B^{-1/2}_{2,\infty}}dt'.
\end{split}
\end{equation}
According to the definition of the Littlewood-Paley blocks $\Delta_p$ and the almost orthogonal property $\Delta_p\Delta_q=0$ for $|p-q|\geq 2$, we have
\begin{equation}\label{3.20}
\begin{split}
 \|S_{k+j+1}m_0-S_{k+1}m_0\|_{B^{-1/2}_{2,\infty}}& =\sup_{p\geq -1}2^{-p/2}\left\|\Delta_p\sum_{k+1\leq q \leq k+j}\Delta_q m_0\right\|_{L^2}\\
 &\leq\sum_{k\leq p \leq k+j+1}\left(2^{-p}2^{p/2}\sum_{|p-q|\leq1}\left\|\Delta_p\Delta_q m_0\right\|_{L^2}\right)\\
 &\leq C 2^{-k}\sum_{k\leq p \leq k+j+1}2^{p/2} \left\|\Delta_p m_0\right\|_{L^2} \leq C 2^{-k} \|m_0\|_{B_{2,1}^{1/2}} .
\end{split}
\end{equation}
By the estimates \eqref{3.2}, \eqref{3.4} and \eqref{3.5}, we have
\begin{equation}\label{3.21}
\begin{split}
\|\psi_{k+j}(t,\cdot)-\overline{\psi_{k+j}}(t)\|_{B^{1/2}_{2,\infty}\cap L^\infty}
&\leq C(\|\psi_{k+j}(t,\cdot)\|_{B^{1/2}_{2,1}}+\|\psi_{k+j}(t,\cdot)\|_{L^\infty})\\
&\leq C (|\alpha(t)|+|\gamma(t)|)\Big(\|m_{k+j} \|_{B_{2,1}^{1/2}}^2+\|n_{k+j}\|_{B_{2,1}^{1/2}}^2\Big)\\
&\leq C\hbar^2(F_0)(|\alpha(t)|+|\gamma(t)|),
\end{split}
\end{equation}
where the last inequality used the uniform bound for  solutions $(m_k,n_k)$ in $X_{1/2,1}$. Now let us estimate the nonlinear terms involved in $\mathcal {F}_1(u_k,u_{j+k},v_j,v_{j+k})$. To this end, we need the following bilinear estimates in Besov spaces.
\begin{lemma}[Moser estimate \cite{danchin2001few}] \label{moser}
Let
$s_1\leq 1/p<s_2$ $(s_2\geq 1/p$ if $r=1$) and $s_1+s_2>0$, then
\begin{equation*}
\begin{split}
\|fg\|_{B^{s_1}_{p,r}}\leq  C\|f\|_{B^{s_1}_{p,r}}\|g\|_{B^{s_2}_{p,r}}.
\end{split}
\end{equation*}
\end{lemma}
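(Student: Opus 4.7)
My plan is to establish the estimate through Bony's paraproduct decomposition, splitting
$$
fg = T_f g + T_g f + R(f,g),
$$
with $T_f g\doteq\sum_q S_{q-1}f\,\Delta_q g$ and $R(f,g)\doteq\sum_q\sum_{|q-q'|\leq 1}\Delta_q f\,\Delta_{q'}g$, and bounding each of the three pieces separately against $\|f\|_{B^{s_1}_{p,r}}\|g\|_{B^{s_2}_{p,r}}$ under the conditions $s_1\leq 1/p$, $s_2>1/p$ (with $s_2\geq 1/p$ if $r=1$), and $s_1+s_2>0$. The decomposition converts a single product estimate into three essentially disjoint estimates on frequency-localized pieces, each of which can be handled by standard almost-orthogonality arguments.

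The paraproduct $T_g f$ is the easiest: since $s_2>1/p$, the Besov embedding $B^{s_2}_{p,r}\hookrightarrow L^\infty$ applies, and the standard continuity of the paraproduct operator $L^\infty\times B^{s_1}_{p,r}\to B^{s_1}_{p,r}$ (valid for any real $s_1$) yields
$$
\|T_g f\|_{B^{s_1}_{p,r}}\leq C\|g\|_{L^\infty}\|f\|_{B^{s_1}_{p,r}}\leq C\|g\|_{B^{s_2}_{p,r}}\|f\|_{B^{s_1}_{p,r}}.
$$
The paraproduct $T_f g$ is handled analogously, but since $f$ need not be bounded I would use the continuity $B^{s_1-1/p}_{\infty,r}\times B^{s_2}_{p,r}\to B^{s_1+s_2-1/p}_{p,r}$ combined with the embeddings $B^{s_1}_{p,r}\hookrightarrow B^{s_1-1/p}_{\infty,r}$ and $B^{s_1+s_2-1/p}_{p,r}\hookrightarrow B^{s_1}_{p,r}$; the latter is where the endpoint case $s_2\geq 1/p$ (and the restriction $r=1$) enters.

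The main obstacle is the remainder $R(f,g)$, where the Fourier supports of $\Delta_q f$ and $\tilde\Delta_q g$ essentially coincide and can produce arbitrarily low frequencies. By almost-orthogonality only indices with $q\geq l-N_0$ contribute to $\Delta_l R(f,g)$, and H\"older together with Bernstein gives
$$
\|\Delta_l R(f,g)\|_{L^p}\leq C\sum_{q\geq l-N_0}2^{q/p}\|\Delta_q f\|_{L^p}\|\tilde\Delta_q g\|_{L^p}.
$$
Factoring out the Besov weights $2^{qs_1}\|\Delta_q f\|_{L^p}=d_q\|f\|_{B^{s_1}_{p,r}}$ and $2^{qs_2}\|\tilde\Delta_q g\|_{L^p}=c_q\|g\|_{B^{s_2}_{p,r}}$ with $(c_q),(d_q)\in l^r$, and multiplying by $2^{ls_1}$, the remaining series reduces to a geometric sum in $2^{q(1/p-s_1-s_2)}$ whose tail convergence hinges exactly on the hypothesis $s_1+s_2>0$ (with $s_2\geq 1/p$ absorbing the Bernstein loss). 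A discrete Young inequality then delivers the required $l^r$ bound on $(2^{ls_1}\|\Delta_l R\|_{L^p})_{l\geq-1}$, and collecting the three estimates completes the proof.
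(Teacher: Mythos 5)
The paper offers no proof of this lemma at all --- it is imported verbatim from Danchin's work as a known product law --- so your argument has to be judged on its own terms rather than against anything in the text. Your overall architecture (Bony decomposition, separate treatment of $T_g f$, $T_f g$ and $R(f,g)$) is the standard and correct one, and the two paraproduct terms are handled essentially correctly: $T_g f$ via the embedding $B^{s_2}_{p,r}\hookrightarrow L^\infty$ and the continuity $L^\infty\times B^{s_1}_{p,r}\to B^{s_1}_{p,r}$, and $T_f g$ via $B^{s_1}_{p,r}\hookrightarrow B^{s_1-1/p}_{\infty,r}$. (One caveat: at the endpoint $s_1=1/p$ with $r>1$, the continuity $B^{0}_{\infty,r}\times B^{s_2}_{p,r}\to B^{s_2}_{p,r}$ you invoke is false, since $\|S_{q-1}f\|_{L^\infty}$ may grow like $q^{1-1/r}$; there one must instead use the strict gap $s_2>s_1$ in the target regularity to absorb that logarithm.)

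The genuine gap is in the remainder. Your bound $\|\Delta_l R(f,g)\|_{L^p}\leq C\sum_{q\geq l-N_0}2^{q/p}\|\Delta_q f\|_{L^p}\|\tilde\Delta_q g\|_{L^p}$ is true but too lossy: after factoring out the Besov weights, the kernel of the resulting convolution is $2^{(q-l)(1/p-s_1-s_2)}$ up to the harmless factor $2^{l(1/p-s_2)}\leq C$, and this is summable over $q\geq l-N_0$ only when $s_1+s_2>1/p$. Your own phrasing betrays the problem: a geometric sum with ratio $2^{q(1/p-s_1-s_2)}$ converges precisely when $s_1+s_2>1/p$, not when $s_1+s_2>0$, and the clause about ``$s_2\geq 1/p$ absorbing the Bernstein loss'' only removes the factor $2^{q(1/p-s_2)}$, leaving $2^{(l-q)s_1}$, which still diverges along $q\to\infty$ whenever $s_1<0$. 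This is exactly the regime in which the paper applies the lemma ($p=2$, $s_1=-1/2$, $s_2=1/2+\epsilon$, so that $s_1+s_2=\epsilon<1/2=1/p$). The repair is to apply Bernstein at the output frequency rather than the input frequency: since $\Delta_l$ restricts to frequencies of size about $2^l$ and $2^l\lesssim 2^q$ throughout the remainder, one has for $p\geq 2$ the bound $\|\Delta_l(\Delta_q f\,\tilde\Delta_q g)\|_{L^p}\leq C2^{l/p}\|\Delta_q f\,\tilde\Delta_q g\|_{L^{p/2}}\leq C2^{l/p}\|\Delta_q f\|_{L^p}\|\tilde\Delta_q g\|_{L^p}$. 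The factor $2^{l/p}$ in place of $2^{q/p}$ turns the convolution kernel into $2^{(l-q)(s_1+s_2)}$ (times $2^{l(1/p-s_2)}\leq C$), which lies in $l^1$ over $\{q\geq l-N_0\}$ precisely under the stated hypothesis $s_1+s_2>0$; the discrete Young inequality then closes the argument as you intended.
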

\begin{lemma} [Endpoint bilinear estimate \cite{1}]\label{endpoint}
For any $p\geq2$, the paraproduct is continuous from $B_{p,1}^{-1/p}\times (B_{p,1}^{1/p}\cap L^\infty)$ into $B_{p,1}^{-1/p}$, that is, there is some $C>0$ such that
\begin{equation*}
\begin{split}
\|fg\|_{B^{-1/p}_{p,\infty}}\leq C\|f\|_{B^{-1/p}_{p,1}}\|g\|_{B^{1/p}_{2,\infty}\cap L^\infty}.
\end{split}
\end{equation*}
\end{lemma}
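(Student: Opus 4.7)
My plan is to prove the endpoint bilinear estimate through Bony's paraproduct decomposition $fg = T_f g + T_g f + R(f,g)$, where $T_f g \doteq \sum_q S_{q-1} f\,\Delta_q g$ is the paraproduct and $R(f,g) \doteq \sum_{|p-q|\leq 1}\Delta_p f\,\Delta_q g$ is the remainder, and to estimate each of the three pieces separately in the target space $B_{p,\infty}^{-1/p}$. (I will read the subscript $2$ in $B_{2,\infty}^{1/p}\cap L^\infty$ as a typo for $p$, since only that choice is scaling-consistent with the claim.) Two structural features drive the argument. First, each dyadic block of a paraproduct or remainder is spectrally localized in a prescribed set, so almost-orthogonality dramatically restricts which pairs $(p,q)$ contribute to a fixed $\Delta_{p'}(\cdot)$. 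Second, the $\ell^1$-summability built into $\|f\|_{B_{p,1}^{-1/p}}$ is precisely what compensates for the critical regularity deficit $s_1+s_2=0$ once the target norm is relaxed from $\ell^1$ to $\ell^\infty$.

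The two paraproducts are the easier pieces. For $T_g f$, the block $\Delta_{p'}(T_g f)$ only receives contributions from indices with $|p'-q|\leq N_0$; using $\|S_{q-1}g\|_{L^\infty}\leq \|g\|_{L^\infty}$ and H\"older's inequality yields
\begin{equation*}
2^{-p'/p}\|\Delta_{p'}(T_g f)\|_{L^p}\leq C\|g\|_{L^\infty}\sum_{|p'-q|\leq N_0}2^{-q/p}\|\Delta_q f\|_{L^p}\leq C\|g\|_{L^\infty}\|f\|_{B_{p,1}^{-1/p}},
\end{equation*}
uniformly in $p'$. For the symmetric paraproduct $T_f g$, I apply Bernstein's inequality to $S_{q-1}f$ to convert its negative regularity into a benign frequency factor $2^{q/p}\|S_{q-1}f\|_{L^p}$, which pairs with $\|\Delta_q g\|_{L^p}\leq C\,2^{-q/p}\|g\|_{B_{p,\infty}^{1/p}}$ to produce a summable diagonal.

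The main obstacle will be the remainder $R(f,g)$, where the endpoint character of the estimate bites hardest. Since $\widehat{\Delta_p f\,\Delta_q g}$ with $|p-q|\leq 1$ is supported in a \emph{ball} of radius $\sim 2^p$ rather than an annulus, a single low block $\Delta_{p'}R(f,g)$ absorbs contributions from every pair $(p,q)$ with $p\geq p'-N_0$, and this is exactly where the standard hypothesis $s_1+s_2>0$ would normally be invoked. Bounding pointwise by $\|\Delta_p f\|_{L^p}\|\Delta_q g\|_{L^\infty}$ and regrouping, the geometric factor $2^{(p'-p)/p}\leq 1$ coming from the $2^{-p'/p}$ in the target norm multiplies the tail $\sum_{p\geq p'-N_0}2^{-p/p}\|\Delta_p f\|_{L^p}$, which is absorbed by $\|f\|_{B_{p,1}^{-1/p}}$ irrespective of how small $p'$ is. Taking the supremum over $p'$ then delivers the uniform estimate in the target $\ell^\infty$ norm. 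Combining the three bounds yields the desired inequality; the only subtlety left is to treat the low-frequency block $\Delta_{-1}$ separately, which is routine since $B_{p,1}^{-1/p}\hookrightarrow L^p$ and the product can be absorbed directly via H\"older against $\|g\|_{L^\infty}$.
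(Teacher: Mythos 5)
The paper itself offers no proof of this lemma; it is quoted as a known endpoint estimate (it goes back to Danchin's work on the Camassa--Holm equation, despite the mis-citation), so your proposal can only be judged on its own terms. The decomposition into $T_g f$, $T_f g$ and $R(f,g)$ is the right framework, and your treatment of the two paraproducts is essentially correct. The gap is in the remainder, and it is genuine. Writing $j$ for the inner dyadic index and $p'$ for the output index, you must control $2^{-p'/p}\sum_{j\ge p'-N_0}\|\Delta_j f\|_{L^p}\,\|\widetilde{\Delta}_j g\|_{L^\infty}$. Setting $c_j=2^{-j/p}\|\Delta_j f\|_{L^p}$, so that $(c_j)\in\ell^1$ with $\sum_j c_j=\|f\|_{B^{-1/p}_{p,1}}$, the weight that actually appears is $2^{-p'/p}\|\Delta_j f\|_{L^p}=2^{(j-p')/p}c_j$, and on the tail $j\ge p'-N_0$ the factor $2^{(j-p')/p}$ is bounded \emph{below} by $2^{-N_0/p}$ but grows without bound as $j\to\infty$: your claim that the geometric factor is $\le 1$ has the sign of the exponent reversed. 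Since $\|\widetilde{\Delta}_j g\|_{L^\infty}\le C\|g\|_{L^\infty}$ carries no decay in $j$, the series $\sum_{j\ge p'-N_0}2^{(j-p')/p}c_j$ need not converge for $(c_j)\in\ell^1$, and the remainder estimate as you argue it fails.

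The repair is to pair $\|\Delta_j f\|_{L^p}$ with $\|\widetilde{\Delta}_j g\|_{L^p}\le C2^{-j/p}\|g\|_{B^{1/p}_{p,\infty}}$, whose decay exactly cancels the growth $\|\Delta_j f\|_{L^p}=2^{j/p}c_j$: H\"older then gives $\|\Delta_{p'}R(f,g)\|_{L^{p/2}}\le C\|g\|_{B^{1/p}_{p,\infty}}\sum_{j\ge p'-N_0}c_j\le C\|f\|_{B^{-1/p}_{p,1}}\|g\|_{B^{1/p}_{p,\infty}}$, and Bernstein's inequality on the spectrally localized output block converts this back via $\|\Delta_{p'}R(f,g)\|_{L^{p}}\le C2^{p'/p}\|\Delta_{p'}R(f,g)\|_{L^{p/2}}$, the loss $2^{p'/p}$ being absorbed precisely by the weight $2^{-p'/p}$ of the target norm $B^{-1/p}_{p,\infty}$. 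This is where the hypothesis $p\ge 2$ enters (so that $L^{p/2}$ is a normed space); your argument never uses $p\ge 2$, which is a symptom of the missing step. Two smaller points: in the $T_f g$ estimate, the phrase ``$2^{q/p}\|S_{q-1}f\|_{L^p}$ pairs with $\|\Delta_q g\|_{L^p}$'' also lands you in $L^{p/2}$ and needs the same Bernstein step (or else use $\|S_{q-1}f\|_{L^\infty}\le C2^{q/p}\|S_{q-1}f\|_{L^p}$ and H\"older against $\|\Delta_q g\|_{L^p}$); and the closing claim $B^{-1/p}_{p,1}\hookrightarrow L^p$ is false for a space of negative regularity, though no separate low-frequency argument is actually needed once the three pieces are estimated correctly.
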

For the first term in $\mathcal {F}_1$, by  using \eqref{3.21} and Lemma \ref{endpoint} with $p=2$, we have
\begin{equation}\label{3.22}
\begin{split}
&\|(m_k-m_{k+j})\left(\psi_k(t,x)-\overline{\psi_k}(t)\right)\|_{B^{-1/2}_{2,\infty}}\\
&\quad \leq C \|m_k-m_{k+j}\|_{B^{-1/2}_{2,1}} \|\psi_k(t,\cdot)-\overline{\psi_k}(t)\|_{B^{1/2}_{2,\infty}\cap L^\infty}\\
&\quad \leq C \hbar^2(F_0)(|\alpha(t)|+|\gamma(t)|)\|m_k-m_{k+j}\|_{B^{-1/2}_{2,1}}.
\end{split}
\end{equation}
For the second term, by suitably choosing $s_1$ and $s_2$ in Lemma \ref{moser}, and using the norm-equivalence \eqref{3.3} as well as the uniform bound for approximate solutions, we have for any $\epsilon\in (0,1)$ that
\begin{eqnarray}\label{3.23}
\begin{split}
\|(\alpha+\gamma)m_{k+j}\varphi_{k,j}\|_{B^{-1/2}_{2,\infty}}&\leq  C(|\alpha(t)|+|\gamma(t)|)\|m_{k+j}\|_{B^{1/2}_{2,\infty}\cap L^\infty}\|\varphi_{k,j}(t,\cdot)\|_{B^{-1/2}_{2,1}}\\
&\leq C\hbar(F_0)(|\alpha(t)|+|\gamma(t)|) \Big(\|(v_k+\partial_x v_k)(m_k-m_{k+j})\|_{B^{-1/2}_{2,1}}\\
&\quad +\|[v_k-v_{k+j}+\partial_x (v_k-  v_{k+j})]m_{k+j}\|_{B^{-1/2+\epsilon}_{2,1}}\Big)\\
&\leq C\hbar(F_0)(|\alpha(t)|+|\gamma(t)|) \Big(\|v_k+\partial_x v_k\|_{B^{1/2+\epsilon}_{2,1}}\|m_k-m_{k+j}\|_{B^{-1/2}_{2,1}}\\
&\quad +\|[v_k-v_{k+j}+\partial_x (v_k-  v_{k+j})]\|_{B^{-1/2+\epsilon}_{2,1}}\|m_{k+j}\|_{B^{1/2}_{2,1}}\Big)\\
&\leq C\hbar^2(F_0)(|\alpha(t)|+|\gamma(t)|) \Big(\|m_k-m_{k+j}\|_{B^{-1/2}_{2,1}} +\|n_k-  n_{k+j}\|_{B^{-1/2}_{2,1}}\Big).
\end{split}
\end{eqnarray}
The third term can be estimated as
\begin{equation}\label{3.24}
\begin{split}
\|\alpha m_{k+j}\phi_{k,j}\|_{B^{-1/2}_{2,\infty}}
&\leq C\hbar(F_0)|\alpha(t)|\Big(\|(u_{k+j}-\partial_x u_{k+j})(n_{k+j}-n_k)\|_{B^{-1/2}_{2,1}}\\
&\quad + \|[u_{k+j}-u_k-\partial_x (u_{k+j} - u_k)]n_k\|_{B^{-1/2+\epsilon}_{2,1}}\Big)\\
&\leq C\hbar(F_0)|\alpha(t)|\Big[\Big(\|u_{k+j}\|_{B^{1/2+\epsilon}_{2,1}}+\|u_{k+j}\|_{B^{3/2+\epsilon}_{2,1}}\Big)\|n_{k+j}-n_k\|_{B^{-1/2}_{2,1}}\\
&\quad+  \Big(\|u_{k+j}-u_k\|_{B^{-1/2+\epsilon}_{2,1}}+\|u_{k+j} - u_k\|_{B^{1/2+\epsilon}_{2,1}}\Big)\|n_k\|_{B^{1/2}_{2,1}} \Big] \\
&\leq C\hbar^2(F_0)|\alpha(t)|\Big(\|m_k-m_{k+j}\|_{B^{-1/2}_{2,1}} +\|n_k-  n_{k+j}\|_{B^{-1/2}_{2,1}}\Big).
\end{split}
\end{equation}
For the forth term, since the torus $\mathbb{T}$ is a compact set, it follows from \eqref{3.4} and Lemma \ref{moser} that
\begin{eqnarray}\label{3.25}
\begin{split}
\|\alpha m_{k+j}\overline{\phi_{k,j}}\|_{B^{-1/2}_{2,\infty}} &\leq C|\alpha(t)|\|m_{k+j}\|_{B^{1/2}_{2,\infty}\cap L^\infty}\|\overline{\phi_{k,j}}\|_{B^{-1/2}_{2,1}} \\
&\leq C\hbar(F_0)|\alpha(t)|\left|\int_\mathbb{T}\phi_{k,j}(t,x)ds\right|\\
&\leq C\hbar(F_0)|\alpha(t)| \left( \left|\langle u_{k+j}-\partial_x u_{k+j},n_{k+j}-n_k \rangle\right|+\left|\langle u_{k+j}-u_k-\partial_x (u_{k+j} - u_k),n_k \rangle\right| \right).
\end{split}
\end{eqnarray}
To estimate the last two terms occurring in the right hand side of \eqref{3.25}, we first observe from the uniform bounded  of $(m_k,n_k)_{k\geq -1}$ that $u_{k+j}-\partial_x u_{k+j}\in B_{2,1}^{3/2}$, $n_{k+j}-n_k\in B_{2,1}^{1/2} \hookrightarrow B_{2,\infty}^{-3/2}=(B_{2,1}^{3/2})'$, where $X'$ denotes the duality of the space $X$. Since the Schwartz space $\mathcal {S}$ is dense in $B_{2,1}^{3/2}$, by using a density argument and the Littlewood-Paley decomposition operators $(\Delta_k)_{k\geq -1}$ together with the H\"{o}lder's inequality, we have
\begin{eqnarray}\label{3.26}
\begin{split}
\left|\langle u_{k+j}-\partial_x u_{k+j},n_{k+j}-n_k \rangle\right|&\leq\sum_{|l-l'|\leq 2}\left|\int_\mathbb{T}\Delta_{l}(u_{k+j}-\partial_x u_{k+j}) \cdot\Delta_{l'}(n_{k+j}-n_k)dx\right|\\
 & \leq 2^{3}\sum_{|l-l'|\leq 2}\left( 2^{\frac{3l}{2}}\|\Delta_{l}(u_{k+j}-\partial_x u_{k+j})\|_{L^2} \cdot 2^{-\frac{3l'}{2}}\|\Delta_{l'}(n_{k+j}-n_k)\|_{L^2}\right)\\
&\leq 2^{3}\sup_{l'\geq -1}2^{-\frac{3l'}{2}}\|\Delta_{l'}(n_{k+j}-n_k)\|_{L^2} \cdot\sum_{l\geq -1} 2^{\frac{3l}{2}}\|\Delta_{l}(u_{k+j}-\partial_x u_{k+j})\|_{L^2}\\
&= 2^{3}\|u_{k+j}-\partial_x u_{k+j}\|_{B_{2,1}^{3/2}}\|n_{k+j}-n_k\|_{B_{2,\infty}^{-3/2}}\\
&\leq C\Big(\|u_{k+j}\|_{B_{2,1}^{3/2}}+\| u_{k+j}\|_{B_{2,1}^{5/2}}\Big)\|n_{k+j}-n_k\|_{B_{2,1}^{-1/2}}\\
&\leq C \hbar(F_0)\|n_k-  n_{k+j}\|_{B^{-1/2}_{2,1}},
\end{split}
\end{eqnarray}
where the last two inequality used \eqref{3.3} and the uniform bound for approximate solutions. For the second integral on the right hand side of \eqref{3.25}, we have
\begin{equation}\label{3.27}
\begin{split}
&\left|\langle u_{k+j}-u_k-\partial_x (u_{k+j} - u_k),n_k \rangle\right| \\
&\quad = \left|\sum_{|l-l'|\leq 2}\int_\mathbb{T} \Delta_{l}\left(u_{k+j}-u_k-\partial_x (u_{k+j} - u_k)\right)\cdot \Delta_{l'}n_k dx\right|\\
 &\quad\leq 2 \sum_{|l-l'|\leq 2}2^{\frac{l}{2}}\left\|\Delta_{l}\left(u_{k+j}-u_k-\partial_x (u_{k+j} - u_k)\right)\right\|_{L^2}2^{-\frac{l'}{2}}\cdot\|\Delta_{l'}n_k\|_{L^2}\\
&\quad\leq C\Big(\|u_{k+j}-u_k\|_{B_{2,1}^{1/2}}+\| \partial_x (u_{k+j} - u_k)\|_{B_{2,1}^{1/2}}\Big)\|n_{k+j}-n_k\|_{B_{2,\infty}^{-1/2}}\\
&\quad\leq C\Big(\|n_{k+j}\|_{B_{2,1}^{1/2}}+\|n_k\|_{B_{2,1}^{1/2}}\Big) \Big(\|m_{k+j}-m_k\|_{B_{2,1}^{-3/2}}+\| m_{k+j}-m_k\|_{B_{2,1}^{-1/2}}\Big)\\
&\quad \leq C\hbar(F_0)\|m_k-  m_{k+j}\|_{B^{-1/2}_{2,1}}.
\end{split}
\end{equation}
Inserting the estimates \eqref{3.26} and \eqref{3.27} into \eqref{3.25}, we get
\begin{equation}\label{3.28}
\begin{split}
\left\|\alpha m_{k+j}\overline{\phi_{k,j}}\right\|_{B^{-1/2}_{2,\infty}} \leq C\hbar^2(F_0)|\alpha(t)| \Big(\|m_k-m_{k+j}\|_{B^{-1/2}_{2,1}} +\|n_k-  n_{k+j}\|_{B^{-1/2}_{2,1}}\Big).
\end{split}
\end{equation}
For the fifth term in $\mathcal {F}_1$, we have
\begin{equation}\label{3.29}
\begin{split}
\left\|(\alpha+\gamma)m_{k+j}\overline{\varphi_{k,j}}\right\|_{B^{-1/2}_{2,\infty}} \leq C\hbar^2(F_0)(|\alpha(t)|+|\gamma(t)|) \Big(\|m_k-m_{k+j}\|_{B^{-1/2}_{2,1}} +\|n_k-  n_{k+j}\|_{B^{-1/2}_{2,1}}\Big).
\end{split}
\end{equation}
For the last term in $\mathcal {F}_1$, by choosing
$$
p=2,\quad r=1,\quad s_1=- \frac{1}{2}  \quad\mbox{and}\quad s_2=\frac{1}{2}+\epsilon , ~~\forall \epsilon>0,
$$
it then follows from Lemma \ref{moser}  that
\begin{eqnarray}\label{3.30}
\begin{split}
\|(\rho_{k+j}-\rho_k) \partial_x m_{k+1}\|_{B^{-1/2}_{2,\infty}}
&\leq C\| m_{k+1}\|_{ B^{1/2}_{2,1} }\|(\alpha+\gamma) \varphi_{k,j}(t,x)+\alpha\phi_{k,j}(t,x)\|_{B^{-1/2}_{2,1}}\\
 &\leq C\hbar(F_0)(|\alpha(t)|+|\gamma(t)|) \Big(\|v_k+\partial_x v_k\|_{B^{1/2+\epsilon}_{2,1}}\|m_{k+j}-m_k\|_{B^{-1/2}_{2,1}} \\
&\quad+\|v_k-v_{k+j}+\partial_x (v_k-  v_{k+j})\|_{B^{1/2}_{2,1}} \|m_{k+j} \|_{B^{1/2}_{2,1}} \\
&\quad+ \|u_{k+j}-\partial_x u_{k+j}\|_{B^{1/2+\epsilon}_{2,1}}\|n_{k+j}-n_k\|_{B^{-1/2}_{2,1}}\\
&\quad+ \|u_{k+j}-u_k-\partial_x (u_{k+j} - u_k)\|_{B^{1/2}_{2,1}}\|n_k \|_{B^{1/2}_{2,1}} \Big)\\
&\leq C\hbar^2(F_0)(|\alpha(t)|+|\gamma(t)|) \Big(\|m_k-m_{k+j}\|_{B^{-1/2}_{2,1}} +\|n_k-  n_{k+j}\|_{B^{-1/2}_{2,1}}\Big).
\end{split}
\end{eqnarray}
Combining the estimates \eqref{3.20}, \eqref{3.22}-\eqref{3.24}, \eqref{3.28}-\eqref{3.30}, we obtain
\begin{equation*}
\begin{split}
 &\|\mathcal {F}_1(u_k,u_{j+k},v_j,v_{j+k})\|_{B^{-1/2}_{2,\infty}}\leq C\hbar^2(F_0)(|\alpha(t)|+|\gamma(t)|) \Big(\|m_k-m_{k+j}\|_{B^{-1/2}_{2,1}} +\|n_k-  n_{k+j}\|_{B^{-1/2}_{2,1}}\Big),
\end{split}
\end{equation*}
which together with \eqref{3.19} yield that
\begin{eqnarray}\label{3.31}
\begin{split}
\quad \quad &\|(m_{k+j+1}-m_{k+1})(t)\|_{B^{-1/2}_{2,\infty}}\\
 &\quad\leq C F_02^{-k}+C\hbar^2(F_0)\int_0^t(|\alpha(t')|+|\gamma(t')|)\|(m_{k+j+1}-m_{k+1})(t')\|_{B^{-1/2}_{2,\infty}}dt'\\
 &\quad \quad +C\hbar^2(F_0)\int_0^t(|\alpha(t')|+|\gamma(t')|) \Big(\|(m_{k+j}-m_{k})(t')\|_{B^{-1/2}_{2,1}} +\|(n_{k+j}-n_{k})(t')\|_{B^{-1/2}_{2,1}}\Big)dt'.
\end{split}
\end{eqnarray}
Now we need the following lemma, which provides a characterization for the difference between the Besov spaces $B_{2,1}^s$ and $B_{2,\infty}^s$.
\begin{lemma}[Log-type interpolation inequality \cite{danchin2005fourier}] \label{logestimate}
For any $s\in \mathbb{R}$, $\delta>0$ and $1\leq p \leq \infty$, we have for some constant $C>0$
\begin{equation*}
\begin{split}
 \|f\|_{B^{s}_{p,1}}\leq C\frac{1+\delta}{\delta} \|f\|_{B^{s}_{p,\infty}}\left(1+\log \frac{\|f\|_{B^{s+\delta}_{p,\infty}}}{\|f\|_{B^{s}_{p,\infty}}} \right).
\end{split}
\end{equation*}
\end{lemma}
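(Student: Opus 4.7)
The plan is to split the Besov norm dyadically at a level $N$ to be optimized, handling low frequencies by brute force and high frequencies by trading derivatives against the $B^{s+\delta}_{p,\infty}$ norm. Write
\begin{equation*}
\|f\|_{B^{s}_{p,1}}=\sum_{q=-1}^{N}2^{qs}\|\Delta_q f\|_{L^p}+\sum_{q>N}2^{qs}\|\Delta_q f\|_{L^p}\doteq I+II.
\end{equation*}
For the low-frequency part, using $2^{qs}\|\Delta_q f\|_{L^p}\leq \|f\|_{B^s_{p,\infty}}$ for every $q$, I get $I\leq (N+2)\|f\|_{B^s_{p,\infty}}$. For the high-frequency part, inserting the factor $2^{-q\delta}\cdot 2^{q\delta}$,
\begin{equation*}
II=\sum_{q>N}2^{-q\delta}\bigl(2^{q(s+\delta)}\|\Delta_q f\|_{L^p}\bigr)\leq \|f\|_{B^{s+\delta}_{p,\infty}}\sum_{q>N}2^{-q\delta}=\|f\|_{B^{s+\delta}_{p,\infty}}\,\frac{2^{-(N+1)\delta}}{1-2^{-\delta}}.
\end{equation*}
The elementary inequality $1-2^{-\delta}\geq \frac{\delta\ln 2}{1+\delta\ln 2}$ (valid for $\delta>0$) yields $\frac{1}{1-2^{-\delta}}\leq C\frac{1+\delta}{\delta}$, hence $II\leq C\frac{1+\delta}{\delta}\,2^{-N\delta}\|f\|_{B^{s+\delta}_{p,\infty}}$.

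Next I optimize in $N$. Choose the smallest non-negative integer $N$ with
\begin{equation*}
2^{N\delta}\geq \frac{\|f\|_{B^{s+\delta}_{p,\infty}}}{\|f\|_{B^s_{p,\infty}}},
\quad\text{i.e.}\quad N=\left\lceil \frac{1}{\delta}\log_2 \frac{\|f\|_{B^{s+\delta}_{p,\infty}}}{\|f\|_{B^s_{p,\infty}}}\right\rceil_{+}.
\end{equation*}
With this choice $II\leq C\frac{1+\delta}{\delta}\|f\|_{B^s_{p,\infty}}$, while
\begin{equation*}
N+2\leq \frac{1}{\delta\ln 2}\log\!\left(\frac{\|f\|_{B^{s+\delta}_{p,\infty}}}{\|f\|_{B^s_{p,\infty}}}\right)+3\leq C\,\frac{1+\delta}{\delta}\!\left(1+\log\frac{\|f\|_{B^{s+\delta}_{p,\infty}}}{\|f\|_{B^s_{p,\infty}}}\right),
\end{equation*}
and combining with the bound on $I$ produces the announced inequality.

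The only genuinely delicate point is the treatment of the constant $\frac{1+\delta}{\delta}$: one must verify that a single multiplicative factor of this size controls both $\frac{1}{1-2^{-\delta}}$ (the geometric-series blowup as $\delta\downarrow 0$) and $\frac{1}{\delta\ln 2}$ (the large-frequency cutoff arising from solving $2^{N\delta}\sim \|f\|_{B^{s+\delta}_{p,\infty}}/\|f\|_{B^s_{p,\infty}}$), uniformly in $\delta>0$; this is why the factor appears in the statement rather than merely $\frac{1}{\delta}$. The case $\|f\|_{B^s_{p,\infty}}=0$ forces $f=0$ so the inequality is trivial, and if the ratio inside the logarithm is less than $1$ one simply chooses $N=0$, which is absorbed into the $1$ in $1+\log(\cdot)$. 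No structural obstacle arises beyond these bookkeeping checks; the proof is a direct Littlewood--Paley argument and this is precisely why Danchin's paper states it as a lemma without detail.
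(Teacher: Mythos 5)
The paper does not actually prove this lemma---it is quoted directly from Danchin \cite{danchin2005fourier}---and your dyadic-splitting argument is precisely the standard proof of it: the cutoff at level $N$, the bound $I\leq (N+2)\|f\|_{B^s_{p,\infty}}$, the geometric-series estimate producing the $\frac{1+\delta}{\delta}$ factor via $1-2^{-\delta}\geq \frac{\delta\ln 2}{1+\delta\ln 2}$, and the optimization of $N$ are all correct. The one caveat is the degenerate case $\|f\|_{B^{s+\delta}_{p,\infty}}<\|f\|_{B^{s}_{p,\infty}}$: there your argument gives $\|f\|_{B^s_{p,1}}\leq C\frac{1+\delta}{\delta}\|f\|_{B^s_{p,\infty}}$, but the stated right-hand side carries the factor $1+\log(\cdot)$, which can then be arbitrarily small or even negative (for $\delta>1/\ln 2$), so it is not simply ``absorbed into the $1$''; this is a defect of the lemma as stated (many authors write $\log(e+\cdot)$ to avoid it) rather than of your proof, and it is harmless here since the paper only invokes the lemma with $\delta\in(0,1)$ and with the ratio effectively bounded below by the uniform bound on the approximate solutions.
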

Applying Lemma \ref{logestimate} with $p=2$, $s=-1/2$ to the right hand side of \eqref{3.31}, and using the following uniform bound (via \eqref{3.14})
$$
\|m_{k+j}-m_{k}\|_{L^\infty_{T^*}(B^{-1/2+\delta}_{2,\infty})}+\|n_{k+j}-n_{k}\|_{L^\infty_{T^*}(B^{-1/2+\delta}_{2,\infty})}\leq 4C\hbar(F_0),\quad \forall k,j\geq 1,
$$
for any $\delta\in (0,1)$, we obtain
\begin{equation}\label{3.32}
\begin{split}
&\|(m_{k+j+1}-m_{k+1})(t)\|_{B^{-1/2}_{2,\infty}}\\
 &\quad\leq C F_02^{-k}+C\hbar^2(F_0)\int_0^t(|\alpha(t')|+|\gamma(t')|)\|(m_{k+j+1}-m_{k+1})(t')\|_{B^{-1/2}_{2,\infty}}dt'\\
 &\qquad +C\hbar^2(F_0)\int_0^t(|\alpha(t')|+|\gamma(t')|) \|(m_{k+j}-m_{k})(t')\|_{B^{-1/2}_{2,\infty}}\\
 &\quad\quad\quad\quad\qquad\times \left(1
 +\log\frac{\|(m_{k+j}-m_{k})(t')\|_{B^{-1/2+\delta}_{2,\infty}}}{\|(m_{k+j}-m_{k})(t')\|_{B^{-1/2}_{2,\infty}}}\right) dt'\\
 &\qquad +C\hbar^2(F_0)\int_0^t(|\alpha(t')|+|\gamma(t')|) \|(n_{k+j}-n_{k})(t')\|_{B^{-1/2}_{2,\infty}}\\
 &\quad\quad\quad\quad\qquad\times \left(1
 +\log\frac{\|(n_{k+j}-n_{k})(t')\|_{B^{-1/2+\delta}_{2,\infty}}}{\|(n_{k+j}-n_{k})(t')\|_{B^{-1/2}_{2,\infty}}}\right) dt'\\
 &\quad\leq C F_02^{-k}+C\hbar^2(F_0)\int_0^t(|\alpha(t')|+|\gamma(t')|)\|(m_{k+j+1}-m_{k+1})(t')\|_{B^{-1/2}_{2,\infty}}dt'\\
 &\qquad+C\hbar^2(F_0)\int_0^t(|\alpha(t')|+|\gamma(t')|) \mathscr{D}_{k,j}(t') \left(1
 +\log\frac{4C\hbar(F_0)}{\mathscr{D}_{k,j}(t')}\right) dt'.
\end{split}
\end{equation}
In the last inequality, we have used the fact that the function $\nu(x)=x(1+\log(4C\hbar(F_0)/x))$ is continuous and increasing for all $x\in (0,4C\hbar(F_0)]$. Similarly, by applying Lemma \ref{lem:priorieti} and the techniques as we used above, one can also derive the estimate for the equation with respect to $n_{k+j+1}-n_{k+1}$ as follows:
\begin{equation}\label{3.33}
\begin{split}
&\|(n_{k+j+1}-n_{k+1})(t)\|_{B^{-1/2}_{2,\infty}}\\
   &\quad\leq C F_02^{-k}+C\hbar^2(F_0)\int_0^t(|\alpha(t')|+|\gamma(t')|)\|(n_{k+j+1}-n_{k+1})(t')\|_{B^{-1/2}_{2,\infty}}dt'\\
 &\qquad+C\hbar^2(F_0)\int_0^t(|\alpha(t')|+|\gamma(t')|) \mathscr{D}_{k,j}(t') \left(1
 +\log\frac{4C\hbar(F_0)}{\mathscr{D}_{k,j}(t')}\right) dt'.
\end{split}
\end{equation}
Adding \eqref{3.32} and \eqref{3.33} leads to
\begin{equation}\label{3.34}
\begin{split}
\mathscr{D}_{k+1,j}(t)&\leq C F_02^{-k}+C\hbar^2(F_0)\int_0^t(|\alpha(t')|+|\gamma(t')|)\mathscr{D}_{k+1,j}(t')dt'\\
 &\quad +C\hbar^2(F_0)\int_0^t(|\alpha(t')|+|\gamma(t')|) \mathscr{D}_{k,j}(t') \left(1
 +\log\frac{4C\hbar(F_0)}{\mathscr{D}_{k,j}(t')}\right) dt'.
\end{split}
\end{equation}
Thanks to the fact of $\alpha,\gamma \in L^1_{loc}([0,\infty);\mathbb{R})$, we have
\begin{equation*}
\begin{split}
\int_{t'}^t(|\alpha(\tau)|+|\gamma(\tau)|)d\tau\leq \int_{0}^{T^*}(|\alpha(\tau)|+|\gamma(\tau)|)d\tau \leq C, \quad  0\leq t'\leq t \leq T^*.
\end{split}
\end{equation*}
An application of Gronwall's lemma to \eqref{3.34} leads to
\begin{equation*}
\begin{split}
\mathscr{D}_{k+1,j}(t)\leq& C F_02^{-k} \exp{\left\{C F_0^2\int_0^t(|\alpha(t')|+|\gamma(t')|)dt'\right\}}\\
&+C\hbar^2(F_0)\int_0^t \exp{\left\{C\hbar^2(F_0)\int_{t'}^t(|\alpha(\tau)|+|\gamma(\tau)|)d\tau\right\}}  (|\alpha(t')|+|\gamma(t')|)\\
&\quad \quad\quad\times \mathscr{D}_{k,j}(t') \left(1
 +\log \frac{4C\hbar(F_0)}{\mathscr{D}_{k,j}(t')} \right) dt'\\
 \leq& e^{C\hbar^2(F_0)}\left(2^{-k} +\int_0^t    ( |\alpha(t')|+|\gamma(t')|)\mathscr{D}_{k,j}(t') \left(1
 +\log\frac{4C\hbar(F_0)}{\mathscr{D}_{k,j}(t')}\right) dt'\right).
\end{split}
\end{equation*}
This proves the inequality \eqref{3.15}.

Now we show the strong convergence result of the approximate solutions in $C_{T^*}(X_{-1/2,\infty})$. Since $\nu(x)$ is an increasing continuous function,  by taking the supremum with respect to $j\geq 1$ in \eqref{3.15}, we obtain
\begin{equation}\label{3.35}
\begin{split}
\sup_{j\geq 1}\mathscr{D}_{k+1,j}(t) \leq  e^{C\hbar^2(F_0)}\left(2^{-k} +\int_0^t     (|\alpha(t')|+|\gamma(t')|)~\sup_{j\geq 1}\mathscr{D}_{k,j}(t') \left(1
 +\log\frac{4C\hbar(F_0)}{\sup\limits_{j\geq 1}\mathscr{D}_{k,j}(t')}\right) dt'\right).
\end{split}
\end{equation}
Define
\begin{equation*}
\begin{split}
\mathscr{E}(t)=\limsup_{k\rightarrow \infty}\sup_{j\geq 1}\mathscr{D}_{k,j}(t).
\end{split}
\end{equation*}
Then for any $\epsilon>0$, there is an integer $n_\epsilon>0$ such that
\begin{equation*}
\begin{split}
\sup_{j\geq 1}\mathscr{D}_{k,j}(t)\leq \mathscr{E}(t)+\epsilon,\quad \forall k\geq n_\epsilon.
\end{split}
\end{equation*}
It then follows from \eqref{3.35} that
\begin{equation}\label{3.36}
\begin{split}
\sup_{j\geq 1}\mathscr{D}_{k+1,j}(t) \leq  e^{C\hbar^2(F_0)}\left(2^{-k} +\int_0^t    ( |\alpha(t')|+|\gamma(t')|)(\mathscr{E}(t')+\epsilon) \left(1
 +\log\frac{4C\hbar(F_0)}{\mathscr{E}(t')+\epsilon}\right) dt'\right).
\end{split}
\end{equation}
Using the fact of
\begin{equation*}
\begin{split}
1+\log (x) =\log(ex)\leq (e+1)\log(e+x),\quad \forall x\geq1,
\end{split}
\end{equation*}
and taking the limit as $k\rightarrow+ \infty$ in \eqref{3.36}, one can derive that
\begin{equation*}
\begin{split}
\mathscr{E}(t)&\leq e^{C\hbar^2(F_0)}\int_0^t     (|\alpha(t')|+|\gamma(t')|)(\mathscr{E}(t')+\epsilon) \left(1
 +\log\frac{4C\hbar(F_0)}{\mathscr{E}(t')+\epsilon}\right) dt'\\
&\leq(e+1)e^{C\hbar^2(F_0)}\int_0^t (|\alpha(t')|+|\gamma(t')|)(\mathscr{E}(t')+\epsilon) \log\left(e
 +\frac{4C\hbar(F_0)}{\mathscr{E}(t')+\epsilon}\right)dt'.
\end{split}
\end{equation*}
Let $\epsilon\rightarrow 0$, we get for any $t\in [0,T^*]$ that
\begin{equation}\label{3.37}
\begin{split}
\mathscr{E}(t)\leq  e^{C\hbar^2(F_0)}\int_0^t   (|\alpha(t')|+|\gamma(t')|)\mu(\mathscr{E}(t')) dt' ,
\end{split}
\end{equation}
where
\begin{equation*}
\begin{split}
\mu(x)=
\begin{cases}
x\log\left(e+\frac{4C\hbar(F_0)}{x}\right),& x\in (0,4C\hbar(F_0)],\\
0,& x=0.
\end{cases}
\end{split}
\end{equation*}
Notice that $\mu(x)$ is an increasing continuous function for $x\in [0,4C\hbar(F_0)]$, and hence a modulus of continuity. Moreover, by the substitution of variable $y=4C\hbar(F_0)/x$, we have
\begin{equation*}
\begin{split}
\int_0^{4C\hbar(F_0)}\frac{dx}{\mu(x)}&=\int_1^{+\infty}\frac{dx}{y\log(e+y)}\\
&\geq \int_1^{+\infty}\frac{dx}{(e+y)\log(e+y)}=\log\log(e+y)|_{1}^{+\infty}=+\infty,
\end{split}
\end{equation*}
which implies that the function $\mu(x)$ is actually an Osgood modulus of continuity on $[0,4C\hbar(F_0)]$.

As both $\alpha$ and $\gamma$ are locally integrable functions from $[0,T^*]$ into $\mathbb{R}^+$, one can now apply the well-known Osgood lemma (cf. Lemma 3.4 in \cite{64}) to \eqref{3.37} to obtain
\begin{equation}\label{3.38}
\begin{split}
\mathscr{E}(t)\equiv 0, \quad \mbox{for all}~ t\in [0,T^*].
\end{split}
\end{equation}
Recalling the definition of $\mathscr{E}(t)$, we obtain from \eqref{3.38} that $\lim_{k\rightarrow \infty}\sup_{j\geq 1}\mathscr{D}_{k+1,j}(t)=0$, which implies that $(m_k,n_k)_{k\geq1}$ is a Cauchy sequence in $C_{T^*}(X_{-1/2,\infty})$.

To improve the convergence to a more regular space $C_{T^*}(X_{-1/2,1})$, we shall apply an interpolation argument, which is based on the following lemma.
\begin{lemma}[Real interpolation inequality \cite{chemin2004localization}] \label{realintepolation}
If $s_1$ and $s_2$ are real numbers such that $s_1<s_2$, $\theta \in (0,1)$, and $(p,r)$  is in $[1,\infty]$, then we have
\begin{equation*}
\begin{split}
 \|f\|_{B^{\theta s_1+ (1-\theta)s_2}_{p,1}}\leq  \frac{C}{s_1-s_2}\left(\frac{1}{\theta}+\frac{1}{1-\theta}\right)
 \|f\|_{B^{s_1}_{p,\infty}}^\theta \|f\|_{B^{s_2}_{p,\infty}}^{1-\theta},
\end{split}
\end{equation*}
for some positive constant $C$.
\end{lemma}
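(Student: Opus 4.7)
\smallskip
\noindent\textbf{Proof proposal.} The plan is to reduce the inequality to an explicit splitting of the dyadic series defining the $B^{s}_{p,1}$ norm and then optimize the splitting index; this is a quantitative version of the usual embedding $B^{s_1}_{p,\infty}\cap B^{s_2}_{p,\infty}\hookrightarrow B^s_{p,1}$ for $s_1<s<s_2$. Write $s=\theta s_1+(1-\theta)s_2$, so that $s-s_1=(1-\theta)(s_2-s_1)>0$ and $s_2-s=\theta(s_2-s_1)>0$. For any integer $N\geq-1$ I would split
\begin{equation*}
\|f\|_{B^s_{p,1}}=\sum_{-1\leq q\leq N}2^{qs}\|\Delta_q f\|_{L^p}+\sum_{q>N}2^{qs}\|\Delta_q f\|_{L^p}\doteq\mathrm{I}_N+\mathrm{II}_N.
\end{equation*}

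For the low-frequency piece $\mathrm{I}_N$ I would factor $2^{qs}=2^{q(s-s_1)}\cdot 2^{qs_1}$ and use the $B^{s_1}_{p,\infty}$ norm to bound $2^{qs_1}\|\Delta_q f\|_{L^p}$ uniformly in $q$; since $s-s_1>0$ the remaining geometric sum is controlled by $\frac{2^{(N+1)(s-s_1)}}{2^{s-s_1}-1}$. Symmetrically, for $\mathrm{II}_N$ I would factor $2^{qs}=2^{q(s-s_2)}\cdot 2^{qs_2}$ and use $\|f\|_{B^{s_2}_{p,\infty}}$; since $s-s_2<0$, the tail sums to $\frac{2^{(N+1)(s-s_2)}}{1-2^{s-s_2}}$. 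Putting these together yields
\begin{equation*}
\|f\|_{B^s_{p,1}}\leq \frac{2^{(N+1)(s-s_1)}}{2^{s-s_1}-1}\|f\|_{B^{s_1}_{p,\infty}}+\frac{2^{(N+1)(s-s_2)}}{1-2^{s-s_2}}\|f\|_{B^{s_2}_{p,\infty}}.
\end{equation*}

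The next step is to optimize in $N\in\mathbb{R}$ (and then round to an integer, losing only a harmless multiplicative constant). Setting the derivatives of the two terms equal gives $2^{N(s_2-s_1)}\sim \|f\|_{B^{s_2}_{p,\infty}}/\|f\|_{B^{s_1}_{p,\infty}}$, which after substitution produces the product $\|f\|_{B^{s_1}_{p,\infty}}^{\theta}\|f\|_{B^{s_2}_{p,\infty}}^{1-\theta}$ since $\frac{s_2-s}{s_2-s_1}=\theta$ and $\frac{s-s_1}{s_2-s_1}=1-\theta$. The main technical nuisance, and the step that requires the most care, is tracking the prefactors: the two denominators $2^{s-s_1}-1$ and $1-2^{s-s_2}$ satisfy, by the elementary bound $2^x-1\geq x\log 2$ for $x>0$, the estimates
\begin{equation*}
\frac{1}{2^{s-s_1}-1}\leq \frac{C}{(1-\theta)(s_2-s_1)},\qquad \frac{1}{1-2^{s-s_2}}\leq \frac{C}{\theta(s_2-s_1)},
\end{equation*}
which together give exactly the factor $\frac{1}{s_2-s_1}\bigl(\frac{1}{\theta}+\frac{1}{1-\theta}\bigr)$ appearing in the statement (absorbing the sign discrepancy relative to the typo $s_1-s_2$ in the excerpt). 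Assembling the optimized splitting with these quantitative prefactors delivers the claimed inequality.
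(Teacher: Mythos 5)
The paper does not prove this lemma at all --- it is quoted verbatim from the cited reference --- so there is no in-paper argument to compare against. Your proof is the standard one (and, as far as I can tell, the one in the cited source): split the $\ell^1$ dyadic sum at an index $N$, control the low frequencies by $\|f\|_{B^{s_1}_{p,\infty}}$ and the high frequencies by $\|f\|_{B^{s_2}_{p,\infty}}$ via geometric series, and balance the two contributions by choosing $2^{(N+1)(s_2-s_1)}\sim \|f\|_{B^{s_2}_{p,\infty}}/\|f\|_{B^{s_1}_{p,\infty}}$. The splitting, the exponents $\theta=\frac{s_2-s}{s_2-s_1}$ and $1-\theta=\frac{s-s_1}{s_2-s_1}$, and the bound $\frac{1}{2^{s-s_1}-1}\leq \frac{C}{(1-\theta)(s_2-s_1)}$ are all correct, as is your observation that $s_1-s_2$ in the displayed constant is a sign typo.

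One step is not literally valid and is worth flagging: the claimed bound $\frac{1}{1-2^{s-s_2}}\leq \frac{C}{\theta(s_2-s_1)}$ fails when $y:=\theta(s_2-s_1)=s_2-s$ is large, since $1-2^{-y}\to 1$ while $C/y\to 0$; the elementary inequality $2^x-1\geq x\log 2$ only rescues the \emph{other} denominator. This is not a defect of your strategy but of the statement itself: for $f$ a single dyadic block one has $\|f\|_{B^{s}_{p,1}}=\|f\|_{B^{s_1}_{p,\infty}}^{\theta}\|f\|_{B^{s_2}_{p,\infty}}^{1-\theta}$ exactly, so no inequality with a prefactor tending to $0$ as $s_2-s_1\to\infty$ can hold. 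What your splitting actually proves is the inequality with prefactor $C\left(1+\frac{1}{s-s_1}+\frac{1}{s_2-s}\right)$, which coincides with the stated constant only when $s_2-s_1$ is bounded (the same caveat applies to your ``harmless'' rounding of $N$ to an integer, which costs a factor $2^{|s-s_1|}$). In this paper the lemma is only ever invoked with $s_1=-1/2$, $s_2=1/2$, so $s_2-s_1=1$ and the discrepancy is absorbed into $C$; but if you want a clean self-contained proof you should either state the prefactor in the additive form above or add the hypothesis $s_2-s_1\leq 1$.
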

For any $\theta\in (0,1)$, since $1/2-\theta> -1/2$, the Sobolev embedding from $B_{2,1}^{1/2-\theta}$ into  $B_{2,1}^{-1/2}$ is continuous. By choosing $s_1=-1/2$, $s_2=1/2$ and $p=2$ in Lemma \ref{realintepolation}, we have
\begin{eqnarray}\label{3.39}
\begin{split}
\|(m_{k+j+1}-m_{k+1})(t)\|_{B_{2,1}^{-1/2} } &\leq C\|(m_{k+j+1}-m_{k+1})(t)\|_{B_{2,1}^{1/2-\theta} } \\ &\leq C(\theta)\|(m_{k+j+1}-m_{k+1})(t)\|_{B_{2,\infty}^{-1/2} } ^\theta \|(m_{k+j+1}-m_{k+1})(t)\|_{B_{2,\infty}^{1/2} }^{1-\theta}\\
&\leq C(\theta)\|(m_{k+j+1}-m_{k+1})(t)\|_{B_{2,\infty}^{-1/2} } ^\theta(\|m_{k+j+1}\|_{B_{2,1}^{1/2} }+\|m_{k+1}\|_{B_{2,1}^{1/2} })^{1-\theta}\\
&\leq C(\theta)(4C\hbar(F_0))^{1-\theta}\|(m_{k+j+1}-m_{k+1})(t)\|_{B_{2,\infty}^{-1/2} } ^\theta,
\end{split}
\end{eqnarray}
where $2C\hbar(F_0)$ is the uniform bound for the approximate solutions $(m_k,n_k)$ for all $t\in [0,T^*]$. It then  from \eqref{3.39} and \eqref{3.38} that the sequence $(m_k,n_k)_{k\geq1}$ is a Cauchy sequence in $C_{T^*}(X_{1/2,1})$. As a result, there must be a pair of function $(m,n)$ such that
\begin{equation}\label{3.40}
\begin{split}
(m_k,n_k)\rightarrow (m,n)\quad \mbox{strongly in} \quad C_{T^*}(X_{-1/2,1}).
\end{split}
\end{equation}

\subsection{Proof of existence} We verify that the pair $(m,n)$ in \eqref{3.40} is actually the strong solution to the system \eqref{P2NSQQ}. To this end, let us recall the following crucial lemma.
\begin{lemma}[Fatou-type lemma \cite{danchin2005fourier}]
Let $s\in\mathbb{R}$, $(p,r)\in[1,\infty]^2$. If $(f_k)_{k\geq1}$ is a bounded sequence in $B_{p,r}^s$, and $f_k\stackrel{\mathcal {S}'}{\longrightarrow} f$ as $k\rightarrow\infty$, where $\mathcal {S}'$ is the tempered distribution space. Then $u\in B_{p,r}^s$ and
\begin{equation*}
\begin{split}
 \|f\|_{B^{s}_{p,r}}\leq C\liminf_{k\rightarrow\infty}\|f_k\|_{B^{s}_{p,r}}.
\end{split}
\end{equation*}
\end{lemma}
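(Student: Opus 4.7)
The plan is to reduce the claim to a dyadic-block version of the usual Fatou lemma. The key observation is that each dyadic block $\Delta_q$ is a convolution with a Schwartz function (a trigonometric polynomial in the periodic setting), so $u\mapsto \Delta_q u$ is continuous from $\mathcal{S}'$ into itself. Hence the hypothesis $f_k\stackrel{\mathcal{S}'}{\longrightarrow} f$ forces $\Delta_q f_k\stackrel{\mathcal{S}'}{\longrightarrow}\Delta_q f$ for every $q\geq -1$. This bridges the weak distributional convergence of the sequence to a blockwise convergence of smooth functions.

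Next I would upgrade this to weak/weak-$*$ convergence in $L^p$. The definition of the Besov norm gives $\|\Delta_q f_k\|_{L^p}\leq 2^{-qs}\|f_k\|_{B_{p,r}^s}$, so the sequence $(\Delta_q f_k)_k$ is uniformly bounded in $L^p$ for each fixed $q$. For $1<p\leq \infty$, Banach–Alaoglu extracts a subsequence converging weakly (weak-$*$ when $p=\infty$) in $L^p$; the limit must coincide with $\Delta_q f$ since both agree as tempered distributions and $\mathcal{S}$ is dense in the relevant predual. For $p=1$ one tests against smooth cutoffs and uses classical Fatou on $|\Delta_q f_k|$. In either case, lower semicontinuity of the $L^p$ norm under weak convergence yields
\begin{equation*}
\|\Delta_q f\|_{L^p}\leq \liminf_{k\to\infty}\|\Delta_q f_k\|_{L^p}\qquad \text{for every }q\geq -1.
\end{equation*}

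Finally I would insert this blockwise bound into the $\ell^r$ summation over $q$. For $r<\infty$ this is precisely the scalar Fatou lemma applied to counting measure on $\{-1,0,1,\dots\}$ with weights $2^{qs}$:
\begin{equation*}
\Bigl\|2^{qs}\|\Delta_q f\|_{L^p}\Bigr\|_{\ell^r}
\leq \Bigl\|2^{qs}\liminf_{k}\|\Delta_q f_k\|_{L^p}\Bigr\|_{\ell^r}
\leq \liminf_{k\to\infty}\|f_k\|_{B_{p,r}^s}.
\end{equation*}
For $r=\infty$ one simply takes the supremum in $q$ of the blockwise inequality and uses $\sup_q\liminf_k a_{k,q}\leq \liminf_k\sup_q a_{k,q}$, which gives the same conclusion.

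The main obstacle I anticipate is the endpoint case $p=\infty$: one must make sure that the weak-$*$ limit extracted from Banach–Alaoglu genuinely equals $\Delta_q f$ in the distributional sense, which requires pairing with $\mathcal{S}(\mathbb{T})$ rather than with the full $L^1$ predual and invoking a density argument. Every other step is a routine application of lower semicontinuity together with the definition of the Besov norm, and the constant $C$ in the final inequality is absorbed from the normalization of the Littlewood–Paley partition of unity in Lemma~\ref{lem:Dyaunit}.
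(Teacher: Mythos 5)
Your argument is correct and is exactly the standard proof of this lemma; the paper itself offers no proof (it simply cites Danchin), and the cited proof proceeds the same way: blockwise lower semicontinuity of $\|\Delta_q\cdot\|_{L^p}$ under convergence in $\mathcal{S}'$, followed by Fatou for the weighted $\ell^r$ sum. One small simplification worth noting in the periodic setting: each $\Delta_q f_k$ is a trigonometric polynomial supported on finitely many frequencies, and $f_k\stackrel{\mathcal{S}'}{\longrightarrow}f$ forces $\widehat{f_k}(\alpha)\to\widehat{f}(\alpha)$ for every mode, hence $\Delta_q f_k\to\Delta_q f$ pointwise; the scalar Fatou lemma then gives $\|\Delta_q f\|_{L^p}\leq\liminf_k\|\Delta_q f_k\|_{L^p}$ for every $p\in[1,\infty]$ at once, so the Banach--Alaoglu case split (and your worry about the $p=\infty$ and $p=1$ endpoints) can be dispensed with entirely.
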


\begin{proof}[Proof of Theorem \ref{theorem1}: Existence] We finish the proof of the existence of local and global solutions to the 2NSQQ system respectively.

(1) \textbf{Local-in-time existence.}
As $B_{p,r}^s\hookrightarrow \mathcal {S}'$ for any $s\in \mathbb{R}$, we  conclude from \eqref{3.40} that $(m_k,n_k)\stackrel{\mathcal {S}'}{\rightarrow} (m,n)$ as $k\rightarrow\infty$. Moreover, by using the fact that $(m_k,n_k)_{k\geq1}$ is uniformly bounded in $C_{T^*}(X_{1/2,1})$, it follows from Lemma 3.5 that $(m,n)\in C_{T^*}(X_{1/2,1})$. In terms of the system \eqref{P2NSQQ} itself, it is easy to verify that $(\partial_tm,\partial_tn)\in C_{T^*}(X_{-1/2,1})$.  Thanks to the strong convergence result \eqref{3.40}, it is then easy to pass to the limit in \eqref{3.1} and to demonstrate that $(m,n)\in E_{2,1}^{1/2}(T^*)$ is indeed a strong solution of the 2NSQQ system.

(2) \textbf{Global-in-time existence.} Based on the local existence theory in (1), it suffices to prove the uniform boundness of the approximate solutions $(m_k,n_k)$ for all  $t\in [0,\infty)$. Indeed, since the time-dependent parameters $\alpha,\gamma \in L^1(0,\infty;\mathbb{R})$, we obtain by \eqref{3.9} that
\begin{equation}\label{3.41}
\begin{split}
\sup_{t\in [0,\infty)}F_{1}(t)
\leq&C\exp{\left\{C\int_0^\infty (|\alpha(t')|+|\gamma(t')|)\left(2C F_0\right)^2 dt'\right\}}\\
&\quad\times\left(F_0+\int_0^\infty (|\alpha(t')|+|\gamma(t')|)\left(2C F_0\right)^3 dt'\right)\\
\leq&2C\left(F_0+8 C^3 F_0^3 A(0,\infty)\right)\exp{\left\{4 C^3 F_0^2A(0,\infty)\right\}}\\
=&  2C\widetilde{\hbar}(F_0),
\end{split}
\end{equation}
where the function $\widetilde{\hbar}(x)$ is defined by
\begin{equation*}
\begin{split}
\widetilde{\hbar}(x)\doteq\left(x+8 C^3 A(0,\infty)x^3\right)\exp{\left\{4 C^3A(0,\infty)x^2\right\}}.
\end{split}
\end{equation*}
Clearly, $\widetilde{\hbar}(x)\geq x$  and the function $\widetilde{\hbar}(x)$ is also a modulus of continuity defined on $[0,\infty)$.

For $F_2(t)$, we deduce from \eqref{3.9} that
\begin{equation}\label{3.42}
\begin{split}
\sup_{t\in [0,\infty)}F_{2}(t)\leq& C\left(F_0+ 8C^3\widetilde{\hbar}^3(F_0)A(0,\infty)\right)\exp{\left\{4C^3\widetilde{\hbar}^2(F_0)A(0,\infty)\right\}}\\
\leq& C \widetilde{\hbar}(F_0)\left(1+ 8C^3\widetilde{\hbar}^2(F_0)A(0,\infty)\right)\exp{\left\{4C^3\widetilde{\hbar}^2(F_0)A(0,\infty)\right\}}\\
\leq&C\widetilde{\hbar}(F_0) \exp{\left\{12C^3\widetilde{\hbar}^2(F_0)A(0,\infty)\right\}}.
\end{split}
\end{equation}
To estimate the right hand side  of \eqref{3.42}, one can assume that the infinite integral $A(0,\infty)$ is sufficiently small such that
\begin{equation}\label{upperbound}
\begin{split}
 A(0,\infty)\leq& \frac{\ln2}{12C^3F_0^2\left(1+8C^3A(0,\infty)F_0^2\right)^2 \exp\{8C^3A(0,\infty)F_0^2\}}\\
 =&\frac{\ln2}{12C^3 \widetilde{\hbar}^2(F_0)}.
\end{split}
\end{equation}
Indeed, one of the sufficient assumptions is given by
$$
 A(0,\infty)\leq \frac{\ln 2 }{24C^3}\Big(\|m_0\|_{B^{1/2}_{2,1}}+\|n_0\|_{B^{1/2}_{2,1}}\Big)^{-2}  .
$$
Then we obtain from \eqref{3.42} and \eqref{upperbound} that
\begin{equation*}
\begin{split}
\sup_{t\in [0,\infty)}F_{2}(t)\leq C\widetilde{\hbar}(F_0) \exp{\left\{12C^3\widetilde{\hbar}^2(F_0)A(0,\infty)\right\}}\leq 2C\widetilde{\hbar}(F_0).
\end{split}
\end{equation*}
Inductively, for any given $n\geq 3$, we assume that
\begin{equation*}
\begin{split}
\sup_{t\in [0,\infty)}F_{n}(t)\leq 2C\widetilde{\hbar}(F_0).
\end{split}
\end{equation*}
Using \eqref{3.9} and the upper bound \eqref{upperbound} again, we have
\begin{equation*}
\begin{split}
\sup_{t\in [0,\infty)}F_{n+1}(t)
\leq &C\exp{\left\{4C^3\widetilde{\hbar}^2(F_0)A(0,\infty)\right\}}
\left(F_0+8C^3\widetilde{\hbar}^3(F_0)A(0,\infty)\right)\\
\leq &C\widetilde{\hbar}(F_0)\exp{\left\{4C^3\widetilde{\hbar}^2(F_0)A(0,\infty)\right\}}
\left(1+8C^3\widetilde{\hbar}^2(F_0)A(0,\infty)\right)\\
\leq &C\widetilde{\hbar}(F_0)\exp{\left\{12C^3\widetilde{\hbar}^2(F_0)A(0,\infty)\right\}} \\
\leq &2C\widetilde{\hbar}(F_0).
\end{split}
\end{equation*}
Therefore, we have proved the following uniform bound for approximate solutions
\begin{equation}\label{3.43}
\begin{split}
 \sup_{t\in [0,\infty)}\Big(\|m_k(t)\|_{B^{1/2}_{2,1}}+\|n_k(t)\|_{B^{1/2}_{2,1}}\Big)\leq 2C\widetilde{\hbar}(F_0),\quad \mbox{for any}~ k \geq 0,
\end{split}
\end{equation}
which implies that the sequence $(m_k,n_k)_{k\geq 1}$ is uniformly bounded with respect to $t$-variable on $[0,\infty)$, and hence ensures the existence of global-in-time solution to the 2NSQQ system \eqref{P2NSQQ} along with the conclusion in (1).

The proof of the existence part of Theorem \ref{theorem1} is now completed.
\end{proof}

%%%%%%%%%%%%%%%%%%%%%%%%%%%%%%%%%%%%%%%%%%%%%%%%%%
\section{Proof of Theorem \ref{theorem1}: Uniqueness}
%%%%%%%%%%%%%%%%%%%%%%%%%%%%%%%%%%%%%%%%%%%%%%%%%%
In this section, we aim at establishing the uniqueness of the solution $(m,n)$ to the periodic Cauchy problem \eqref{P2NSQQ}, which is a direct consequence of the following lemma.
\begin{lemma} \label{uniqueness}
Let $(m_j,n_j)\in X_{T^*}$ be a solution to the periodic Cauchy problem \eqref{P2NSQQ} with  initial data $(m_j(x,0),n_j(x,0))\in X_{1/2,1}$ ($j=1,2$). Setting $\varpi(x,t)=m_1(x,t)-m_2(x,t)$, $\omega(x,t)=n_1(x,t)-n_2(x,t)$, $\varpi_0(x)=m_1(x,0)-m_2(x,0)$, and $\omega_0(x)=n_1(x,0)-n_2(x,0)$. If we denote
$$
\mathscr{D} (t)= \sum_{j=1,2}\Big(\|m_j(t)\|_{B^{1/2}_{2,1}}^2+\|n_j(t)\|_{B^{1/2}_{2,1}}^2\Big),\quad \mathscr{D}_0= \sum_{j=1,2}\Big(\|m_j(0)\|_{B^{1/2}_{2,1}}^2+\|n_j(0)\|_{B^{1/2}_{2,1}}^2\Big),
$$
then we have
\begin{equation*}
\begin{split}
 &\frac{\|\varpi(t)\|_{B^{-1/2}_{2,\infty}}+\|\omega(t)\|_{B^{-1/2}_{2,\infty}}}{4 eC \hbar\left(4+\mathscr{D} _0\right)}\\
 &\quad\leq  \exp{\left\{16 C^2 \hbar^2\left(4+\mathscr{D} _0\right)\int_0^t(|\alpha(t')|+|\gamma(t')| ) dt' \right\}} \left(\frac{\|\varpi_0\|_{B^{-1/2}_{2,\infty}}+\|\omega_0\|_{B^{-1/2}_{2,\infty}}}{4 eC \hbar\left(4+\mathscr{D} _0\right)}\right)^{\sigma(t)},
\end{split}
\end{equation*}
for all $t\in[0,T^*]$, where
\begin{equation*}
\begin{split}
\sigma(t)=\exp{\left\{-16 C^2\log(e+1) \hbar^2\left(4+\mathscr{D} _0\right)\int_{0}^t (|\alpha(t')|+|\gamma(t')| )dt'\right\}},
\end{split}
\end{equation*}
and the the function $\hbar(x)$ is a modulus of continuity defined in \eqref{3.10}.
\end{lemma}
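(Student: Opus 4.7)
The argument mirrors the convergence step of Section 3, applied now to two genuine solutions $(m_1,n_1)$ and $(m_2,n_2)$. Setting $\varpi = m_1-m_2$ and $\omega = n_1-n_2$, subtracting the two copies of \eqref{P2NSQQ} yields nonlocal transport equations
\begin{equation*}
\partial_t \varpi + \rho_1 \partial_x \varpi = \mathcal{G}_1, \qquad \partial_t \omega + \rho_1 \partial_x \omega = \mathcal{G}_2,
\end{equation*}
whose source terms are the exact analogues of $\mathcal{F}_1,\mathcal{F}_2$ in \eqref{3.17}--\eqref{3.18}: each occurrence of $m_{k+j}-m_k$ is replaced by $\varpi$, of $n_{k+j}-n_k$ by $\omega$, and the roles of $(m_k,n_k),(m_{k+j},n_{k+j})$ are played by $(m_1,n_1),(m_2,n_2)$. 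The a priori bound \eqref{3.14}, applied to each genuine solution, furnishes $\|m_i(t)\|_{B^{1/2}_{2,1}}+\|n_i(t)\|_{B^{1/2}_{2,1}} \leq C\hbar(4+\mathscr{D}_0)$ on $[0,T^*]$ for $i=1,2$.

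Set $H(t) := \|\varpi(t)\|_{B^{-1/2}_{2,\infty}} + \|\omega(t)\|_{B^{-1/2}_{2,\infty}}$ and $H_0 := H(0)$. Applying the transport estimate of Lemma \ref{lem:priorieti} at $(s,p,r)=(-1/2,2,\infty)$ yields
\begin{equation*}
H(t) \leq e^{V(t)}\left(H_0 + \int_0^t e^{-V(s)}\bigl(\|\mathcal{G}_1(s)\|_{B^{-1/2}_{2,\infty}} + \|\mathcal{G}_2(s)\|_{B^{-1/2}_{2,\infty}}\bigr)ds\right),
\end{equation*}
with $V(t) \leq C\hbar^2(4+\mathscr{D}_0)\int_0^t(|\alpha|+|\gamma|)d\tau$ by \eqref{3.21}. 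The bilinear estimates \eqref{3.22}--\eqref{3.30} then transfer verbatim to produce
\begin{equation*}
\|\mathcal{G}_j(s)\|_{B^{-1/2}_{2,\infty}} \leq C\hbar^2(4+\mathscr{D}_0)(|\alpha(s)|+|\gamma(s)|)\bigl(\|\varpi(s)\|_{B^{-1/2}_{2,1}} + \|\omega(s)\|_{B^{-1/2}_{2,1}}\bigr),
\end{equation*}
the most delicate ingredient being the mean-value contributions $\overline{\phi},\overline{\varphi}$, handled by the Littlewood--Paley duality pairing of \eqref{3.25}--\eqref{3.27}.

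To close the loop at the weaker index $B^{-1/2}_{2,\infty}$, I invoke the logarithmic interpolation of Lemma \ref{logestimate} together with the a priori bound $\|\varpi\|_{B^{1/2}_{2,\infty}}+\|\omega\|_{B^{1/2}_{2,\infty}} \leq 4C\hbar(4+\mathscr{D}_0)$, which yields
\begin{equation*}
\|\varpi\|_{B^{-1/2}_{2,1}} \leq C\|\varpi\|_{B^{-1/2}_{2,\infty}}\log\!\left(e + \frac{4eC\hbar(4+\mathscr{D}_0)}{\|\varpi\|_{B^{-1/2}_{2,\infty}}}\right),
\end{equation*}
and symmetrically for $\omega$. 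Writing $K = 16C^2\hbar^2(4+\mathscr{D}_0)$ and $M = 4eC\hbar(4+\mathscr{D}_0)$, the combined estimate collapses into the Osgood-type inequality
\begin{equation*}
H(t) \leq e^{K\int_0^t(|\alpha|+|\gamma|)d\tau}H_0 + K\int_0^t e^{K\int_s^t(|\alpha|+|\gamma|)d\tau}(|\alpha(s)|+|\gamma(s)|)H(s)\log\!\left(e+\frac{M}{H(s)}\right)ds.
\end{equation*}

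The final step solves this inequality explicitly. Setting $G(t) := e^{-K\int_0^t(|\alpha|+|\gamma|)d\tau}H(t)$ absorbs the exponential prefactor and reduces the problem to $G' \leq K(|\alpha|+|\gamma|)\mu(G)$ with the Osgood modulus $\mu(x) = x\log(e+M/x)$. The elementary comparison $\log(e+M/x) \leq \log(e+1)\log(eM/x)$, valid for $0 < x \leq M$, dominates $\mu$ by $\log(e+1)\cdot\tilde{\mu}_0$ where $\tilde{\mu}_0(x) = x\log(eM/x)$; the linearizing substitution $u = \log(eM/G)$ then converts the comparison ODE into $u' = -K\log(e+1)(|\alpha|+|\gamma|)u$, integrated to $u(t) = u(0)\exp\bigl(-K\log(e+1)\int_0^t(|\alpha|+|\gamma|)d\tau\bigr)$. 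Unravelling this substitution and reinserting the prefactor $e^{K\int(|\alpha|+|\gamma|)}$ produces precisely the power-type bound with exponent $\sigma(t)$ stated in the lemma. The main obstacle I expect is the mean-value handling: without the duality cancellation of \eqref{3.25}--\eqref{3.27} the right-hand side would violate the Moser threshold $s_1+s_2>0$ at the borderline regularity $-1/2$, and the whole scheme would collapse; the rest is constants bookkeeping and tracking the $\log(e+1)$ loss into the exponent $\sigma(t)$.
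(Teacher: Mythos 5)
Your proposal is correct and follows essentially the same route as the paper: transport estimate at $B^{-1/2}_{2,\infty}$ with velocity $\rho_1$, endpoint bilinear and Moser estimates for the source terms with the Littlewood--Paley duality pairing handling the mean-value contributions $\overline{\varphi},\overline{\phi}$, logarithmic interpolation to close at the weaker index, and the Osgood modulus $x\log(e+M/x)$ dominated via $\log(e+1)$ and linearized by the substitution $u=\log(eM/G)$ to yield the power-type bound with exponent $\sigma(t)$. The only deviations are cosmetic bookkeeping (whether the factor $e$ sits inside the modulus or only in the final denominator, and absorbing $\mathscr{D}(t)$ into the constant $K$ before rather than after Gronwall), neither of which affects the argument.
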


\begin{proof} [Proof of Lemma \ref{uniqueness}]
Apparently,  the pair of functions $(\varpi,\omega)$ solves the periodic Cauchy problem of the following nonlinear transport-type system:
\begin{equation}\label{4.1}
\begin{cases}
\varpi_t+\rho_1 \partial_x \varpi
=-(\rho_1 - \rho_2)\partial_xm_2-\varpi\left(\psi_2(t,x)-\overline{\psi_2}(t)\right)\\
\quad \quad\quad\quad\quad\quad+(\alpha+\gamma)m_1 \varphi(t,x)-\alpha m_1\phi(t,x)-(\alpha+\gamma)m_1\overline{\varphi}(t)+\alpha m_1\overline{\phi}(t),\\

\omega_t+\rho_1 \partial_x \omega
=-(\rho_1 - \rho_2)\partial_xn_2-\omega\left(\psi_2(t,x)-\overline{\psi_2}(t)\right)\\
\quad \quad\quad\quad\quad\quad+(\alpha+\gamma)n_1 \varphi(t,x)-\alpha n_1\phi(t,x)-(\alpha+\gamma)n_1\overline{\varphi}(t)+\alpha n_1\overline{\phi}(t),\\

\varpi(x,0)=\varpi_0(x),\quad \omega(x,0)=\omega_0(x),
\end{cases}
\end{equation}
where $m_j=u_j-\partial_x^2 u_j$, $n_j=v_j-\partial_x^2 v_j$, and
\begin{equation*}
\begin{split}
\varphi(t,x)&= [v_1-v_2+\partial_x(v_1-v_2)]m_1+(v_2+\partial_x v_2)\varpi,\\
\phi(t,x)&=[u_1-u_2-\partial_x(u_1-u_2)]n_1+(u_2-\partial_x u_2)\omega.
\end{split}
\end{equation*}
By applying Lemma   \ref{lem:priorieti} to the first equation with respect to $\varpi$ in \eqref{4.1}, we have
\begin{equation}\label{4.2}
\begin{split}
 \|\varpi(t)\|_{B^{-1/2}_{2,\infty}}\leq & \|\varpi_0\|_{B^{-1/2}_{2,\infty}}+C\int_0^t\|\partial_x\rho_1(t')\|_{B^{1/2}_{2,\infty}\cap L^\infty}\|\varpi(t')\|_{B^{-1/2}_{2,\infty}}dt' +\sum_{j=1}^4\int_0^tT_j(t') dt',
\end{split}
\end{equation}
where
\begin{equation*}
\begin{split}
&T_1(t) =  \|(\rho_1 - \rho_2)\partial_xm_2\|_{B^{-1/2}_{2,\infty}},\\ &T_2(t)=\|\varpi\left(\psi_2(t,\cdot)-\overline{\psi_2}(t)\right)\|_{B^{-1/2}_{2,\infty}},\\
&T_3(t) = \|(\alpha+\gamma)m_1 \varphi(t,\cdot)\|_{B^{-1/2}_{2,\infty}}+\|\alpha m_1\phi(t,\cdot)\|_{B^{-1/2}_{2,\infty}},\\
&T_4(t) =\|(\alpha+\gamma)m_1\overline{\varphi}(t)\|_{B^{-1/2}_{2,\infty}}+\|\alpha m_1\overline{\phi}(t)\|_{B^{-1/2}_{2,\infty}}.
\end{split}
\end{equation*}
Using the embedding $B^{ 1/2}_{2,1}\hookrightarrow B^{1/2}_{2,\infty}\cap L^\infty$ and the algebra property, we first have
\begin{equation}\label{4.3}
\begin{split}
\|\partial_x\rho_1\|_{B^{1/2}_{2,\infty}\cap L^\infty}=&\|\psi_1(t,\cdot)\|_{B^{1/2}_{2,\infty}\cap L^\infty}+\|\psi_1 (t,\cdot)\|_{L^\infty}\\
\leq& \|(\alpha+\gamma)(v_1+\partial_x v_1)m_1-\alpha(u_1-\partial_x u_1)n_1\|_{B^{1/2}_{2,\infty}\cap L^\infty}\\
\leq& |\alpha+\gamma |\|v_1+\partial_x v_1\|_{B^{ 1/2}_{2,1}}\|m_1\|_{B^{ 1/2}_{2,1}}+|\alpha|\|u_1-\partial_x u_1\|_{B^{ 1/2}_{2,1}}\|n_1\|_{B^{ 1/2}_{2,1}}\\
\leq& C(|\alpha|+|\gamma |) \|m_1\|_{B^{ 1/2}_{2,1}} \|n_1\|_{B^{ 1/2}_{2,1}}.
\end{split}
\end{equation}
Let us now estimate the terms $T_j(t)$ ($j=1,...,6$) in the inequality \eqref{4.2} one by one. Notice that
$$
(\psi_1 - \psi_2)(t,x)= (\alpha+\gamma ) \varphi(t,x)+\alpha \phi(t,x).
$$
For $T_1(t)$, it follows from Lemma   \ref{endpoint} that
\begin{equation}\label{4.4}
\begin{split}
T_1(t)  &\leq C \|m_2\|_{B^{1/2}_{2,1}}\Big((|\alpha|+|\gamma| ) \|\varphi(t,\cdot)\|_{B^{-1/2}_{2,1}}+|\alpha| \|\phi(t,\cdot)\|_{B^{-1/2}_{2,1}}\Big)\\
 &\leq C (|\alpha|+|\gamma| ) \|m_2\|_{B^{1/2}_{2,1}}\Big(\|v_1-v_2+\partial_x(v_1-v_2)\|_{B^{ 1/2}_{2,1}}\|m_1\|_{B^{1/2}_{2,1}}\\
 &\quad +\|v_2+\partial_x v_2\|_{B^{3/2}_{2,1}}\|\varpi\|_{B^{-1/2}_{2,1}}+\|u_2-\partial_x u_2\|_{B^{1/2}_{2,1}}\|\omega\|_{B^{-1/2}_{2,1}}\\
&\quad + \|u_1-u_2-\partial_x(u_1-u_2)\|_{B^{ 1/2}_{2,1}}\|n_1\|_{B^{1/2}_{2,1}}\Big)\\
&\leq C (|\alpha|+|\gamma| )\mathscr{D} (t) \|m_2\|_{B^{1/2}_{2,1}}\Big(\|\omega\|_{B^{-1/2}_{2,1}}+\|\varpi\|_{B^{-1/2}_{2,1}}\Big).
\end{split}
\end{equation}
For $T_2(t)$, we deduce by Lemma \ref{endpoint} that
\begin{equation}\label{4.5}
\begin{split}
T_2(t)&\leq\| \psi_2(t,x)-\overline{\psi_2}(t) \|_{B^{1/2}_{2,\infty}\cap L^\infty}\|\varpi\|_{B^{-1/2}_{2,1}}\\
&\leq C\left\| (\alpha+\gamma)(v_2+\partial_x v_2)m_2-\alpha(u_2-\partial_x u_2)n_2\right\|_{B^{1/2}_{2,1} }\|\varpi\|_{B^{-1/2}_{2,1}}\\
&\leq C (|\alpha|+|\gamma| ) \|m_2\|_{B^{1/2}_{2,1}}\|n_2\|_{B^{1/2}_{2,1}}\|\varpi\|_{B^{-1/2}_{2,1}} .
\end{split}
\end{equation}
For $T_3(t)$, we have
\begin{equation}\label{4.6}
\begin{split}
T_3(t) & \leq C (|\alpha|+|\gamma| )\|m_1\|_{B^{1/2}_{2,\infty}\cap L^\infty} \Big( \|\varphi(t,\cdot)\|_{B^{-1/2}_{2,1}}+ \|\phi(t,\cdot)\|_{B^{-1/2}_{2,1}} \Big)\\
&\leq C (|\alpha|+|\gamma| ) \mathscr{D} (t)\|m_1\|_{B^{1/2}_{2,1}}\Big(\|\omega\|_{B^{-1/2}_{2,1}}+\|\varpi\|_{B^{-1/2}_{2,1}}\Big).
\end{split}
\end{equation}
For $T_4(t)$, since the Schwartz space $\mathcal {S}$ is dense in $B_{2,1}^{\pm1/2}$, by using the Littlewood-Paley theory and the density argument similar to \eqref{3.26}, one can derive that
\begin{equation}\label{4.7}
\begin{split}
T_4(t)  \leq& C(|\alpha|+|\gamma| )\|m_1\|_{B^{1/2}_{2,\infty}\cap L^\infty} \left(|\overline{\varphi}(t)|+|\overline{\phi}(t)| \right)\\
\leq & C(|\alpha|+|\gamma| )\|m_1\|_{B^{1/2}_{2,\infty}\cap L^\infty} \Big(\Big| \langle v_1-v_2+\partial_x(v_1-v_2),m_1 \rangle_{B_{2,\infty}^{-1/2},B_{2,1}^{ 1/2}}\Big|\\
&+\Big|\langle v_2+\partial_x v_2 ,\varpi\rangle_{B_{2,\infty}^{1/2},B_{2,1}^{-1/2}}\Big|+\Big|\langle u_2-\partial_x u_2,\omega\rangle_{B_{2,\infty}^{1/2},B_{2,1}^{-1/2}}\Big|\\
&+\Big|\langle u_1-u_2-\partial_x(u_1-u_2),n_1\rangle_{B_{2,\infty}^{1/2},B_{2,1}^{-1/2}}\Big| \Big)\\
\leq & C(|\alpha|+|\gamma| )\|m_1\|_{B^{1/2}_{2,1}} \Big(\| v_1-v_2 \|_{B_{2,\infty}^{ 1/2}}\|m_1 \|_{B_{2,1}^{ 1/2}}+\|v_2 \|_{B_{2,\infty}^{3/2}} \|\varpi\|_{B_{2,1}^{-1/2}}\\
&+\| u_2 \|_{B_{2,\infty}^{3/2}}\|\omega\|_{B_{2,1}^{-1/2}} +\|u_1-u_2 \|_{B_{2,\infty}^{3/2}}\|n_1\|_{B_{2,1}^{-1/2}}\Big)\\
\leq & C(|\alpha|+|\gamma| )\|m_1\|_{B^{1/2}_{2,1}}\Big(\|\omega\|_{B_{2,\infty}^{-3/2}}\|m_1 \|_{B_{2,1}^{ 1/2}}+\|n_2 \|_{B_{2,\infty}^{-1/2}} \|\varpi\|_{B_{2,1}^{-1/2}}\\
&+\| m_2 \|_{B_{2,\infty}^{-1/2}}\|\omega\|_{B_{2,1}^{-1/2}} +\|\varpi\|_{B_{2,\infty}^{-1/2}}\|n_1\|_{B_{2,1}^{-1/2}}\Big)\\
\leq & C(|\alpha|+|\gamma| )\mathscr{D} (t) \Big(\|\omega\|_{B^{-1/2}_{2,1}}+\|\varpi\|_{B^{-1/2}_{2,1}}\Big).
\end{split}
\end{equation}
Inserting the estimates \eqref{4.3}-\eqref{4.7} into \eqref{4.2}, we obtain
\begin{equation*}
\begin{split}
 \|\varpi(t)\|_{B^{-1/2}_{2,\infty}}\leq & \|\varpi_0\|_{B^{-1/2}_{2,\infty}}+C\int_0^t(|\alpha|+|\gamma |) \|m_1 \|_{B^{ 1/2}_{2,1}} \|n_1 \|_{B^{ 1/2}_{2,1}}\|\varpi(t')\|_{B^{-1/2}_{2,\infty}}dt' \\
 &+ C\int_0^t(|\alpha|+|\gamma| ) \mathscr{D} (t')\Big(\|\omega(t')\|_{B^{-1/2}_{2,1}}+\|\varpi(t')\|_{B^{-1/2}_{2,1}}\Big)dt'.
\end{split}
\end{equation*}
Estimating in a similar manner, one can also investigate the second equation in \eqref{4.1} with respect to $\omega$ and derive the following estimate
\begin{equation*}
\begin{split}
 \|\omega(t)\|_{B^{-1/2}_{2,\infty}}\leq & \|\omega_0\|_{B^{-1/2}_{2,\infty}}+C\int_0^t(|\alpha|+|\gamma |) \|m_2 \|_{B^{ 1/2}_{2,1}} \|n_2 \|_{B^{ 1/2}_{2,1}}\|\omega(t')\|_{B^{-1/2}_{2,\infty}}dt' \\
 &+ C\int_0^t(|\alpha|+|\gamma| )\mathscr{D} (t')\Big(\|\varpi(t')\|_{B^{-1/2}_{2,1}}+\|\omega(t')\|_{B^{-1/2}_{2,1}}\Big)dt'.
\end{split}
\end{equation*}
Thereby it follows from the last two estimates that
\begin{equation}\label{4.8}
\begin{split}
 &\|\varpi(t)\|_{B^{-1/2}_{2,\infty}}+\|\omega(t)\|_{B^{-1/2}_{2,\infty}}\\
 &\quad\leq \|\varpi_0\|_{B^{-1/2}_{2,\infty}}+\|\omega_0\|_{B^{-1/2}_{2,\infty}} +C\int_0^t(|\alpha|+|\gamma |) \mathscr{D} (t')\Big(\|\varpi(t')\|_{B^{-1/2}_{2,\infty}}+\|\omega(t')\|_{B^{-1/2}_{2,\infty}}\Big)dt' \\
 &\qquad + C\int_0^t(|\alpha|+|\gamma| )\mathscr{D} (t')\Big(\|\omega(t')\|_{B^{-1/2}_{2,1}}+\|\varpi(t')\|_{B^{-1/2}_{2,1}}\Big)dt'.
\end{split}
\end{equation}
Applying the Gronwall's lemma to the above inequality and  using the logarithmic interpolation inequality (see Lemma \ref{logestimate}), we have
\begin{equation}\label{4.9}
\begin{split}
 &\|\varpi(t)\|_{B^{-1/2}_{2,\infty}}+\|\omega(t)\|_{B^{-1/2}_{2,\infty}}\\
 &\quad\leq   \Big(\|\varpi_0\|_{B^{-1/2}_{2,\infty}}+\|\omega_0\|_{B^{-1/2}_{2,\infty}}\Big)\exp{\left\{\int_0^t(|\alpha|+|\gamma| )\mathscr{D} (t') dt' \right\}}\\
 &\quad\quad + \int_0^t(|\alpha|+|\gamma| )\mathscr{D} (t')\exp{\left\{\int_{t'}^t(|\alpha|+|\gamma| )\mathscr{D} (\tau) d\tau  \right\}}\Big(\|\omega(t')\|_{B^{-1/2}_{2,1}}+\|\varpi(t')\|_{B^{-1/2}_{2,1}}\Big) dt'\\
 & \quad\leq   \Big(\|\varpi_0\|_{B^{-1/2}_{2,\infty}}+\|\omega_0\|_{B^{-1/2}_{2,\infty}}\Big)\exp{\left\{\int_0^t(|\alpha|+|\gamma| )\mathscr{D} (t') dt' \right\}}\\
 &\quad\quad + \int_0^t(|\alpha|+|\gamma| )\mathscr{D} (t') \exp{\left\{\int_{t'}^t(|\alpha|+|\gamma| )\mathscr{D} (\tau) d\tau  \right\}} \Big(\|\varpi(t')\|_{B^{-1/2}_{2,\infty}}+\|\omega(t')\|_{B^{-1/2}_{2,\infty}}\Big)\\
 &\quad\quad \times \log\left(e+ \frac{ \|\varpi\|_{B^{-1/2+\epsilon}_{2,\infty}}+\|\omega\|_{B^{-1/2+\epsilon}_{2,\infty}}}{\|\varpi\|_{B^{-1/2}_{2,\infty}}+\|\omega\|_{B^{-1/2}_{2,\infty}}}  \right) dt'\\
 &\quad \leq  \exp{\left\{\int_0^t(|\alpha|+|\gamma| )\mathscr{D} (t') dt' \right\}}\bigg[ \|\varpi_0\|_{B^{-1/2}_{2,\infty}}+\|\omega_0\|_{B^{-1/2}_{2,\infty}}+ \int_0^t(|\alpha|+|\gamma| )\mathscr{D} (t') \\
 &\quad \quad\times\exp{\left\{-\int_0^{t'}(|\alpha|+|\gamma| )\mathscr{D} (\tau) d\tau  \right\}} \Big(\|\varpi(t')\|_{B^{-1/2}_{2,\infty}}+\|\omega(t')\|_{B^{-1/2}_{2,\infty}}\Big)\\
 &\quad\quad \times \log\left(e+ \frac{\|\varpi\|_{B^{-1/2+\epsilon}_{2,\infty}}+\|\omega\|_{B^{-1/2+\epsilon}_{2,\infty}}}{\exp{\left\{-\int_0^{t'}(|\alpha|+|\gamma| )\mathscr{D} (\tau) d \tau \right\}}\Big(\|\varpi\|_{B^{-1/2}_{2,\infty}}+\|\omega\|_{B^{-1/2}_{2,\infty}}\Big)}  \right) dt'\bigg].
\end{split}
\end{equation}
Setting
$$
S(t)=\exp{\left\{-\int_0^t(|\alpha(t')|+|\gamma(t')| )\mathscr{D} (t') dt' \right\}}\Big(\|\varpi(t)\|_{B^{-1/2}_{2,\infty}}+\|\omega(t)\|_{B^{-1/2}_{2,\infty}}\Big).
$$
Due to the uniform bound \eqref{3.14} and the Fatou-type lemma, we have
\begin{equation*}
\begin{split}
\sum_{j=1,2}\Big(\|m_j(t)\|_{B^{1/2}_{2,1}}+\|n_j(t)\|_{B^{1/2}_{2,1}}\Big)\leq 4 C \hbar\left(\sum_{j=1,2}(\|m_j(0)\|_{B^{1/2}_{2,1}}+\|n_j(0)\|_{B^{1/2}_{2,1}}) \right)\leq 4 C \hbar\left(4+\mathscr{D} _0\right),
\end{split}
\end{equation*}
where the increasing function $\hbar(x)$ is defined in Section 3.

From the definition of $\mathscr{D} (t)$ and $S(t)$ we have
\begin{equation*}
\begin{split}
\mathscr{D} (t)\leq 16 C^2 \hbar^2\left(4+\mathscr{D} _0\right),
\end{split}
\end{equation*}
and
\begin{equation*}
\begin{split}
S(t)&\leq \sup_{t\in [0,T^{*}]}\left[\exp{\left\{-\int_0^t(|\alpha|+|\gamma| )\mathscr{D} (t') dt' \right\}}\Big(\|\varpi(t)\|_{B^{-1/2}_{2,\infty}}+\|\omega(t)\|_{B^{-1/2}_{2,\infty}}\Big)\right]\\
& \leq \sup_{t\in [0,T^{*}]}\Big(\|\varpi(t)\|_{B^{-1/2}_{2,\infty}}+\|\omega(t)\|_{B^{-1/2}_{2,\infty}}\Big)\\
&\leq \sup_{t\in [0,T^{*}]}\Big(\|\varpi(t)\|_{B^{-1/2+\epsilon}_{2,\infty}}+\|\omega(t)\|_{B^{-1/2+\epsilon}_{2,\infty}}\Big)\\
& \leq \sup_{t\in [0,T^{*}]}  \sum_{j=1,2}\Big(\|m_j(t)\|_{B^{1/2}_{2,1}}+\|n_j(t)\|_{B^{1/2}_{2,1}}\Big) \leq 4 C \hbar\left(4+\mathscr{D} _0\right).
\end{split}
\end{equation*}
It then follows from the inequality \eqref{4.9} that
\begin{equation}\label{4.10}
\begin{split}
 S(t)\leq S(0) +16 C^2 \hbar^2\left(4+\mathscr{D} _0\right)\int_0^t (|\alpha(t')|+|\gamma(t')| ) F(S(t')) dt',
\end{split}
\end{equation}
where
\begin{equation*}
F(x)=\begin{cases}
x\log\left(e+ \frac{4 C \hbar\left(4+\mathscr{D} _0\right)}{x}  \right),& x\in (0,4 C \hbar\left(4+\mathscr{D} _0\right)],\\
 0,& x=0.
\end{cases}
\end{equation*}
It is easy to verify that the function  $F(x)$ is an Osgood modulus of continuity on $[0,4 C \hbar\left(4+\mathscr{D} _0\right)]$, so we get by applying the Osgood lemma to inequality \eqref{4.10} that
\begin{equation}\label{4.11}
\begin{split}
 \int_{S(0)}^{S(t)}\frac{dr}{F(r)} &\leq 16 C^2 \hbar^2\left(4+\mathscr{D} _0\right)\int_{0}^t (|\alpha(t')|+|\gamma(t')| )dt'.
\end{split}
\end{equation}
Notice that for any given constant $C>0$, we have the inequality
$$
x\log\left(e+ \frac{C}{x}\right)\leq \log(e+C)(1-\log x), \quad \forall x\in (0,1].
$$
The left hand side of \eqref{4.11} can be estimated as
\begin{equation*}
\begin{split}
\int_{S(0)}^{S(t)}\frac{dr}{F(r)}
 &= \int_{S(0)}^{S(t)}\frac{d r}{r \log\left(e+ \frac{ 1}{ \frac{r}{4 C \hbar\left(4+\mathscr{D} _0\right)}}  \right)}\\
 &\geq \int_{S(0)}^{S(t)}\frac{d(\frac{r}{4 C \hbar\left(4+\mathscr{D} _0\right)})}{\log(e+1)\frac{r}{4 C \hbar\left(4+\mathscr{D} _0\right)}\left(1-\log\frac{r}{4 C \hbar\left(4+\mathscr{D} _0\right)}\right)}\\
 &=-\frac{1}{\log(e+1)}\log\left(\frac{1-\log\frac{S(t) }{4 C \hbar\left(4+\mathscr{D} _0\right)}}{1-\log \frac{S(0)}{4 C \hbar\left(4+\mathscr{D} _0\right)}}\right),
\end{split}
\end{equation*}
which combined with \eqref{4.11} yield that
\begin{equation}\label{4.12}
\begin{split}
 \log\left(\frac{1-\log\frac{S(0) }{4 C \hbar\left(4+\mathscr{D} _0\right)} }{1-\log\frac{S(t)}{4 C \hbar\left(4+\mathscr{D} _0\right)} }\right)\leq16\log(e+1) C^2 \hbar^2\left(4+\mathscr{D} _0\right)\int_{0}^t (|\alpha(t')|+|\gamma(t')| )dt'.
\end{split}
\end{equation}
Solving the inequality \eqref{4.12} leads to
\begin{equation*}
\begin{split}
 \frac{S(t)}{4 eC \hbar\left(4+\mathscr{D} _0\right)}\leq \left(\frac{S(0)}{4 eC \Theta\left(4+\mathscr{D} _0\right)}\right)^{\exp{\{-16 C^2\log(e+1) \hbar^2\left(4+\mathscr{D} _0\right)\int_{0}^t (|\alpha(t')|+|\gamma(t')| )dt'\}}}.
\end{split}
\end{equation*}
From the definition of $S(t)$, the previous inequality implies the desired inequality. This completes the proof of Lemma \ref{uniqueness}.
\end{proof}

\section{Proof of Theorem \ref{theorem1}: Continuity}
In this section, we investigate the continuous property of the data-to-solution map $(m,n)=\Lambda(m_0,n_0)$ for the 2NSQQ system \eqref{P2NSQQ}.

\begin{proof} [Proof of Theorem \ref{theorem1} (Continuity)]
The proof of the continuity of the data-to-solution map will be divided into the following two steps.

\textbf{Step 1:} Continuity of the data-to-solution map in $C_T(X_{-1/2,1})$.  Assume that $(m_0,n_0)\in X_{1/2,1}$, $r>0$ is an arbitrary fixed number, and we define the closed bounded ball $B_r(m_0,n_0)\subseteq X_{1/2,1}$ as
\begin{equation*}
\begin{split}
B_r(m_0,n_0)\doteq\Big\{(\overline{m}_0,\overline{n}_0)\in X_{1/2,1};~~ \|m_0-\overline{m}_0\|_{B_{2,1}^{1/2}}+\|n_0-\overline{n}_0\|_{B_{2,1}^{1/2}}\leq r\Big\}.
\end{split}
\end{equation*}

We make a \textbf{Claim}: there is a $T>0$ and a $M>0$ such that for any $(\overline{m}_0,\overline{n}_0)\in B_r(m_0,n_0)$, the solution $(\overline{m},\overline{n})=\Lambda (\overline{m}_0,\overline{n}_0)$ to the system \eqref{P2NSQQ} belongs to $C_T(X_{1/2,1})$ and satisfies
\begin{equation}\label{5.1}
\begin{split}
\sup_{t\in [0,T]} \Big(\|\overline{m}(t)\|_{B_{2,1}^{1/2}} + \|\overline{n}(t)\|_{B_{2,1}^{1/2}}\Big)\leq M.
\end{split}
\end{equation}

Indeed, it follows from the uniform bound that, for any $(\overline{m}_0,\overline{n}_0)\in \partial B_r(m_0,n_0) $, i.e., $ \|\overline{m}_0\|_{B_{2,1}^{1/2}}+\|\overline{n}_0\|_{B_{2,1}^{1/2}}=\|m_0\|_{B_{2,1}^{1/2}}+\|n_0\|_{B_{2,1}^{1/2}}+r$, one can find a lifespan $T^*$ satisfying
\begin{equation*}
\begin{split}
\int_0^{T^*}(|\alpha(t')|+|\gamma(t')|)dt' \leq \frac{\ln 2}{12C^3\hbar^2\Big(\|m_0\|_{B_{2,1}^{1/2}}+\|n_0\|_{B_{2,1}^{1/2}}+r\Big)}
\end{split}
\end{equation*}
such that the corresponding solution $(\overline{m},\overline{n})$ is bounded by
\begin{equation*}
\begin{split}
 \|\overline{m}\|_{L^\infty_{T^*}(B^{1/2}_{2,1})}+\|\overline{n}\|_{L^\infty_{T^*}(B^{1/2}_{2,1})}\leq 2C\hbar\Big(\|m_0\|_{B_{2,1}^{1/2}}+\|n_0\|_{B_{2,1}^{1/2}}+r\Big),
\end{split}
\end{equation*}
where $\hbar(x)$ is a modulus of continuity defined in (1) of Theorem \ref{theorem1}. Notice that the lifespan $T^*$ is a decreasing function with resect to the norm of the initial data $(m_0,n_0)$. Thereby for any solution $(\overline{m},\overline{n})$ to the system \eqref{P2NSQQ} associated with $(\overline{m}_0,\overline{n}_0)\in B_r(m_0,n_0) $, one can restrict $(\overline{m},\overline{n})$ on the interval $[0,T^*]\subset [0,\overline{T}]$, and
\begin{equation}\label{5.2}
\begin{split}
\sup_{t\in [0,T^*]} \Big(\|\overline{m}(t)\|_{B_{2,1}^{1/2}} + \|\overline{n}(t)\|_{B_{2,1}^{1/2}}\Big)&\leq \sup_{t\in [0,\overline{T}]} \Big(\|\overline{m}(t)\|_{B_{2,1}^{1/2}} + \|\overline{n}(t)\|_{B_{2,1}^{1/2}}\Big)\\
&\leq  2C\hbar\Big(\|\overline{m}_0\|_{B_{2,1}^{1/2}}+\|\overline{n}_0\|_{B_{2,1}^{1/2}}\Big)\\
&\leq 2C\hbar\Big(\|m_0\|_{B_{2,1}^{1/2}}+\|n_0\|_{B_{2,1}^{1/2}}+r\Big)\doteq M.
\end{split}
\end{equation}
Then we prove the Claim by choosing $T=T^*$.

Combining the above conclusion with Lemma   \ref{uniqueness}, we obtain
\begin{equation}\label{5.3}
\begin{split}
&\|\Lambda (m_0,n_0)-\Lambda (\overline{m}_0 ,\overline{n}_0 )\|_{X_{-1/2,\infty}} =\|(m-\overline{m})(t)\|_{B_{2,\infty}^{-1/2}} + \|(n-\overline{n})(t)\|_{B_{2,\infty}^{-1/2}}\\
&\quad \leq  C \hbar\left(4+4M^2\right)\exp{\left\{C\hbar^2\left(4+4M^2\right) \right\}} \\
&\quad\quad \times\left(\frac{\|m_0-m_0'\|_{B^{-1/2}_{2,\infty}}+\|n_0-n_0'\|_{B^{-1/2}_{2,\infty}}}{C \hbar\left(4+4M^2\right)}\right)^{\exp{\left\{-C \hbar\left(4+4M^2\right)^2\right\}}}.
\end{split}
\end{equation}
By using the real interpolation inequality in Lemma \ref{realintepolation}, we deduce from \eqref{5.2}-\eqref{5.3} and the embedding $X_{-1/2+\theta,1}\hookrightarrow X_{-1/2,1}$ for any $\theta\in (0,1)$ that
\begin{equation}\label{5.4}
\begin{split}
&\|\Lambda (m_0,n_0)-\Lambda (\overline{m}_0,\overline{n}_0)\|_{X_{-1/2,1}} \leq \|\Lambda (m_0,n_0)-\Lambda (\overline{m}_0,\overline{n}_0)\|_{X_{-1/2+\theta,1}}\\
&\quad \leq C\|\Lambda (m_0,n_0)-\Lambda (\overline{m}_0,\overline{n}_0)\|_{X_{-1/2,\infty}}^{1-\theta}\|\Lambda (m_0,n_0)-\Lambda (\overline{m}_0,\overline{n}_0)\|_{X_{1/2,\infty}}^\theta\\
&\quad \leq  C \hbar^{1-\theta}\left(4+4M^2\right)\exp{\left\{C(1-\theta)\hbar^2\left(4+4M^2\right) \right\}} \\
&\quad\quad \times\left(\frac{\|m_0-\overline{m}_0 \|_{B^{-1/2}_{2,\infty}}+\|n_0-\overline{n}_0 \|_{B^{-1/2}_{2,\infty}}}{C \hbar\left(4+4M^2\right)}\right)^{\exp{\left\{-C (1-\theta)\hbar\left(4+4M^2\right)^2\right\}}}\\
&\qquad \times \Big(\|m \|_{B_{2,1}^{1/2}}+\|\overline{m} \|_{B_{2,1}^{1/2}}+\|n\|_{B_{2,1}^{1/2}}+\|\overline{n} \|_{B_{2,1}^{1/2}}\Big)^\theta\\
&\quad \leq  C M^\theta\hbar^{1-\theta}\left(4+4M^2\right)\exp{\left\{C(1-\theta)\hbar^2\left(4+4M^2\right) \right\}} \\
&\quad\quad \times\left(\frac{\|m_0-\overline{m}_0 \|_{B^{-1/2}_{2,\infty}}+\|n_0-\overline{n}_0 \|_{B^{-1/2}_{2,\infty}}}{C \hbar\left(4+4M^2\right)}\right)^{\exp{\left\{-C (1-\theta)\hbar^2\left(4+4M^2\right)\right\}}} ,
\end{split}
\end{equation}
which implies that the map $\Lambda(\cdot): X_{1/2,1}\rightarrow C_T(X_{-1/2,1})$ is H\"{o}lder continuous.

\textbf{Step 2:} Continuity of the data-to-solution map in $C_T(X_{1/2,1})$. Assume  $(m_{0,\infty},n_{0,\infty})\in X_{1/2,1}$ and $(m_{0,k},n_{0,k})$ tends to $(m_{0,\infty},n_{0,\infty})$ in $X_{1/2,1}$. We denote by $(m_k,n_k)$ the solution with respect to the initial data $(m_{0,k},n_{0,k})$. Note that the sequence $(m_{0,k},n_{0,k})$ taks values in a closed bounded ball $B_r(m_{0,\infty},n_{0,\infty})\subseteq X_{1/2,1}$ for some $r>0$, following the similar procedure in the first step, one can find $T,M>0$ such that for all $k\in\mathbb{N}$,
\begin{equation}\label{5.5}
\begin{split}
\sup_{t\in [0,T]}\Big(\|m_k(t)\|_{B_{2,1}^{1/2}} + \|n_k(t)\|_{B_{2,1}^{1/2}}\Big)\leq M.
\end{split}
\end{equation}
To prove that $(m_{k},n_{k})$ tends to $(m_{\infty},n_{\infty})$ in $C_T(X_{1/2,1})$, we shall follow the Kato's  method \cite{kato1975quasi} to decompose the solution of \eqref{P2NSQQ} as $(m_k,n_k)=(z_k^1,w_k^1)+(z_k^2,w_k^2)$ with
\begin{equation}\label{5.6}
\begin{cases}
\partial_t z_k^1+\rho_k \partial_x z_k^1=m_\infty\left(\psi_\infty(t,x)-\overline{\psi_\infty}(t)\right)-m_k\left(\psi_k(t,x)-\overline{\psi_k}(t)\right) ,\\

\partial_tw_k^1+\rho_k \partial_x w_k^1=n_\infty\left(\psi_\infty(t,x)-\overline{\psi_\infty}(t)\right)-n_k\left(\psi_k(t,x)-\overline{\psi_k}(t)\right) ,\\

z_k^1(x,0)= m_{0,k}(x)-m_{0,\infty}(x),\\
w_k^1(x,0)= n_{0,k}(x)-n_{0,\infty}(x),
\end{cases}
\end{equation}
and
\begin{equation}\label{5.7}
\begin{cases}
\partial_t z_k^2+\rho_k \partial_x z_k^2=-m_\infty\left(\psi_\infty(t,x)-\overline{\psi_\infty}(t)\right),\\

\partial_tw_k^2+\rho_k \partial_x w_k^2=-n_\infty\left(\psi_\infty(t,x)-\overline{\psi_\infty}(t)\right),\\

z_k^2(x,0)=  m_{0,\infty}(x),\\
w_k^2(x,0)=  n_{0,\infty}(x).
\end{cases}
\end{equation}
Using \eqref{5.5} and applying Lemma   \ref{lem:priorieti} to the first equation in  \eqref{5.6}, we get
\begin{equation}\label{5.8}
\begin{split}
 \|z_{k}^1(t)\|_{B^{ 1/2}_{2,1}}
 \leq & \exp{\left\{C\int_0^t\|\partial_x\rho_k\|_{B^{1/2}_{2,\infty}\cap L^\infty}dt'\right\}} \bigg(\|m_{0,k} -m_{0,\infty}\|_{B^{ 1/2}_{2,1}} \\
 &+C\int_0^t  \|m_\infty\left(\psi_\infty(t,x)-\overline{\psi_\infty}(t)\right)-m_k\left(\psi_k(t,x)-\overline{\psi_k}(t)\right)\|_{B^{ 1/2}_{2,1}}ds\bigg) .
\end{split}
\end{equation}
By using the fact that $B_{2,1}^{1/2} $ is a Banach space, we have  for any $k\in \overline{\mathbb{N}}$ and $t\in[0,T]$
\begin{equation}\label{5.9}
\begin{split}
\int_0^t\|\partial_x\rho_k\|_{B^{1/2}_{2,\infty}\cap L^\infty}dt' &\leq C \int_0^t\|(\alpha+\gamma)(v_k+\partial_x v_k)m_k-\alpha(u_k-\partial_x u_k)n_k\|_{B^{1/2}_{2,1}}dt'\\
&\leq C \int_0^t(|\alpha|+|\gamma|)(\|m_k\|_{B^{1/2}_{2,1}}+\|n_k\|_{B^{1/2}_{2,1}})^2dt' \leq C M^2.
\end{split}
\end{equation}
For the second term on the right hand side of \eqref{5.8}, we have
\begin{equation}\label{5.10}
\begin{split}
&\|m_\infty\left(\psi_\infty(t,x)-\overline{\psi_\infty}(t)\right)-m_k\left(\psi_k(t,x)-\overline{\psi_k}(t)\right)\|_{B^{ 1/2}_{2,1}}\\
&\quad \leq  \|(m_\infty-m_k)\left(\psi_\infty(t,x)-\overline{\psi_\infty}(t)\right)\|_{B^{ 1/2}_{2,1}}\\
&\quad\quad+\|m_k[\psi_\infty(t,x)-\psi_k(t,x)-(\overline{\psi_\infty}(t)-\overline{\psi_k}(t))]\|_{B^{ 1/2}_{2,1}} \doteq \mbox{I}_k(t)+\mbox{II}_k(t).
\end{split}
\end{equation}
For $\mbox{I}_k(t)$, we have
\begin{equation}\label{5.11}
\begin{split}
\mbox{I}_k(t) &\leq C \|m_k- m_\infty\|_{B^{ 1/2}_{2,1}}\Big(\| \psi_\infty(t,x)\|_{B^{ 1/2}_{2,1}}+\overline{\psi_\infty}(t) \|_{L^\infty}\Big)\\
&\leq C(|\alpha|+|\gamma| )\|m_k- m_\infty\|_{B^{ 1/2}_{2,1}}\Big(\|m_\infty\|_{B^{ 1/2}_{2,1}}+ \|n_\infty\|_{B^{ 1/2}_{2,1}}\Big)\\
&\leq CM(|\alpha|+|\gamma| ) \|m_k- m_\infty\|_{B^{ 1/2}_{2,1}}.
\end{split}
\end{equation}
For $\mbox{II}_k(t)$, we first note that
\begin{equation*}
\begin{split}
\psi_k(t,x)- \psi_\infty(t,x)= (\alpha+\gamma ) \varphi(t,x)+\alpha \phi(t,x),
\end{split}
\end{equation*}
where $\varphi(t,x)= [v_k-v_\infty+\partial_x(v_k-v_\infty)]m_k+(v_\infty+\partial_x v_\infty)(m_k-m_\infty)$, $
\phi(t,x)=[u_k-u_\infty-\partial_x(u_k-u_\infty)]n_k+(u_\infty-\partial_x u_\infty)(n_k-n_\infty)$. Then one can estimate $\mbox{II}_k(t)$ as follows
\begin{equation}\label{5.12}
\begin{split}
\mbox{II}_k(t)\leq& C \|m_k\|_{B^{ 1/2}_{2,1}}\Big(\|\psi_\infty(t,\cdot)-\psi_k(t,\cdot)\|_{B^{ 1/2}_{2,1}}+\|\psi_\infty(t,\cdot)-\psi_k(t,\cdot)\|_{L^\infty}\Big)\\
\leq& C (|\alpha|+|\gamma| )\|m_k\|_{B^{ 1/2}_{2,1}}\Big(\|v_k-v_\infty+\partial_x(v_k-v_\infty)\|_{B^{ 1/2}_{2,1}}\|m_k\|_{B^{ 1/2}_{2,1}}\\
&+\|v_\infty+\partial_x v_\infty\|_{B^{ 1/2}_{2,1}}\|m_k-m_\infty\|_{B^{ 1/2}_{2,1}}+\|u_\infty-\partial_x u_\infty\|_{B^{ 1/2}_{2,1}}\|n_k-n_\infty \|_{B^{ 1/2}_{2,1}}\\
&+  \|u_k-u_\infty-\partial_x(u_k-u_\infty)\|_{B^{ 1/2}_{2,1}}\|n_k\|_{B^{ 1/2}_{2,1}} \Big)\\
\leq& C (|\alpha|+|\gamma| )\|m_k\|_{B^{ 1/2}_{2,1}}\Big(\|n_k-n_\infty \|_{B^{1/2}_{2,1}}\|m_k\|_{B^{ 1/2}_{2,1}} +\|n_\infty \|_{B^{ 3/2}_{2,1}}\|m_k-m_\infty\|_{B^{ 1/2}_{2,1}}\\
&+\|m_\infty \|_{B^{3/2}_{2,1}}\|n_k-n_\infty \|_{B^{ 1/2}_{2,1}}+  \|m_k-m_\infty \|_{B^{1/2}_{2,1}}\|n_k\|_{B^{ 1/2}_{2,1}} \Big)\\
\leq& C M^2(|\alpha|+|\gamma| ) \Big(\|n_k-n_\infty \|_{B^{1/2}_{2,1}} +\|m_k-m_\infty \|_{B^{1/2}_{2,1}} \Big).
\end{split}
\end{equation}
Putting the estimates \eqref{5.9}-\eqref{5.12} together, we deduce that
\begin{equation*}
\begin{split}
 \|z_{k}^1(t)\|_{B^{ 1/2}_{2,1}}
 \leq & e^{CM^2} \left(\|m_{0,k} -m_{0,\infty}\|_{B^{ 1/2}_{2,1}} + \int_0^t(|\alpha|+|\gamma| )(\|n_k-n_\infty \|_{B^{1/2}_{2,1}} +\|m_k-m_\infty \|_{B^{1/2}_{2,1}} )dt'\right) .
\end{split}
\end{equation*}
In a similar manner, we have for the second equation in \eqref{5.6} that
\begin{equation*}
\begin{split}
 \|w_{k}^1(t)\|_{B^{ 1/2}_{2,1}}
 \leq & e^{CM^2} \left(\|n_{0,k} -n_{0,\infty}\|_{B^{ 1/2}_{2,1}} + \int_0^t(|\alpha|+|\gamma| )(\|n_k-n_\infty \|_{B^{1/2}_{2,1}} +\|m_k-m_\infty \|_{B^{1/2}_{2,1}} )dt'\right) .
\end{split}
\end{equation*}
Thereby we obtain
\begin{equation}\label{5.13}
\begin{split}
\|z_{k}^1(t)\|_{B^{ 1/2}_{2,1}}+ \|w_{k}^1(t)\|_{B^{ 1/2}_{2,1}}
 \leq & e^{CM^2} \bigg(\|m_{0,k} -m_{0,\infty}\|_{B^{ 1/2}_{2,1}} +\|n_{0,k} -n_{0,\infty}\|_{B^{ 1/2}_{2,1}} \\
 &+ \int_0^t(|\alpha|+|\gamma| )(\|n_k-n_\infty \|_{B^{1/2}_{2,1}} +\|m_k-m_\infty \|_{B^{1/2}_{2,1}} )dt'\bigg) .
\end{split}
\end{equation}

On the other hand, it is not difficult to verify that the terms on right hand side of \eqref{5.7} are uniformly bounded in $L^1_T(X_{1/2,1})$, and
 \begin{equation}\label{5.14}
\begin{split}
\sup_{k\geq 1}\|\partial_x \rho_k\|_{B_{2,1}^{1/2}}\leq C(|\alpha|+|\gamma| )\sup_{k\geq 1}\|m_k\|_{B^{ 1/2}_{2,1}}\|n_k\|_{B^{ 1/2}_{2,1}}\leq CM^2(|\alpha|+|\gamma| ).
\end{split}
\end{equation}
Moreover, by using the interpolation inequality
$\|f\|_{B^{\theta s_1+ (1-\theta)s_2}_{p,r}}\leq C
 \|f\|_{B^{s_1}_{p,r}}^\theta \|f\|_{B^{s_2}_{p,r}}^{1-\theta}
$,  for any $\theta\in (0,1)$ (cf. \cite{chemin2004localization,64}), one can estimate the difference of $\rho_k$ and $\rho_\infty$ by
 \begin{eqnarray}\label{5.15}
\begin{split}
 \int_0^T\|\rho_k(t)-\rho_\infty(t)\|_{B_{2,1}^{1/2}}dt\leq & \int_0^T\Big(\|[v_k-v_\infty+\partial_x(v_k-v_\infty)]m_k+(v_\infty+\partial_x v_\infty)(m_k-m_\infty)\|_{B_{2,1}^{-1/2}}\\
&+\|[u_k-u_\infty-\partial_x(u_k-u_\infty)]n_k+(u_\infty-\partial_x u_\infty)(n_k-n_\infty)\|_{B_{2,1}^{-1/2}}\Big)dt\\
\leq & \int_0^T\bigg[ (\|v_k-v_\infty\|_{B_{2,1}^{-1/2}}+\|\partial_x(v_k-v_\infty)\|_{B_{2,1}^{-1/2}} )^{1-\theta}\|m_k\|_{B_{2,1}^{ 1/2}}^\theta\\
&+ (\|v_\infty\|_{B_{2,1}^{-1/2}}+\|\partial_x v_\infty\|_{B_{2,1}^{1/2}} )^\theta\|m_k-m_\infty\|_{B_{2,1}^{-1/2}}^{1-\theta}\\
&+ (\|u_k-u_\infty\|_{B_{2,1}^{-1/2}}+\|\partial_x(u_k-u_\infty)\|_{B_{2,1}^{-1/2}} )^{1-\theta}\|n_k\|_{B_{2,1}^{1/2}}^\theta\\
&+ (\|u_\infty\|_{B_{2,1}^{1/2}}+\|\partial_x u_\infty\|_{B_{2,1}^{1/2}} )^\theta\|n_k-n_\infty\|_{B_{2,1}^{-1/2}} ^{1-\theta}\bigg]dt\\
\leq & CM ^\theta\int_0^T\Big( \|m_k-m_\infty\|_{B_{2,1}^{-1/2}}^{1-\theta} +\|n_k-n_\infty\|_{B_{2,1}^{-1/2}} ^{1-\theta} \Big)dt.
\end{split}
\end{eqnarray}
As $(m_{k},n_{k})$ tends to $(m_{\infty},n_{\infty})$ strongly in $C_T(X_{-1/2,1})$, the last inequality implies that $\rho_k$ tends to $\rho_\infty$ strongly in $L^1_T(B_{2,1}^{1/2})$.  To deal with the convergence of the system \eqref{5.7}, we recall the following useful lemma which was firstly proposed by Danchin.
\begin{lemma} [\cite{danchin2003note}] \label{danchin}
Denote $\overline{\mathbb{N}}= \mathbb{N}\cup \infty$. Let $(v_k)_{k\in \overline{\mathbb{N}}}$ be a sequence of functions in $C_T(B_{2,1}^{1/2})$. Assume that $v_k$ solves the following equation
\begin{equation*}
\begin{cases}
 \partial_t v_k+a_k\partial_xv_k=f,\\
 v_k(x,0)=v_0
\end{cases}
\end{equation*}
with $v_0\in B_{2,1}^{1/2}$, $f\in L^1_T(B_{2,1}^{1/2})$ and that $\sup_{k\geq 1} \|\partial_x a_k(t)\|_{B_{2,1}^{1/2}}\leq \beta (t)$ for some $\beta\in L^1(0,T)$. If in addition $a_k$ tends to $a_\infty$ in $L^1_T(B_{2,1}^{1/2})$, then $v_k$ tends to $v_\infty$ in $C_T(B_{2,1}^{1/2})$.
\end{lemma}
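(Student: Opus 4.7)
The plan is to subtract the two transport equations and apply the linear transport estimate of Lemma~\ref{lem:priorieti} to the difference, circumventing the borderline regularity of the resulting source term via a Littlewood--Paley regularization of $v_\infty$. Setting $w_k \doteq v_k - v_\infty$, one finds
\[
\partial_t w_k + a_k\,\partial_x w_k \;=\; (a_\infty - a_k)\,\partial_x v_\infty, \qquad w_k|_{t=0}=0.
\]
A direct application of Lemma~\ref{lem:priorieti} in $B_{2,1}^{1/2}$ would require $(a_\infty-a_k)\,\partial_x v_\infty\in L^1_T(B_{2,1}^{1/2})$, but this fails at the level of Moser's estimate (Lemma~\ref{moser}): since $\partial_x v_\infty\in C_T(B_{2,1}^{-1/2})$ only, the condition $s_1+s_2>0$ is violated with $s_1=-1/2$ and $s_2=1/2$.

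To overcome this, I would split for each $N\in\mathbb N$ the source into a smooth and a rough piece by writing $v_\infty=S_N v_\infty+(\mathrm{Id}-S_N)v_\infty$, and correspondingly decompose $w_k=w_k^{1,N}+w_k^{2,N}$ where each piece solves the transport equation with zero initial datum and source $(a_\infty-a_k)\,\partial_x S_N v_\infty$, respectively $(a_\infty-a_k)\,\partial_x(\mathrm{Id}-S_N)v_\infty$. For the first piece, $\partial_x S_N v_\infty$ belongs to $B_{2,1}^{1/2}$ with norm $C_N\|v_\infty\|_{B_{2,1}^{1/2}}$, so the algebra property of $B_{2,1}^{1/2}$ and Lemma~\ref{lem:priorieti} yield
\[
\|w_k^{1,N}\|_{L^\infty_T(B_{2,1}^{1/2})}\;\le\; C_N\exp\!\Big(C\!\int_0^T\!\beta(s)\,ds\Big)\|a_\infty-a_k\|_{L^1_T(B_{2,1}^{1/2})}\;\xrightarrow{k\to\infty}\;0
\]
for each fixed $N$, by hypothesis.

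For the remainder $w_k^{2,N}$, the plan is to establish smallness uniform in $k$ as $N\to\infty$. The key ingredients are the uniform bound $\sup_k\|v_k\|_{L^\infty_T(B_{2,1}^{1/2})}<\infty$ coming from Lemma~\ref{lem:priorieti} applied to the original equation, the convergence $\|(\mathrm{Id}-S_N)v_\infty\|_{B_{2,1}^{1/2}}\to 0$ as $N\to\infty$, and a Bony paraproduct decomposition of the rough source. The high--high resonant contribution, which escapes the Moser estimate, is handled by the off-diagonal duality pairing already exploited in the derivation of \eqref{3.26}--\eqref{3.27}; this should produce a bound of the form
\[
\sup_{k\ge 1}\|w_k^{2,N}\|_{L^\infty_T(B_{2,1}^{1/2})}\;\le\; C\,\eta_N\;\xrightarrow{N\to\infty}\;0
\]
with $C$ depending only on $\sup_k\|v_k\|_{L^\infty_T(B_{2,1}^{1/2})}$, $\|v_\infty\|_{L^\infty_T(B_{2,1}^{1/2})}$, and $\|\beta\|_{L^1(0,T)}$.

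The main obstacle is precisely this last step: controlling the resonant block of $(a_\infty-a_k)\,\partial_x(\mathrm{Id}-S_N)v_\infty$ at the critical threshold $s_1+s_2=0$, which is the same structural difficulty encountered in the convergence argument of the approximate solutions in Section~3. Once it is resolved, a diagonal argument---fix $N$ so that $\sup_k\|w_k^{2,N}\|_{L^\infty_T(B_{2,1}^{1/2})}<\varepsilon/2$, then take $k$ large so that $\|w_k^{1,N}\|_{L^\infty_T(B_{2,1}^{1/2})}<\varepsilon/2$---delivers $v_k\to v_\infty$ in $C_T(B_{2,1}^{1/2})$, as desired.
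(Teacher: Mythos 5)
First, a point of comparison: the paper does not prove this lemma at all --- it is imported verbatim from \cite{danchin2003note} --- so there is no in-paper proof to match; your attempt has to stand on its own. It does not: you yourself flag the estimate for $w_k^{2,N}$ as unresolved, and that gap cannot be closed along the lines you propose. The source term $(a_\infty-a_k)\,\partial_x(\mathrm{Id}-S_N)v_\infty$ is the product of a $B_{2,1}^{1/2}$ (indeed $B_{2,1}^{3/2}$) function with a distribution that is only in $B_{2,1}^{-1/2}$; by Bony's decomposition every piece of such a product ($T_fg$, $T_gf$ and the remainder) lands in $B_{2,1}^{-1/2}$ at best, so the product simply need not belong to $B_{2,1}^{1/2}$, no matter how small $\|(\mathrm{Id}-S_N)v_\infty\|_{B_{2,1}^{1/2}}$ is. Consequently Lemma \ref{lem:priorieti} can only control $w_k^{2,N}$ in $B_{2,1}^{-1/2}$ (or $B_{2,\infty}^{-1/2}$ via Lemma \ref{endpoint}), a full derivative below the target norm; the duality pairings of \eqref{3.26}--\eqref{3.27} that you invoke likewise produce bounds in negative-index norms and cannot promote the estimate to $B_{2,1}^{1/2}$. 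Interpolating the resulting low-norm smallness with the uniform $B_{2,1}^{1/2}$ bound only yields convergence in $B_{2,1}^{s'}$ for $s'<1/2$, which is exactly the endpoint loss the lemma is designed to avoid.

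The defect is where you put the frequency cut-off. Danchin's argument (and the paper's own Kato-type scheme in \eqref{5.6}--\eqref{5.7}, which mimics it) truncates the \emph{data and the source of the equation}, not the factor $v_\infty$ inside the commutator term. Concretely, let $\tilde v_k^{N}$ solve $\partial_t \tilde v_k^{N}+a_k\partial_x \tilde v_k^{N}=S_Nf$ with $\tilde v_k^{N}(0)=S_Nv_0$. Then: (i) $v_k-\tilde v_k^{N}$ solves the transport equation with data $(\mathrm{Id}-S_N)v_0$ and source $(\mathrm{Id}-S_N)f$, so Lemma \ref{lem:priorieti} together with $\sup_k\|\partial_xa_k\|_{B_{2,1}^{1/2}}\le\beta\in L^1$ makes it small in $C_T(B_{2,1}^{1/2})$ \emph{uniformly in $k$} as $N\to\infty$; (ii) by propagation of regularity, $\tilde v_\infty^{N}\in C_T(B_{2,1}^{3/2})$ with an $N$-dependent bound, so $\partial_x\tilde v_\infty^{N}\in C_T(B_{2,1}^{1/2})$ and the difference $\tilde v_k^{N}-\tilde v_\infty^{N}$ obeys a transport equation whose source $(a_\infty-a_k)\partial_x\tilde v_\infty^{N}$ is now \emph{subcritical}: the algebra property of $B_{2,1}^{1/2}$ bounds it by $C_N\|a_\infty-a_k\|_{B_{2,1}^{1/2}}$, which tends to $0$ in $L^1_T$ for fixed $N$. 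Your diagonal argument then closes the proof. In short, the regularization must be arranged so that the limiting object being differentiated in the cross term gains a derivative; truncating $v_\infty$ only inside the source leaves a genuinely critical product that defeats the top-norm estimate.
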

It then follows from \eqref{5.14}, \eqref{5.15} and Lemma   \ref{danchin} that
 \begin{equation}\label{5.16}
\begin{split}
(z_k^2,w_k^2)\stackrel{k\rightarrow\infty}{\longrightarrow}  (m_{\infty},n_{\infty})\quad  \mbox{strongly in}\quad C_T(X_{1/2,1}).
\end{split}
\end{equation}
By the decomposition of $(m_k,n_k)$ we have
 \begin{equation*}
\begin{split}
 \|m_k-m_\infty \|_{B^{1/2}_{2,1}}+\|n_k-n_\infty \|_{B^{1/2}_{2,1}}  \leq \|z_k^1\|_{B^{1/2}_{2,1}}+\|w_k^1\|_{B^{1/2}_{2,1}}+\|z_k^2-m_\infty \|_{B^{1/2}_{2,1}} +\|w_k^2-n_\infty \|_{B^{1/2}_{2,1}},
\end{split}
\end{equation*}
it then follows from \eqref{5.13} that
\begin{equation*}
\begin{split}
&\|z_{k}^1(t)\|_{B^{ 1/2}_{2,1}}+ \|w_{k}^1(t)\|_{B^{ 1/2}_{2,1}}\\
 &\quad \leq e^{CM^2} \bigg(\|m_{0,k} -m_{0,\infty}\|_{B^{ 1/2}_{2,1}} +\|n_{0,k} -n_{0,\infty}\|_{B^{ 1/2}_{2,1}}+ \int_0^t(|\alpha|+|\gamma| )(\|z_k^1\|_{B^{1/2}_{2,1}}+\|w_k^1\|_{B^{1/2}_{2,1}})dt' \\
 &\quad\quad+\int_0^t(|\alpha|+|\gamma| )(\|z_k^2-m_\infty \|_{B^{1/2}_{2,1}} +\|w_k^2-n_\infty \|_{B^{1/2}_{2,1}} )dt'\bigg),
\end{split}
\end{equation*}
 which together with the Gronwall's lemma yield that
 \begin{equation*}
\begin{split}
&\|z_{k}^1(t)\|_{B^{ 1/2}_{2,1}}+ \|w_{k}^1(t)\|_{B^{ 1/2}_{2,1}}\leq e^{e^{CM^2}} \bigg(\|m_{0,k} -m_{0,\infty}\|_{B^{ 1/2}_{2,1}} +\|n_{0,k} -n_{0,\infty}\|_{B^{ 1/2}_{2,1}})\\
 &\quad + \int_0^t (|\alpha|+|\gamma| )(\|z_k^2-m_\infty \|_{B^{1/2}_{2,1}} +\|w_k^2-n_\infty \|_{B^{1/2}_{2,1}})dt'\bigg),
\end{split}
\end{equation*}
which implies that
\begin{equation}\label{5.17}
\begin{split}
(z_k^1,w_k^1)\stackrel{k\rightarrow\infty}{\longrightarrow}  (0,0)\quad  \mbox{strongly in}\quad C_T(X_{1/2,1}).
\end{split}
\end{equation}
From \eqref{5.16} and \eqref{5.17}, we get
 \begin{equation*}
\begin{split}
(m_k,n_k)\stackrel{k\rightarrow\infty}{\longrightarrow}  (m_{\infty},n_{\infty})\quad  \mbox{strongly in}\quad C_T(X_{1/2,1}).
\end{split}
\end{equation*}
Combining the first step and second step, we completed the proof of Theorem \ref{theorem1}.
\end{proof}

%%%%%%%%%%%%%%%%%%%%%%%%%%%%%%%%%%%%%%%%%%%%%%%%%%
\section{blow-up phenomena}
%%%%%%%%%%%%%%%%%%%%%%%%%%%%%%%%%%%%%%%%%%%%%%%%%%
The aim of this section is to prove the blow-up criteria stated in Theorem \ref{theorem2} and Theorem \ref{theorem3} with initial data possessing different regularity.

\begin{proof}[Proof of Theorem \ref{theorem2}] Applying the nonhomogeneous dyadic blocks $\Delta_q$ to the first component with respect to $m$ in \eqref{P2NSQQ}, we get
\begin{equation}\label{6.1}
\begin{split}
\partial_t \Delta_q m+ \rho \partial_x \Delta_q m=[\rho,\Delta_q]\partial_x m-\Delta_q[m\left(\psi(t,x)-\overline{\psi}(t)\right)].
\end{split}
\end{equation}
Multiplying  \eqref{6.1} by $2\Delta_q m$ and integrating in space, we have
\begin{equation*}
\begin{split}
 \frac{d}{dt}\int_\mathbb{T}|\Delta_q m|^2dx =&\int_\mathbb{T} \partial_x\rho (\Delta_q m)^2 dx+ 2\int_\mathbb{T}\Delta_q m [\rho,\Delta_q]\partial_x m dx\\
 &-2\int_\mathbb{T}\Delta_q m \Delta_q[m\left(\psi(t,x)-\overline{\psi}(t)\right)] dx\\
\leq  &  \|\partial_x\rho\|_{L^\infty}\|\Delta_q m\|_{L^2}^2+ 2\|\Delta_q m\|_{L^2}\|[\rho\partial_x,\Delta_q] m\|_{L^2}\\
 &+2\|\Delta_q m\|_{L^2} \|\Delta_q[m\left(\psi(t,x)-\overline{\psi}(t)\right)]\|_{L^2},
\end{split}
\end{equation*}
which implies that
\begin{equation}\label{6.2}
\begin{split}
 \frac{d}{dt}\|\Delta_q m\|_{L^2}\leq  \frac{1}{2} \|\partial_x\rho\|_{L^\infty}\|\Delta_q m\|_{L^2}+  \|[\rho\partial_x,\Delta_q] m\|_{L^2}+ \|\Delta_q[m\left(\psi(t,x)-\overline{\psi}(t)\right)]\|_{L^2}.
\end{split}
\end{equation}
Multiplying \eqref{6.2} by $2^{q/2}$ and taking the $l^1$-norm for $q\in \mathbb{Z}$, we obtain
\begin{equation}\label{6.3}
\begin{split}
 \| m(t)\|_{B_{2,1}^{1/2}}\leq&  \| m_0\|_{B_{2,1}^{1/2}}+\frac{1}{2} \int_0^t\|\partial_x\rho\|_{L^\infty}\| m\|_{B_{2,1}^{1/2}}dt'+   \int_0^t\sum_{q\geq -1}2^{q/2}\|[\rho\partial_x,\Delta_q] m\|_{L^2}dt'\\
 &+  \int_0^t\|m\left(\psi(t,x)-\overline{\psi}(t)\right)\|_{B_{2,1}^{1/2}}dt'.
\end{split}
\end{equation}
In view of the definition of $\rho(t,x)$, we have
\begin{equation}\label{6.4}
\begin{split}
\|\partial_x\rho(t)\|_{L^\infty} &= \| \psi(t,\cdot)-\overline{\psi}(t)\|_{L^\infty}\\
&\leq C\|(\alpha+\gamma)(v+v_x)m-\alpha(u-u_x)n\|_{L^\infty} \leq C(|\alpha|+|\gamma| )\|m\|_{L^\infty}\|n\|_{L^\infty}.
\end{split}
\end{equation}
To estimate the third term on the right hand side of \eqref{6.3}, we need the following lemma.
\begin{lemma} [\cite{64}] \label{commutator}
Let $0<\sigma<1 $, $1\leq  r\leq\infty$, $1\leq p \leq p_1 \leq\infty$. If $v$ be a vector field over $\mathbb{R}^d$, then there exists a constant $C$ such that
$$
\|(2^{j\sigma}\|[v\cdot \nabla,\Delta_j]f\|_{L^p})_j\|_{l^r}\leq C\|\nabla v\|_{L^\infty}\|f\|_{B_{p,r}^\sigma}.
$$
\end{lemma}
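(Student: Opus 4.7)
\medskip\noindent\textbf{Proof proposal.} The plan is to establish the commutator estimate through Bony's paraproduct machinery, which is the classical route for this type of inequality. First, I would write the dyadic block as a convolution $\Delta_j g = h_j \ast g$ with $h_j(y)=2^{jd}h(2^j y)$, $h=\mathcal{F}^{-1}\varphi$, and decompose the product using Bony's formula
$$v\cdot\nabla f \;=\; T_v\nabla f \;+\; T_{\nabla f}v \;+\; R(v,\nabla f).$$
Applying the same decomposition to $v\cdot\nabla(\Delta_j f)$ and subtracting, the commutator $[v\cdot\nabla,\Delta_j]f$ splits into three pieces: the paraproduct commutator $[T_v,\Delta_j]\nabla f$; the mixed piece $T_{\nabla \Delta_j f}v - \Delta_j T_{\nabla f}v$; and the remainder piece $R(v,\nabla\Delta_j f)-\Delta_j R(v,\nabla f)$. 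Each piece is then estimated separately in $L^p$ with its $2^{j\sigma}$-prefactor, and the three contributions are combined in $\ell^r$.

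The heart of the argument is the paraproduct commutator. Since $S_{j'-1}v\cdot\Delta_{j'}\nabla f$ is spectrally supported in an annulus of size $\sim 2^{j'}$, only indices $|j'-j|\leq N_0$ survive after applying $\Delta_j$, so
$$[T_v,\Delta_j]\nabla f \;=\; \sum_{|j'-j|\leq N_0}[S_{j'-1}v,\Delta_j]\Delta_{j'}\nabla f,$$
and each term admits the integral representation
$$[S_{j'-1}v,\Delta_j]g(x) \;=\; \int h_j(x-y)\bigl(S_{j'-1}v(x)-S_{j'-1}v(y)\bigr)g(y)\,dy.$$
Combining the Lipschitz estimate $|S_{j'-1}v(x)-S_{j'-1}v(y)|\le C|x-y|\|\nabla v\|_{L^\infty}$ with the first-moment bound $\int|y|\,|h_j(y)|\,dy \le C 2^{-j}$, Minkowski's inequality together with the Bernstein inequality $\|\Delta_{j'}\nabla f\|_{L^p}\le C 2^{j'}\|\Delta_{j'}f\|_{L^p}$ yields
$$\bigl\|[S_{j'-1}v,\Delta_j]\Delta_{j'}\nabla f\bigr\|_{L^p} \;\le\; C\,2^{\,j'-j}\|\nabla v\|_{L^\infty}\|\Delta_{j'}f\|_{L^p}.$$
Summing over $|j'-j|\le N_0$, multiplying by $2^{j\sigma}$, and taking the $\ell^r$-norm reproduces $C\|\nabla v\|_{L^\infty}\|f\|_{B^\sigma_{p,r}}$ for this contribution.

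For the mixed and remainder pieces I would invoke the standard Bony paraproduct/remainder continuity estimates in Besov spaces, combined with the low-frequency Bernstein bound $\|\Delta_{j'}v\|_{L^\infty}\le C 2^{-j'}\|\nabla v\|_{L^\infty}$ for $j'\ge 0$. The spectral localization of $\nabla\Delta_j f$ around $2^j$ confines the nontrivial contributions of $T_{\nabla \Delta_j f}v-\Delta_j T_{\nabla f}v$ to dyadic blocks of comparable scale, while for the remainder I would expand $R(v,\nabla f)=\sum_{|j''-j'''|\le 1}\Delta_{j''}v\cdot\Delta_{j'''}\nabla f$ and regroup over scales $j'\ge j-N_0$. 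A Young-type convolution inequality in $\ell^r$ would then absorb each piece into $C\|\nabla v\|_{L^\infty}\|f\|_{B^\sigma_{p,r}}$.

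The main obstacle is the remainder contribution: because $R(v,\nabla f)$ has its dyadic blocks spectrally supported in balls rather than annuli, after applying $\Delta_j$ the surviving sum runs over all $j'\ge j-N_0$, producing a series of the form $\sum_{j'\ge j}2^{(j-j')\sigma}$ that converges only for $\sigma>0$; dually, the paraproduct-side summation is geometric with ratio $2^{\sigma-1}$, requiring $\sigma<1$. These two summability thresholds are precisely what forces the hypothesis $0<\sigma<1$; once they are in place, the remaining computations reduce to routine $\ell^r$-convolution estimates, and the three contributions combine to give the announced bound.
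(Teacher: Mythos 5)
The paper offers no proof of this lemma: it is quoted (in simplified form) from the reference [64], which is also why the hypothesis $1\le p\le p_1\le\infty$ survives even though $p_1$ never appears in the conclusion. Your sketch reconstructs the standard argument from that source --- Bony decomposition of the commutator, the kernel representation and first-moment bound $\int|y|\,|h_j(y)|\,dy\le C2^{-j}$ for the paraproduct commutator, Bernstein's inequality for the remaining pieces --- and you correctly locate where each half of the hypothesis $0<\sigma<1$ is used ($\sigma<1$ on the $T_{\nabla f}v$ side, $\sigma>0$ on the remainder side). The diagonal reduction $|j'-j|\le N_0$ and the resulting bound $C\,2^{j'-j}\|\nabla v\|_{L^\infty}\|\Delta_{j'}f\|_{L^p}$ for the paraproduct commutator are exactly right.

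There is one step that fails as written: the pieces of the Bony decomposition containing the block $\Delta_{-1}v$ cannot be ``estimated separately,'' because $\|\Delta_{-1}v\|_{L^\infty}$ is not controlled by $\|\nabla v\|_{L^\infty}$ (take $v$ a large constant: the full commutator vanishes, but $R(v,\nabla f)$ and $\Delta_jR(v,\nabla f)$ individually do not). Your low-frequency Bernstein bound is explicitly restricted to $j'\ge 0$, so the remainder terms $\Delta_{-1}v\,\Delta_{j'''}\nabla f$ are left without any estimate, and no crude bound can supply one; the cancellation inherent in the commutator must be used for this block as well. The standard repair is to split $v=\Delta_{-1}v+(\mathrm{Id}-\Delta_{-1})v$ at the outset: the commutator $[\Delta_{-1}v\cdot\nabla,\Delta_j]\Delta_{j'}f$ is then treated exactly like your paraproduct commutator, via the same integral representation and first-moment bound (only $\|\nabla\Delta_{-1}v\|_{L^\infty}\le C\|\nabla v\|_{L^\infty}$ enters, and only $|j'-j|\le N_0$ survives), while every dyadic block of $(\mathrm{Id}-\Delta_{-1})v$ does enjoy the gain $2^{-j'}\|\nabla v\|_{L^\infty}$, so the rest of your argument goes through unchanged. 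With that adjustment the proposal is a correct proof of the quoted estimate.
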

It follows from Lemma \ref{commutator} with $\sigma=1/2$ that
\begin{equation}\label{6.5}
\begin{split}
\int_0^t\sum_{q\geq -1}2^{q/2}\|[\rho\partial_x,\Delta_q] m\|_{L^2}dt'&\leq  C\int_0^t\|\partial_x\rho\|_{L^\infty}\|m\|_{B_{2,1}^{1/2}}dt'\\
&\leq  C\int_0^t(|\alpha|+|\gamma| )\|m\|_{L^\infty}\|n\|_{L^\infty}\|m\|_{B_{2,1}^{1/2}}dt'.
\end{split}
\end{equation}
Using the Moser-type estimate $\|fg\|_{B_{p,r}^s}\leq C (\|f\|_{B_{p,r}^s}\|g\|_{L^\infty}+\|f\|_{L^\infty}\|g\|_{B_{p,r}^s})$ for any $s>0$, we have
\begin{equation}\label{6.6}
\begin{split}
&\int_0^t\|m\left(\psi(t,x)-\overline{\psi}(t)\right)\|_{B_{2,1}^{1/2}}dt'\\
&\quad \leq C\int_0^t\Big(\|m\|_{B_{2,1}^{1/2}}\|\psi(t,x)-\overline{\psi}(t)\|_{L^\infty}
+\|m\|_{L^\infty}\|\psi(t,x)-\overline{\psi}(t)\|_{B_{2,1}^{1/2}}\Big)dt'\\
&\quad \leq C\int_0^t(|\alpha|+|\gamma|)\Big[\|m\|_{B_{2,1}^{1/2}}\|m\|_{L^\infty}\|n\|_{L^\infty}
+\|m\|_{L^\infty}\Big(\|v+v_x\|_{L^\infty}\|m\|_{B_{2,1}^{1/2}}\\
&\qquad +\|m\|_{L^\infty}\|v+v_x\|_{B_{2,1}^{1/2}}+\|u-u_x\|_{L^\infty}\|n\|_{B_{2,1}^{1/2}}
+\|n\|_{L^\infty}\|u-u_x\|_{B_{2,1}^{1/2}}\Big)\Big]dt'\\
&\quad \leq C\int_0^t(|\alpha|+|\gamma|)\Big( \|m\|_{L^\infty}\|n\|_{L^\infty}\|m\|_{B_{2,1}^{1/2}} +\|m\|_{L^\infty}^2\|n\|_{B_{2,1}^{1/2}}\Big)dt'.
\end{split}
\end{equation}
Putting the estimates \eqref{6.4}-\eqref{6.6} into \eqref{6.3}, we get
\begin{equation*}
\begin{split}
 \| m(t)\|_{B_{2,1}^{1/2}}\leq&  \| m_0\|_{B_{2,1}^{1/2}} +  C\int_0^t(|\alpha|+|\gamma|)\Big( \|m\|_{L^\infty}\|n\|_{L^\infty}\|m\|_{B_{2,1}^{1/2}} +\|m\|_{L^\infty}^2\|n\|_{B_{2,1}^{1/2}} \Big)dt'.
\end{split}
\end{equation*}
Similar to the last estimate, one can also derive that
\begin{equation*}
\begin{split}
 \| n(t)\|_{B_{2,1}^{1/2}}\leq&  \| n_0\|_{B_{2,1}^{1/2}} +  C\int_0^t(|\alpha|+|\gamma|)\Big( \|m\|_{L^\infty}\|n\|_{L^\infty}\|n\|_{B_{2,1}^{1/2}} +\|n\|_{L^\infty}^2\|m\|_{B_{2,1}^{1/2}} \Big)dt'.
\end{split}
\end{equation*}
As a result, we get from the last two inequality and the Sobolev embedding $\dot{B}_{\infty,1}^0\hookrightarrow L^\infty$ that
\begin{equation}\label{6.7}
\begin{split}
 &\| m(t)\|_{B_{2,1}^{1/2}}+ \| n(t)\|_{B_{2,1}^{1/2}}\leq   \|m_0\|_{B_{2,1}^{1/2}}+\| n_0\|_{B_{2,1}^{1/2}}\\
 &\qquad +  C\int_0^t (|\alpha|+|\gamma|)\Big(\|m\|_{\dot{B}_{\infty,1}^0}^2+\|n\|_{\dot{B}_{\infty,1}^0}^2\Big)\Big(\| m(t)\|_{B_{2,1}^{1/2}}+ \| n(t)\|_{B_{2,1}^{1/2}}\Big)dt'.
\end{split}
\end{equation}
An application of the Gronwall's inequality to \eqref{6.7} leads to
\begin{equation}\label{6.8}
\begin{split}
 \| m(t)\|_{B_{2,1}^{1/2}}+ \| n(t)\|_{B_{2,1}^{1/2}}\leq \Big(\|m_0\|_{B_{2,1}^{1/2}}+\| n_0\|_{B_{2,1}^{1/2}}\Big) \exp{\left\{C\int_0^t (|\alpha|+|\gamma|)(\|m\|_{\dot{B}_{\infty,1}^0}^2+\|n\|_{\dot{B}_{\infty,1}^0}^2)dt'\right\}}.
\end{split}
\end{equation}
Assume that there exists a constant $C>0$ such that
$$
\int_0^{T^*} (|\alpha|+|\gamma|)\Big(\|m\|_{\dot{B}_{\infty,1}^0}^2+\|n\|_{\dot{B}_{\infty,1}^0}^2\Big) dt'\leq C.
$$
It follows from \eqref{6.8} that
\begin{equation}\label{6.9}
\begin{split}
 \limsup_{t\rightarrow T^*}\Big(\| m(t)\|_{B_{2,1}^{1/2}}+ \| n(t)\|_{B_{2,1}^{1/2}}\Big) \leq C\Big(\|m_0\|_{B_{2,1}^{1/2}}+\| n_0\|_{B_{2,1}^{1/2}}\Big).
\end{split}
\end{equation}
According to Theorem \ref{theorem1}, given $(m(T^*,x),n(T^*,x))$, there exists a $\delta>0$ such that the solution $(m,n)$ can be extended to the interval $[T^*,T^*+\delta]$, which contradicts to the fact that $T^*$ is the maximum existence time.

Now let us derive the lower bound for the lifespan $T^*$. Using the embedding $B_{2,1}^{1/2}\hookrightarrow L^\infty$, it follows from \eqref{6.7} that
\begin{equation}\label{6.10}
\begin{split}
 &\| m(t)\|_{B_{2,1}^{1/2}}+ \| n(t)\|_{B_{2,1}^{1/2}}\\
 &\quad \leq   \|m_0\|_{B_{2,1}^{1/2}}+\| n_0\|_{B_{2,1}^{1/2}} +  C\int_0^t (|\alpha|+|\gamma|)\Big(\| m(t')\|_{B_{2,1}^{1/2}}+ \| n(t')\|_{B_{2,1}^{1/2}}\Big)^3dt'.
\end{split}
\end{equation}
Solving the inequality \eqref{6.10} leads to
\begin{equation}\label{6.11}
\begin{split}
  \| m(t)\|_{B_{2,1}^{1/2}}+ \| n(t)\|_{B_{2,1}^{1/2}} \leq \frac{\| m_0\|_{B_{2,1}^{1/2}}+ \| n_0\|_{B_{2,1}^{1/2}}}{\left[1-C\Big(\| m_0\|_{B_{2,1}^{1/2}}+ \| n_0\|_{B_{2,1}^{1/2}}\Big)^2\int_0^t(|\alpha|+|\gamma|)dt'\right]^{1/2}}.
\end{split}
\end{equation}
By the assumption of $\alpha,\gamma\in L^1_{loc}(0,\infty;\mathbb{R})$, the indefinite integral on any finite $[0,t]$ is absolutely continuous, hence one can find a $T(m_0,n_0)$ defined in Theorem \ref{theorem2} such that the blow-up time satisfies $ T^*\geq T(m_0,n_0)$.

The proof of Theorem \ref{theorem2} is completed.
\end{proof}

\begin{proof} [Proof of Theorem \ref{theorem3}]
 We first recall the Brezis-Gallouet-Wainger type estimate (cf. \cite{kozono2002critical})
\begin{equation}\label{6.12}
\begin{split}
  \|f\|_{L^\infty}\leq C\left(1+\|f\|_{\dot{B}_{p,\rho}^{1/p}} \left(\log(e+\|f\|_{B_{q,\sigma}^s})\right)^{1-1/\rho}\right),\quad \forall f \in \dot{B}_{p,\rho}^{1/p}\cap B_{q,\sigma}^s,
\end{split}
\end{equation}
where $q\in[1,\infty)$, $p,\rho,\sigma\in[1,\infty]$ and $s>1/q$.

Assume that $(m_0,n_0)\in X_{1/2+\epsilon,r}$, for any $\epsilon\in (0,1/2)$ and $r\in [1,\infty]$, Theorem \ref{theorem2} in \cite{zhang2020periodic} implies that the Cauchy problem \eqref{P2NSQQ}
admits an unique solution $(m,n)\in C([0,T];X_{1/2+\epsilon,r})$ for some time $T>0$.

Multiplying both sides of \eqref{6.2} by $2^{qr/2}$ and then taking the $l^r$-norm for $q\in \mathbb{Z}$, we obtain
\begin{equation}\label{6.13}
\begin{split}
 \frac{d}{dt}\| m(t)\|_{B_{2,r}^{1/2+\epsilon}}\leq&  \frac{1}{2} \|\partial_x\rho\|_{L^\infty}\| m\|_{B_{2,r}^{1/2+\epsilon}}+   \left(\sum_{q\geq -1}2^{qr(1/2+\epsilon) }\|[\rho\partial_x,\Delta_q] m\|_{L^2}^r\right)^{1/r}\\
 &+ \|m\left(\psi(t,x)-\overline{\psi}(t)\right)\|_{B_{2,r}^{1/2+\epsilon}}.
\end{split}
\end{equation}
By using the commutator estimate in Lemma \ref{commutator}, we have
\begin{equation*}
\begin{split}
 \left(\sum_{q\geq -1}2^{qr(1/2+\epsilon)}\|[\rho\partial_x,\Delta_q] m\|_{L^2}^r\right)^{1/r} \leq C(|\alpha|+|\gamma| )\|m\|_{L^\infty}\|n\|_{L^\infty}\|m\|_{B_{2,1}^{1/2+\epsilon}}.
\end{split}
\end{equation*}
For the third term on the right hand side of \eqref{6.14}, we have
\begin{equation*}
\begin{split}
&\|m\left(\psi(t,x)-\overline{\psi}(t)\right)\|_{B_{2,1}^{1/2+\epsilon}}\leq C (|\alpha|+|\gamma|) \Big( \|m\|_{L^\infty}\|n\|_{L^\infty}\|m\|_{B_{2,1}^{1/2+\epsilon}} +\|m\|_{L^\infty}^2\|n\|_{B_{2,1}^{1/2+\epsilon}} \Big).
\end{split}
\end{equation*}
Putting the last two estimates into \eqref{6.13} leads to
\begin{equation*}
\begin{split}
 \frac{d}{dt}\| m(t)\|_{B_{2,r}^{1/2+\epsilon}}\leq  C (|\alpha|+|\gamma|) \Big( \|m\|_{L^\infty}\|n\|_{L^\infty}\|m\|_{B_{2,1}^{1/2+\epsilon}} +\|m\|_{L^\infty}^2\|n\|_{B_{2,1}^{1/2+\epsilon}}\Big).
\end{split}
\end{equation*}
Applying the similar argument to the equation with respect to the solution $n$, we have
\begin{equation*}
\begin{split}
 \frac{d}{dt}\| n(t)\|_{B_{2,r}^{1/2+\epsilon}}\leq  C (|\alpha|+|\gamma|) \Big( \|m\|_{L^\infty}\|n\|_{L^\infty}\|n\|_{B_{2,1}^{1/2+\epsilon}} +\|n\|_{L^\infty}^2\|m\|_{B_{2,1}^{1/2+\epsilon}}\Big).
\end{split}
\end{equation*}
It then follows that
\begin{equation}\label{6.14}
\begin{split}
 &\frac{d}{dt}\Big(\| m(t)\|_{B_{2,r}^{1/2+\epsilon}}+\| n(t)\|_{B_{2,r}^{1/2+\epsilon}}\Big)\\
 &\quad \leq  C (|\alpha|+|\gamma|) \left(\|m\|_{L^\infty}^2+\|n\|_{L^\infty}^2\right)\Big(\| m(t)\|_{B_{2,r}^{1/2+\epsilon}}+\| n(t)\|_{B_{2,r}^{1/2+\epsilon}}\Big)  .
\end{split}
\end{equation}
In terms of the logarithmic Sobolev inequality in Lemma \ref{logestimate}, we get
\begin{equation}\label{6.15}
\begin{split}
  \|m\|_{L^\infty}^2+  \|n\|_{L^\infty}^2&\leq C \Big(1+\|m\|_{\dot{B}_{\infty,2}^{0}}^2+\|n\|_{\dot{B}_{\infty,2}^{0}}^2\Big) \Big(\log(e+\|m\|_{B_{2,r}^{1/2+\epsilon}})+\log(e+\|m\|_{B_{2,r}^{1/2+\epsilon}})\Big) \\
  &\leq C \Big(1+\|m\|_{\dot{B}_{\infty,2}^{0}}^2+\|n\|_{\dot{B}_{\infty,2}^{0}}^2\Big) \log\Big(2e^2+\|m\|_{B_{2,r}^{1/2+\epsilon}}^2+\|n\|_{B_{2,r}^{1/2+\epsilon}}^2\Big).
\end{split}
\end{equation}
Inserting the estimate \eqref{6.15} into \eqref{6.14} leads to
\begin{equation}\label{6.16}
\begin{split}
 &\frac{d}{dt}\Big(\| m(t)\|_{B_{2,r}^{1/2+\epsilon}}+\| n(t)\|_{B_{2,r}^{1/2+\epsilon}}\Big)\\
 &\quad \leq  C (|\alpha|+|\gamma|) \Big(1+\|m\|_{\dot{B}_{\infty,2}^{0}}^2+\|n\|_{\dot{B}_{\infty,2}^{0}}^2\Big) \log\Big(2e^2+\|m\|_{B_{2,r}^{1/2+\epsilon}}^2+\|n\|_{B_{2,r}^{1/2+\epsilon}}^2\Big)\\
 &\quad\quad\times \Big(\| m(t)\|_{B_{2,r}^{1/2+\epsilon}}+\| n(t)\|_{B_{2,r}^{1/2+\epsilon}}\Big)  .
\end{split}
\end{equation}
Multiplying both sides of \eqref{6.16} by $\| m(t)\|_{B_{2,r}^{1/2+\epsilon}}+\| n(t)\|_{B_{2,r}^{1/2+\epsilon}}$, and using the fact of $(a+b)^2\approx a^2+b^2$, the inequality \eqref{6.16} is equivalent to
\begin{equation}\label{6.17}
\begin{split}
 &\frac{d}{dt}\Big(2e^2+\| m(t)\|_{B_{2,r}^{1/2+\epsilon}}^2+\| n(t)\|_{B_{2,r}^{1/2+\epsilon}}^2\Big)\\
 &\quad \leq  C (|\alpha|+|\gamma|) \Big(1+\|m\|_{\dot{B}_{\infty,2}^{0}}^2+\|n\|_{\dot{B}_{\infty,2}^{0}}^2\Big) \log\Big(2e^2+\|m\|_{B_{2,r}^{1/2+\epsilon}}^2+\|n\|_{B_{2,r}^{1/2+\epsilon}}^2\Big)\\
 &\quad\quad\times \Big(2e^2+\| m(t)\|_{B_{2,r}^{1/2+\epsilon}}^2+\| n(t)\|_{B_{2,r}^{1/2+\epsilon}}^2\Big)  .
\end{split}
\end{equation}
Solving the inequality \eqref{6.17} leads to
\begin{equation*}
\begin{split}
& \log\Big(2e^2+\|m\|_{B_{2,r}^{1/2+\epsilon}}^2+\|n\|_{B_{2,r}^{1/2+\epsilon}}^2\Big)\\
 &  \quad\leq \log\Big(2e^2+\|m_0\|_{B_{2,r}^{1/2+\epsilon}}^2+\|n_0\|_{B_{2,r}^{1/2+\epsilon}}^2\Big)\exp{\left\{C\int_0^t (|\alpha|+|\gamma|) (1+\|m\|_{\dot{B}_{\infty,2}^{0}}^2+\|n\|_{\dot{B}_{\infty,2}^{0}}^2) dt'\right\}}.
\end{split}
\end{equation*}
With the above estimate at hand, by using the similar argument  as that in the proof of Theorem \ref{theorem2}, one can obtain the blow-up criteria presented in Theorem \ref{theorem3}.

To derive the lower bound for the blow-up time $T^*$, we shall use a method which is different from \eqref{6.10}. More specifically, by applying the logarithmic Sobolev inequality \eqref{6.15}, we have
\begin{eqnarray}\label{6.18}
\begin{split}
  \|m\|_{L^\infty}^2+ \|n\|_{L^\infty}^2&\leq C\bigg[\Big(1+\|m\|_{\dot{B}_{\infty,\infty}^{0}} \log(e+\|m\|_{B_{2,r}^{1/2+\epsilon}}) \Big)^2+\Big(1+\|n\|_{\dot{B}_{\infty,\infty}^{0}} \log(e+\|n\|_{B_{2,r}^{1/2+\epsilon}}) \Big)^2\bigg]\\
  &\leq C\Big(1+\|m\|_{\dot{B}_{\infty,\infty}^{0}}^2 +\|n\|_{\dot{B}_{\infty,\infty}^{0}}^2\Big)\Big(\log(2e^2+\|m\|_{B_{2,r}^{1/2+\epsilon}}^2+\|n\|_{B_{2,r}^{1/2+\epsilon}}^2)\Big)^2.
\end{split}
\end{eqnarray}

Note that the existence of the quadratic function $\log^2\Big(2e^2+\|m\|_{B_{2,r}^{1/2+\epsilon}}^2+\|n\|_{B_{2,r}^{1/2+\epsilon}}^2\Big)$ in \eqref{6.18} makes the combination of estimates \eqref{6.14} and \eqref{6.18} insufficient for the establishment of the blow-up criteria in space $\dot{B}_{\infty,\infty}^{0}$. Further, in terms of the Sobolev embedding $B_{2,r}^{1/2+\epsilon}\hookrightarrow B_{\infty,\infty}^{0}\hookrightarrow\dot{B}_{\infty,\infty}^{0}$,  we get from \eqref{6.18} that
\begin{equation}\label{6.188}
\begin{split}
  \|m\|_{L^\infty}^2+ \|n\|_{L^\infty}^2 \leq C\Big(1+\|m\|_{B_{2,r}^{1/2+\epsilon}}^2 +\|n\|_{B_{2,r}^{1/2+\epsilon}}^2\Big)\Big(\log(2e^2+\|m\|_{B_{2,r}^{1/2+\epsilon}}^2+\|n\|_{B_{2,r}^{1/2+\epsilon}}^2)\Big)^2.
\end{split}
\end{equation}
Combining the estimate \eqref{6.14} and \eqref{6.188}  and using the inequality $\log(2e^2+x)\leq 2e^2+x$ for all $x\geq 0$,
we have
\begin{equation}\label{6.19}
\begin{split}
 &\frac{d}{dt}\Big(\| m(t)\|_{B_{2,r}^{1/2+\epsilon}}+\| n(t)\|_{B_{2,r}^{1/2+\epsilon}}\Big)\\
 &\quad \leq  C (|\alpha|+|\gamma|) \Big(1+\|m\|_{B_{2,r}^{1/2+\epsilon}}^2 +\|n\|_{B_{2,r}^{1/2+\epsilon}}^2\Big) \Big(\log(2e^2+\|m\|_{B_{2,r}^{1/2+\epsilon}}^2+\|n\|_{B_{2,r}^{1/2+\epsilon}}^2)\Big)^2\\
 &\quad\quad\times\Big(\| m(t)\|_{B_{2,r}^{1/2+\epsilon}}+\| n(t)\|_{B_{2,r}^{1/2+\epsilon}}\Big) \\
 &\quad \leq  C (|\alpha|+|\gamma|) \Big(2e^2+\|m\|_{B_{2,r}^{1/2+\epsilon}}^2+\|n\|_{B_{2,r}^{1/2+\epsilon}}^2\Big)^3\Big(\| m(t)\|_{B_{2,r}^{1/2+\epsilon}}+\| n(t)\|_{B_{2,r}^{1/2+\epsilon}}\Big)\\
 &\quad \leq  C (|\alpha|+|\gamma|) \Big(\sqrt{2}e+\|m\|_{B_{2,r}^{1/2+\epsilon}}+\|n\|_{B_{2,r}^{1/2+\epsilon}}\Big)^7 .
\end{split}
\end{equation}
Solving the inequality \eqref{6.19} leads to
\begin{equation*}
\begin{split}
&\sqrt{2}e+\| m(t)\|_{B_{2,r}^{1/2+\epsilon}}+\| n(t)\|_{B_{2,r}^{1/2+\epsilon}} \\
 &\quad \leq \frac{\sqrt{2}e+\| m_0\|_{B_{2,r}^{1/2+\epsilon}}+\| n_0\|_{B_{2,r}^{1/2+\epsilon}} }{\left[1-C\int_0^t(|\alpha|+|\gamma|)dt'\Big(\sqrt{2}e+\| m_0\|_{B_{2,r}^{1/2+\epsilon}}+\| n_0\|_{B_{2,r}^{1/2+\epsilon}} \Big)^6\right]^{1/6}}.
\end{split}
\end{equation*}
Due to the condition of $\alpha,\gamma\in L^1_{loc}(0,\infty;\mathbb{R})$, the indefinite integral on any finite interval $[0,t]$ is absolutely continuous, so there is a constant $T'(m_0,n_0)>0$ defined in Theorem \ref{theorem3} such that the lifespan $T^*$ of the solution $(m,n)$ satisfies $T^*\geq T'(m_0,n_0)$.

This completes the proof of Theorem \ref{theorem3}.
\end{proof}

\noindent
\textbf{Acknowledgements.}  The authors thank the National Natural Science Foundation of China (Project \# 11701198, \# 11971185 and \# 11971475), the Fundamental Research Funds for the Central Universities  (Project \# 5003011025),  and the 2019 - 2020 Hunan overseas distinguished professorship (Project \# 2019014) for their partial support. The author Qiao also thanks the UT President's Endowed Professorship (Project \# 450000123) for its partial support.

%\begin{thebibliography}{11}
\bibliographystyle{plain}%
\bibliography{zhanglei2020}

\end{document}